\newtheorem{thm}{Theorem}[section]
\newtheorem{cor}[thm]{Corollary}
\newtheorem{lem}[thm]{Lemma}
\newtheorem{prop}[thm]{Proposition}
\theoremstyle{definition}
\theoremstyle{remark}
\newtheorem{rem}[thm]{Remark}
\numberwithin{equation}{section}
\newcommand{\F}{\mathcal{F}}
\newcommand{\Ff}{\mathbb{F}}
\newcommand{\s}{\mathcal{S}}
\newcommand{\Z}{\mathbb{Z}}
\newcommand{\Hh}{\mathcal{H}}
\newcommand{\Tr}{\mbox{Tr}}
\newcommand{\supp}{\mbox{supp}}
\newcommand{\Pp}{\mathbb{P}}
\newcommand{\rad}{\mbox{rad}}
\newcommand{\Res}{\mbox{Res}}
\begin{document}

\title[Traces of Frobenius of Cyclic Covers]{Lower Order Terms for Expected Value of Traces of Frobenius of a Family of Cyclic Covers of $\Pp^1_{\Ff_q}$ and One-Level Densities}%
\author{Patrick Meisner}%
\address{Department of Mathematics \\ KTH Royal Institute of Technology}%
\email{pfmeisner@gmail.com}%
\thanks{The author was supported by the Verg foundation.}
%\thanks{}%
%\subjclass{}%
%\keywords{}%

%\date{}%
%\dedicatory{}%
%\commby{}%
% ----------------------------------------------------------------
\begin{abstract}

We consider the expected value of $\Tr(\Theta_C^n)$ where $C$ runs over a thin family of $r$-cyclic covers of $\Pp^1_{\Ff_q}$ for any $r$. We obtain many lower order terms dependent on the divisors of $r$. We use these results to calculate the one-level density of the family and hypothesize a refined one-level density result.

\end{abstract}
\maketitle
% ----------------------------------------------------------------
\section{Introduction}\label{intro}

Fix a prime $p$ and let $q=p^a$. Then for any curve, $C$, defined over $\Ff_q$, we can define the zeta function attached to the curve as
\begin{align}\label{zetafunc}
Z_C(u) = \exp\left( \sum_{n=1}^\infty \#C(\Ff_{q^n})\frac{u^n}{n}  \right).
\end{align}
The Riemann hypothesis (proved by Weil \cite{W}) states that
\begin{align}\label{Lfunc}
Z_C(u) = \frac{L_C(u)}{(1-u)(1-qu)}
\end{align}
where $L_C(u)$ is a polynomial of degree $2g$ (where $g$ is the genus of the curve), all of whose zeros lie on the ``half-line" $|u|=q^{-1/2}$. Moreover, the distribution of these zeroes give us arithmetic information about the number of points on the curve. Therefore, it is of interest to study statistical properties of these zeroes.

One such statistical object that is of interest is the one-level density of the zeroes. Specifically, for any Schwartz test function, $f$, define
\begin{align}\label{OLD1}
\mathcal{D}(L_C,f) = \sum_{\theta} f\left(2g \frac{\theta}{2\pi}\right)
\end{align}
where the sum is over all real numbers $\theta$ such that $q^{-1/2} e^{i\theta}$ are the zeros of $L_C(u)$, counted with multiplicity. Note that by the cyclic nature of $e^{i\theta}$ if $\theta$ appears in the sum then so does $\theta+2\pi n$ for all $n\in\mathbb{Z}$. Since $f$ is Schwartz, it's mass is concentrated near $0$, and so $\mathcal{D}(L_C,f)$ can be viewed as a measure on the zeroes near the real line, the so-called ``low lying zeroes".

Since $L_C(u)$ is a polynomial of degree $2g$ whose zeroes lie on $|u|=q^{-1/2}$, we get that we can find a unitary $2g\times2g$ matrix $\Theta_C$, called the Frobenius of the curve $C$, such that
\begin{align}\label{Frob}
L_C(u) = \det(1-\sqrt{q} u\Theta_C).
\end{align}
%In fact, using the functional equation of $L_C(u)$, one can actually show that $\Theta_C$ is a symplectic matrix.

Therefore, if for any unitary $2g\times2g$ matrix $U$ and even Schwartz function, $f$, we define
$$\mathcal{D}(U,f) = \sum_{j=1}^{2g} \sum_{n\in\Z} f\left(2g\left(\frac{\theta_j}{2\pi} -n\right)\right) $$
where the $\theta_j$ are the eigenvalues of $U$ then we get
\begin{align}\label{OLD2}
\mathcal{D}(L_C,f)= \mathcal{D}(\Theta_C,f)
\end{align}

Katz and Sarnak \cite{KS} predicted that for a nice family of curves $\Hh$ then there should be a matrix group $M$, called the monodromy group, such that
$$\langle \mathcal{D}(L_C,f)\rangle_{\Hh}\footnote{Here we use the notation for any set $S$ and any function $\phi$ on $S$ that $\langle\phi(s)\rangle_S =\frac{1}{|S|}\sum_{s\in S} \phi(s)$ } \sim \int_M \mathcal{D}(U,f) dU.$$
They also predicted that $M$ should be one of $USp$, the unitary symplectics; $O$, the orthogonals; $SO(even)$, the even special orthogonals; $SO(odd)$, the odd special orthogonal; or $U$, the unitaries.

These predictions have been confirmed for multiple families. Classically, over number fields Katz and Sarnak \cite{KS} showed that the family of $L$-functions attached to quadratic characters have symplectic monodromy group. In \cite{M2}, I showed that the cubic characters have unitary monodromy group whereas Cho and Park \cite{CP} proved a more general result for this family.  Shankar, S\"{o}dergren and Templier \cite{SST} showed that $L$-functions attached to $S_3,S_4$ and $S_5$ extensions of $\mathbb{Q}$ have symplectic monodromy group while Yang \cite{Y} showed that the $S_3$ and $S_4$  extensions of $\Ff_q[T]$ also have symplectic monodromy group.

Using Poisson summation one can show that for any $U$, we get
\begin{align}\label{D(u,f)}
\mathcal{D}(U,f) = \frac{1}{2g}\sum_{n\in\mathbb{Z}} \hat{f}\left(\frac{n}{2g}\right) \Tr(U^n).
\end{align}
%Further, if we suppose $U$ is symplectic and $f$ is an even function, then we can use the fact that $U^{-1} =-U$ for symplectic matrices to simplify this down to
%$$\mathcal{D}(U,f) = \hat{f}(0) + \frac{1}{g}\sum_{n=1}^{\infty} \hat{f}\left(\frac{n}{g}\right) \Tr(U^{2n}). $$
Therefore to understand the statistics of $\mathcal{D}(L_C,f)$, it is enough to understand the statistics of $\Tr(\Theta_C^n)$. Typically, one is able to calculate the expected value of $\Tr(\Theta_C^n)$ as long as $n<2\alpha g$ for some $\alpha$. Then restricting down to $f$ such that $\supp(\hat{f})\subset (-\alpha,\alpha)$ allows you to calculate the one level density.

\subsection{Cyclic Covers}\label{background}

In this paper we are interested in the expected value of $\Tr(\Theta_C^n)$ as $C$ varies over cyclic covers. Namely, the family
\begin{align}\label{curv}
\Hh_r = \{ (C,\pi) : \pi:C\to\Pp^1 \mbox{ is a degree-$r$ map defined over $\Ff_q$} \}
\end{align}
for any positive integer $r$.

Clearly $\Hh_r$ is an infinite set. Therefore, we wish to find an invariant such that only finitely many curves have a given invariant. The obvious choice is the genus as we want to associate the family of $\Theta_C$ to a matrix group and we have that if the genus is fixed then the dimension of $\Theta_C$ is fixed. Hence, define the (finite) set of curves
\begin{align}\label{curvgen}
\Hh^{gen}_r(g) = \{(C,\pi)\in\Hh_r : g(C)=g\}
\end{align}
where we denote $g(C)$ as the genus of $C$.

This was first considered in the case $r=2$ by Rudnick \cite{R} in the affine setting and then extended by Chinis \cite{C} to the projective setting.

\begin{thm}[\cite{C},\cite{R}]

Let $q$ be odd. If $n$ is odd, then $\langle \Tr(\Theta_C^n)\rangle_{\Hh^{gen}_2(g)} =0$. Otherwise, if $n$ is even, then
$$\langle \Tr(\Theta_C^n)\rangle_{\Hh^{gen}_2(g)} = \begin{cases} -1 & 0< n<2g \\ -1-\frac{1}{q-1} & n=2g \\  0 & n>2g\end{cases} -D^{*}_2(g,n) + O\left(nq^{n/2-2g} + gq^{-g} \right) $$
where $D^{*}_2(g,n)$ is some explicit function such that $D^{*}_2(g,n)\ll nq^{-n/2}$
\end{thm}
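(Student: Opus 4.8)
The plan is to convert the average of $\Tr(\Theta_C^n)$ into an average of quadratic character sums over squarefree polynomials, in the spirit of Rudnick's treatment of the hyperelliptic ensemble, and then to extract the lower order terms by hand. Comparing coefficients in \eqref{zetafunc}, \eqref{Lfunc} and \eqref{Frob} gives the exact identity $\#C(\Ff_{q^n})=q^n+1-q^{n/2}\Tr(\Theta_C^n)$. Every $(C,\pi)\in\Hh^{gen}_2(g)$ is the smooth model of $w^2=F(x,z)$ for a squarefree binary form $F$ over $\Ff_q$ of degree $2g+2$, unique up to $GL_2(\Ff_q)$ and up to scaling $F\mapsto cF$; since $2g+2$ is even, $\chi_{2,n}(F(P))$ does not depend on the chosen homogeneous representative of $P\in\Pp^1(\Ff_{q^n})$ (here $\chi_{2,n}$ is the quadratic character of $\Ff_{q^n}^\times$), and counting points on $w^2=F(P)$ yields
\[
\Tr(\Theta_C^n)=-q^{-n/2}\sum_{P\in\Pp^1(\Ff_{q^n})}\chi_{2,n}(F(P)).
\]
The family is stable under the quadratic twist $F\mapsto\varepsilon F$ ($\varepsilon$ a nonsquare), and $\chi_{2,n}(\varepsilon)=\chi_{2,1}(\varepsilon)^{n}=(-1)^n$, so pairing each $(C,\pi)$ with its twist kills the average for odd $n$ -- this is the first case. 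For even $n$ I would pass to monic affine models and be left with $\langle\sum_{x\in\Ff_{q^n}}\chi_{2,n}(F(x))\rangle$, where $F$ runs over monic squarefree polynomials of degrees $2g+1$ and $2g+2$, up to bounded contributions from the points above infinity.

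\emph{Splitting off the square and the main term.} Next I would interchange the sums and group $x\in\Ff_{q^n}$ by the degree $m\mid n$ of $\Ff_q(x)/\Ff_q$, i.e.\ by its minimal polynomial $P$, a monic irreducible of degree $m$. Using $\chi_{2,n}(F(x))=\chi_{2,m}(F(x))^{n/m}$ and $\chi_{2,m}(F(x))=\bigl(\tfrac{F}{P}\bigr)$ (the Legendre symbol in $\Ff_q[x]$), the $m$ conjugates of $x$ contribute $m\bigl(\tfrac{F}{P}\bigr)$ when $n/m$ is odd and $m\cdot\mathbf 1_{P\nmid F}$ when $n/m$ is even. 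Since $\{m\mid n:\ n/m\ \text{even}\}=\{m:\ m\mid n/2\}$, summing the even part over all $x$ collapses \emph{exactly} to the number of $x\in\Ff_{q^{n/2}}$ with $F(x)\ne 0$:
\[
\sum_{x\in\Ff_{q^n}}\chi_{2,n}(F(x))=q^{n/2}-\#\{x\in\Ff_{q^{n/2}}:F(x)=0\}+\sum_{\substack{m\mid n\\ n/m\ \text{odd}}}m\!\!\sum_{\substack{P\ \text{monic irred}\\ \deg P=m}}\!\!\Bigl(\tfrac{F}{P}\Bigr).
\]
Dividing by $|\Hh^{gen}_2(g)|$ and multiplying by $-q^{-n/2}$, the $q^{n/2}$ gives the main term $-1$. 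Writing $\#\{x\in\Ff_{q^{n/2}}:F(x)=0\}=\sum_{m\mid n/2}m\,\omega_m(F)$ with $\omega_m(F)$ the number of degree-$m$ prime divisors of $F$, a routine Euler-product computation should give $\langle\omega_m(F)\rangle=\pi_q(m)/(q^m+1)+O(q^{m-2g})$ (with $\pi_q(m)$ the number of monic irreducibles of degree $m$), and multiplying back by $q^{-n/2}$ produces an explicit quantity of size $\ll q^{-n/2}\tau(n/2)\ll nq^{-n/2}$: this is (the bulk of) $-D_2^{\ast}(g,n)$. I would have to check here that, once $m\pi_q(m)=\sum_{e\mid m}\mu(e)q^{m/e}$ is expanded, the apparently dangerous pieces (for instance a $q^{-n/4}$ seemingly coming from $m=n/4$) cancel against lower order parts of the $m=n/2$ term, so that nothing larger than $O(nq^{-n/2})$ survives.

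\emph{The odd part, the boundary, and the main obstacle.} It remains to treat the last sum. For $m<n$ I would apply quadratic reciprocity in $\Ff_q[x]$, $\bigl(\tfrac{F}{P}\bigr)=\pm\bigl(\tfrac{P}{F}\bigr)=\pm\chi_F(P)$, and bound $\sum_{\deg P=m}\chi_F(P)$ via the Riemann Hypothesis (Weil) for $L(u,\chi_F)$, which for squarefree $F$ is $L_{C_F}(u)$ up to an elementary factor; these contribute only to the error $O(nq^{n/2-2g}+gq^{-g})$. The genuinely hard term is $m=n$: writing $\sum_{F\ \text{sqfree},\,\deg F=d}\bigl(\tfrac{F}{P}\bigr)u^{d}$ as the coefficient of $u^{d}$ in $L(u,\chi_P)(1-qu^2)/(1-u^{2n})$, I expect to read off that for (roughly) $g<n<2g$ this coefficient vanishes identically -- hence no contribution; that at $n=2g$ only the top coefficient of $L(u,\chi_P)$ survives, its value being governed by the functional equation and root number of $L(u,\chi_P)$ and, after averaging over $P$, producing the extra $-\tfrac{1}{q-1}$; and that for $n>2g$ this term evaluates, after normalization, to $+1$ plus admissible error, exactly cancelling the main term and leaving $0$. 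Assembling the pieces then gives the three stated cases with $D_2^{\ast}(g,n)$ explicit and $\ll nq^{-n/2}$ and residual error $O(nq^{n/2-2g}+gq^{-g})$. The hard part will be exactly this $m=n$ term: one must \emph{evaluate}, not merely bound, the average of $\sum_{\deg P=n}\bigl(\tfrac{F}{P}\bigr)$, which rests on the functional equations and root numbers of the quadratic $L$-functions $L(u,\chi_P)$ (equivalently, of the hyperelliptic $L$-functions themselves) and is precisely what creates the gap $g<n<2g$, the correction $-\tfrac{1}{q-1}$ at $n=2g$, and the vanishing beyond $2g$; a secondary nuisance is organising the many $\langle\omega_m(F)\rangle$-contributions of the previous step so that the genuine cancellations are visible and the claimed bound on $D_2^{\ast}$ really holds.
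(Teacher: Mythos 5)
Your outline is essentially the Rudnick--Chinis route (point counts $\to$ character sums, splitting even/odd exponents of $(F/P)$, squarefree sieve and the generating function $L(u,\chi_P)(1-qu^2)/(1-u^{2\deg P})$ for the family average); note that the present paper does not reprove this statement but cites it, and its own general-$r$ machinery specialized to $r=2$ only reaches $n<(1-\epsilon)2g$, so the cases $n=2g$ and $n>2g$ are exactly the part that cannot be waved through. That is where your proposal has a genuine gap: for the $\deg P=n$ term you only ``expect to read off'' the answer. What is actually needed is an evaluation, and it is not an average of root numbers: at $n=2g$ the surviving coefficient is the top coefficient $c_{n-1}(\chi_P)$ of $L(u,\chi_P)$; since $\deg P=n$ is even, $\chi_P$ is an even character, $L(u,\chi_P)=(1-u)L^{*}(u,\chi_P)$ with $L^{*}$ the $L$-polynomial of the curve $y^2=P(x)$, whose leading coefficient is $+q^{(n-2)/2}$ by the functional equation; hence $c_{n-1}(\chi_P)=-q^{(n-2)/2}$ \emph{deterministically} for every such $P$, and summing over the $\sim q^{2g}/(2g)$ primes of degree $2g$ produces exactly $-1/(q-1)$ (if the sign genuinely fluctuated, averaging would give $o(1)$, not $-1/(q-1)$). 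For $n>2g$ the coefficients $c_{2g+1},c_{2g-1}$ are no longer extreme ones: you must use the functional equation to transport them to low-order coefficients $\sum_{\deg B=\beta}\left(\frac{B}{P}\right)$ with $\beta$ small, and then average over $P$, where the perfect-square $B$ give the main term $+1$ that cancels $-1$; a one-sided Riemann Hypothesis bound, applied in either variable, is provably too weak here (it gives an error of size $O(g)$), so this two-variable evaluation is an unavoidable missing step, not a detail.

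A second, quantitative gap: for the terms $m\mid n$, $m<n$, $n/m$ odd, you propose to fix $F$, apply reciprocity and bound $\sum_{\deg P=m}\chi_F(P)$ by Weil. That bound is of size about $\deg(F)\,q^{m/2}\approx g\,q^{m/2}$, so after multiplying by $q^{-n/2}$ you get an error of order $g\,q^{m/2-n/2}$, which at $m=n/3$ is $g\,q^{-n/3}$. This exceeds both $D_2^{*}(g,n)\asymp nq^{-n/2}$ and the claimed error $O(nq^{n/2-2g}+gq^{-g})$ in most of the range, so as written you would lose precisely the lower-order structure the theorem asserts. The repair is to average over the family first for each fixed $P$ (the same sieve/generating-function computation you already set up for $m=n$, which is what Rudnick and Chinis do), giving admissible errors. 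Finally, the ``bounded contributions'' from the point(s) at infinity and from the degree-$2g+2$/non-monic bookkeeping are of size $q^{-n/2}$, the same order as individual terms of $D_2^{*}$, so in a statement whose content is the exact constants $-1$, $-1-\frac{1}{q-1}$, $0$ plus an explicit $D_2^{*}$, they must be computed rather than merely bounded.
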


Now, we notice that the first part resembles that of the expected value of traces of $USp(2g)$. Namely, that
$$\int_{USp(2g)} \Tr(U^n) dU = \begin{cases} 2g & n=0 \\ -1 & 0<n\leq 2g, n \mbox{ even} \\ 0 & \mbox{ otherwise} \end{cases}.$$
Therefore, one may express the above result and then a one-level density result in terms of the matrix group $USp(2g)$.

\begin{cor}[\cite{C},\cite{R}]
If $3\log_qg<n<4g-5\log_qg$, but $n\not=2g$, then
$$\langle \Tr(\Theta_C^n)\rangle_{\Hh^{gen}_2(g)} = \int_{USp(2g)} \Tr(U^n) dU + O\left(\frac{1}{g}\right).$$
Hence, if we also incorporate $D_2(g,n)$, we get that for any even Schwartz test function $f$ with $\supp(\hat{f}) \subset (-2,2)$, then
$$\langle\mathcal{D}(L_C,f)\rangle_{\Hh^{gen}_2(g)} = \int_{USp(2g)} \mathcal{D}(U,f) dU + \frac{dev^{*}_2(f)}{g} + o\left(\frac{1}{g}\right)$$
where
$$dev^{*}_2(f) = \hat{f}(0)\sum_{P}\frac{\deg(P)}{|P|^2-1} -\hat{f}(1)\frac{1}{q-1}$$
and the sum is over all prime polynomials with $|P|=q^{\deg(P)}$.
\end{cor}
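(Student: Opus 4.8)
The plan is to deduce the Corollary from the Chinis--Rudnick formula for $\langle\Tr(\Theta_C^n)\rangle_{\Hh^{gen}_2(g)}$ together with the identity \eqref{D(u,f)}, which rewrites one-level densities as weighted sums of traces. For the first assertion I would compare the Chinis--Rudnick formula with $\int_{USp(2g)}\Tr(U^n)\,dU$ term by term: for odd $n$ both vanish; for even $n$ with $0<n<2g$ both have main term $-1$; for $2g<n<4g$ both main terms are $0$; and the only discrepancy among the main terms occurs at $n=2g$, which is excluded by hypothesis. It then remains to check that, on the range $3\log_q g<n<4g-5\log_q g$, the remaining pieces are $O(1/g)$: since $x\mapsto xq^{-x/2}$ is eventually decreasing, $D^*_2(g,n)\ll nq^{-n/2}$ is largest at the left endpoint, where it is $\ll(\log_q g)g^{-3/2}=o(1/g)$; since $x\mapsto xq^{x/2-2g}$ is increasing, $nq^{n/2-2g}$ is largest at the right endpoint, where it is $\ll g\cdot g^{-5/2}=o(1/g)$; and $gq^{-g}=o(1/g)$. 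This gives the first statement.

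For the one-level density I would expand both sides of the asserted identity via \eqref{D(u,f)}. Using that $\hat f$ is even, that the eigenvalues of $\Theta_C$ occur in complex-conjugate pairs (by the functional equation of $L_C$) so that $\Tr(\Theta_C^{-n})=\Tr(\Theta_C^n)$, and that $\Tr(\Theta_C^0)=2g=\int_{USp(2g)}\Tr(U^0)\,dU$, the $n=0$ contributions cancel and one is left with
\[
\langle\mathcal D(L_C,f)\rangle_{\Hh^{gen}_2(g)}-\int_{USp(2g)}\mathcal D(U,f)\,dU=\frac1g\sum_{n\geq1}\hat f\!\left(\tfrac n{2g}\right)\Big(\langle\Tr(\Theta_C^n)\rangle_{\Hh^{gen}_2(g)}-\int_{USp(2g)}\Tr(U^n)\,dU\Big).
\]
By the term-by-term comparison above, the $n$th summand vanishes for odd $n$ and equals $-\tfrac1{q-1}\mathbf{1}_{n=2g}-D^*_2(g,n)+O(nq^{n/2-2g}+gq^{-g})$ for even $n$; the piece $-\tfrac1{q-1}\mathbf{1}_{n=2g}$ contributes exactly $-\hat f(1)/(g(q-1))$, the second term of $dev^{*}_2(f)/g$.

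There remain the $D^*_2$-sum and the error-sum. For the error-sum the crucial observation — standard in this circle of ideas, but genuinely needed — is that $\supp\hat f$ is a \emph{compact} subset of the open interval $(-2,2)$, so $\hat f$ vanishes outside some $[-\alpha,\alpha]$ with $\alpha<2$; then $q^{n/2-2g}\leq q^{(\alpha-2)g}$ throughout the support, whence $\sum_n|\hat f(n/2g)|\big(nq^{n/2-2g}+gq^{-g}\big)\ll g^2q^{(\alpha-2)g}+g^2q^{-g}\to0$, so this sum is $o(1)$ and contributes $o(1/g)$. Without $\alpha<2$ the terms near $n\approx4g$, where $nq^{n/2-2g}$ is only $O(g)$, would overwhelm the factor $1/g$, so it is the openness of the support interval that rescues the bound.

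For the $D^*_2$-sum, the same compactness makes it a finite sum; writing $\hat f(n/2g)=\hat f(0)+O(n/g)$ and using $|D^*_2(g,n)|\ll nq^{-n/2}$ gives $\sum_n\hat f(n/2g)D^*_2(g,n)=\hat f(0)\sum_n D^*_2(g,n)+O(1/g)$, and dominated convergence (majorant $\|\hat f\|_\infty\,nq^{-n/2}$) lets one pass to the limit $g\to\infty$. The step I expect to be the crux is the final identification: inserting the explicit form of $D^*_2(g,n)$ from \cite{C},\cite{R} — for even $n$, up to a remainder negligible after summation, this is $-q^{-n/2}\sum_{\deg P\mid n/2}\tfrac{\deg P}{|P|+1}$ — then interchanging the sums over $n$ and over prime polynomials $P$ and summing the geometric series $\sum_{2\deg P\mid n}q^{-n/2}=(|P|-1)^{-1}$, one obtains $-\lim_{g\to\infty}\sum_n D^*_2(g,n)=\sum_P\deg(P)/(|P|^2-1)$, so the $D^*_2$-sum contributes $g^{-1}\hat f(0)\sum_P\deg(P)/(|P|^2-1)+o(1/g)$, which is the first term of $dev^{*}_2(f)/g$. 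Assembling the three contributions yields $\langle\mathcal D(L_C,f)\rangle_{\Hh^{gen}_2(g)}=\int_{USp(2g)}\mathcal D(U,f)\,dU+dev^{*}_2(f)/g+o(1/g)$, which is the Corollary.
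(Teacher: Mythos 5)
Your deduction is correct, and it is essentially the same route the paper (and the cited sources \cite{C},\cite{R}) take: this corollary is only quoted here, but your argument mirrors the paper's own proofs of Corollary \ref{thincor} and Theorem \ref{refinedthm} — expand $\mathcal{D}$ via \eqref{D(u,f)}, insert the trace asymptotics term by term, peel off the $n=2g$ contribution $-\hat f(1)/(g(q-1))$, and resum $D^*_2(g,n)$ over $n$ and primes via the geometric series to get $\hat f(0)\sum_P \deg(P)/(|P|^2-1)$. Your observation that the support of $\hat f$, being closed and contained in the open interval $(-2,2)$, lies in some $[-\alpha,\alpha]$ with $\alpha<2$ — which is what makes the $nq^{n/2-2g}$ errors $o(1/g)$ rather than merely $O(1/g)$ — is correct and indeed the point where the openness of the support interval is genuinely used.
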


Subsequently, Bucur et al. \cite{BCDG+} consider the case where $r=\ell$ is an odd prime. As it turns out $\Hh^{gen}_\ell(g) \not=\emptyset$ if and only if $2g\equiv 0 \mod{\ell-1}$. Hence, we restrict only to those possible values of $g$.

\begin{thm}[\cite{BCDG+}]\label{BCDthm}
Let $\ell$ be an odd prime and suppose $q\equiv 1 \mod{\ell}$ and $2g\equiv 0 \mod{\ell-1}$. Then
$$\langle \Tr(\Theta_C^n)\rangle_{\Hh^{gen}_\ell(g)} =  -D^{*}_\ell(g,n) + O\left(\frac{n^{\ell-2}q^{n/\ell}}{gq^{n/2}} + q^{n(\frac{1}{2}+\epsilon) - (\frac{1}{2}-\epsilon)\frac{2g}{\ell-1}}\right) $$
where $D^{*}_\ell(g,n)$ is some explicit function such that $D^{*}_\ell(g,n)\ll nq^{-n/2}$
\end{thm}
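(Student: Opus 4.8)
The plan is to follow the established route for such geometric families: parametrise the covers, use the explicit formula to turn $\Tr(\Theta_C^n)$ into a short sum of $\ell$-th power residue symbols, interchange the order of summation, and separate a ``diagonal'' main term from an ``off-diagonal'' remainder that is killed by the Riemann Hypothesis for curves. Since $q\equiv 1\bmod\ell$, $\Ff_q$ contains the $\ell$-th roots of unity, so Kummer theory identifies $\Hh_\ell$ with the curves $C_F:Y^\ell=F(T)$ for $F\in\Ff_q(T)^\times/(\Ff_q(T)^\times)^\ell$; each class has a canonical representative $F=\prod_{i=1}^{\ell-1}f_i^{\,i}$ with the $f_i$ monic, squarefree and pairwise coprime, together with a ramification type at $\infty$. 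Riemann--Hurwitz writes $g(C_F)$ as an explicit function of $\deg f_1,\dots,\deg f_{\ell-1}$ and of this type, so $\Hh_\ell^{gen}(g)$ is a finite disjoint union of sets of coprime squarefree tuples with prescribed degrees (this is also where the constraint $2g\equiv 0\bmod\ell-1$ comes from), and $|\Hh_\ell^{gen}(g)|=q^{2g/(\ell-1)}(1+o(1))$ by the usual coprime-squarefree sieve.

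Next I would record the factorisation $L_{C_F}(u)=\prod_{j=1}^{\ell-1}L(u,\chi_F^{\,j})$, where $\chi_F$ is the order-$\ell$ Dirichlet character of $\Ff_q[T]$ with $\chi_F(P)=\left(\frac{F}{P}\right)_\ell$ for primes $P\nmid F$. Applying $u\frac{d}{du}\log(\cdot)$ to \eqref{Frob} and to this factorisation gives
\[
\Tr(\Theta_{C_F}^n)=-\frac{1}{q^{n/2}}\sum_{j=1}^{\ell-1}\sum_{\deg f=n}\Lambda(f)\chi_F^{\,j}(f)+E_\infty(C_F,n),
\]
where $E_\infty$ is a correction of size $O(1)$ coming from the behaviour at $\infty$ and from the Euler factors at the ramified primes $P\mid F$. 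Averaging over $\Hh_\ell^{gen}(g)$ and interchanging the sums reduces the theorem to understanding, for each prime power $f$ of degree $n$ and each $1\le j\le\ell-1$, the character average $\langle\chi_F^{\,j}(f)\rangle_{\Hh_\ell^{gen}(g)}$, together with the (negligible, $O(q^{-2g/(\ell-1)})$) average of $E_\infty$.

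The prime powers $f=P^m$ with $\ell\mid m$ form the diagonal: there $\chi_F^{\,j}(f)=1$ whenever $P\nmid F$, so their contribution is $-q^{-n/2}$ times $\sum_{\ell\mid m,\,m\deg P=n}\deg P$ weighted by the density of $\{F:P\nmid F\}$ inside the genus-restricted family. This density is an explicit rational function of $|P|$: its main part gives exactly $-D^*_\ell(g,n)$ (which is $\ll nq^{-n/2}$ since $\ell\mid m$ and $m\deg P=n$ force $\deg P\le n/\ell$), and its lower-order part, combined with the error in the sieve count of the family, produces the $n^{\ell-2}q^{n/\ell}/(gq^{n/2})$ term. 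For the off-diagonal $f$ (not an $\ell$-th power) one must exhibit cancellation in $\langle\chi_F^{\,j}(f)\rangle$. Here I would apply $\ell$-th power reciprocity in $\Ff_q[T]$ to rewrite $\left(\frac{F}{P}\right)_\ell^{jm}$ as a root of unity (depending only on degrees and leading coefficients) times $\prod_i\left(\frac{P}{f_i}\right)_\ell^{\,ijm}$; since $\ell$ is prime and $1\le i,j\le\ell-1$, the character $\left(\frac{P}{\cdot}\right)^{ijm}$ modulo $P$ is nontrivial precisely when $\ell\nmid m$. A Möbius inversion removes the pairwise-coprimality and squarefreeness conditions on the $f_i$ at the cost of auxiliary moduli, reducing the average to sums of twisted character sums over monic polynomials to a nontrivial modulus, each bounded by the Riemann Hypothesis for the attached curve with a saving of $|P|^{1/2}$ per unit of modulus. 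Reassembling, dividing by $|\Hh_\ell^{gen}(g)|\asymp q^{2g/(\ell-1)}$, summing over the $\le(\ell-1)^2$ pairs $(i,j)$ and over the prime powers $f$ of degree $n$ (a divisor-type count contributing the $n^{\ell-2}$), and multiplying by $q^{-n/2}$ yields the error term $q^{n(1/2+\epsilon)-(1/2-\epsilon)2g/(\ell-1)}$.

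The main obstacle is precisely this off-diagonal estimate: one needs a clean power-saving bound for the multivariable twisted sum $\sum\prod_i\left(\frac{P}{f_i}\right)^{ijm}$ over pairwise-coprime squarefree tuples with each $\deg f_i$ fixed, uniform in $P$ and in all the degrees. The coprimality conditions couple the variables $f_i$, so the Möbius inversion has to be organised so that the principal modulus $P$ is never perturbed and so that the Weil bound applies uniformly over all the auxiliary moduli; keeping the resulting proliferation of error terms under control — this is what forces the factor $n^{\ell-2}$ and the $q^{\epsilon}$ losses — while simultaneously pinning down the exact shape of $D^*_\ell(g,n)$ from the ramified-prime densities is where essentially all of the technical work lies.
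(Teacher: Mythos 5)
Your proposal follows essentially the same route as the paper's treatment of its more general Theorem \ref{thinthm} (which subsumes this cited result, since $\Hh^{thin}_\ell(g)=\Hh^{gen}_\ell(g)$ for prime $\ell$): Kummer parametrization by tuples of monic squarefree pairwise-coprime $f_i$, a trace/explicit formula reducing $\Tr(\Theta_C^n)$ to residue-symbol sums over primes of degree dividing $n$, the trivial-character (diagonal, $\ell\mid m$) contribution evaluated via the density of $F$ coprime to a fixed $P$ inside the genus-restricted family, and square-root cancellation from the Riemann Hypothesis for function-field Dirichlet $L$-functions on the nontrivial-character part, yielding the $q^{n(\frac12+\epsilon)-(\frac12-\epsilon)\frac{2g}{\ell-1}}$ error. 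The minor blemishes --- the reciprocity detour is unnecessary because $\left(\frac{\cdot}{P}\right)_\ell$ is already multiplicative in the numerator, i.e.\ a Dirichlet character mod $P$ (the paper packages the resulting sieve as a generating series bounded by contour integration, which is equivalent to your M\"obius-plus-Weil bookkeeping), and $|\Hh^{gen}_\ell(g)|$ is in fact $\asymp g^{\ell-2}q^{\frac{2g+2\ell-2}{\ell-1}}$ rather than $q^{2g/(\ell-1)}(1+o(1))$ --- do not affect the structure of the argument.
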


Thus we see in this case, the main term is actually $0$ and this resembles the expected value of traces of $U(2g)$. Namely, that
$$\int_{U(2g)} \Tr(U^n)=\begin{cases} 2g & n=0 \\ 0 & n\not=0\end{cases}.$$
Therefore, one may express the above result and then a one-level density result in terms of the matrix group $U(2g)$.

\begin{cor}[\cite{BCDG+}]
Let $\ell$ be an odd prime, $q\equiv 1\mod{\ell}$ and $2g\equiv 0 \mod{\ell-1}$. Then, for any $\epsilon>0$ and any $n$ such that $6\log_qg < n< (1-\epsilon) \left(\frac{2g}{\ell-1}+2\right)$, as $g\to\infty$,
$$\langle \Tr(\Theta_C^n)\rangle_{\Hh^{gen}_\ell(g)} = \int_{U(2g)} \Tr(U^n) dU + O\left(\frac{1}{g}\right).$$
Hence, if we also incorporate $D^{*}_\ell(g,n)$, we get that for any even Schwartz test function $f$ with $\supp(\hat{f}) \subset (-\frac{1}{\ell-1},\frac{1}{\ell-1})$, then
$$\langle\mathcal{D}(L_C,f)\rangle_{\Hh^{gen}_\ell(g)} = \int_{U(2g)} \mathcal{D}(U,f) dU + \frac{dev^{*}_\ell(f)}{g} + O\left(\frac{1}{g^{2-\epsilon}}\right)$$
where
$$dev^{*}_\ell(f) = -\hat{f}(0)\sum_{P}\frac{(\ell-1)\deg(P)}{(1+(\ell-1)|P|^{-1})(|P|^{\ell/2}-1)}$$
and the sum is over all prime polynomials with $|P|=q^{\deg(P)}$.
\end{cor}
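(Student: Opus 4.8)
The plan is to obtain the Corollary from Theorem~\ref{BCDthm} together with elementary estimates; the only step that is not purely formal is the evaluation of the resulting sum of $D^{*}_\ell(g,n)$.

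\emph{The trace asymptotic.} Since $\int_{U(2g)}\Tr(U^n)\,dU=0$ for $n\neq 0$, it suffices to show $\langle\Tr(\Theta_C^n)\rangle_{\Hh^{gen}_\ell(g)}=O(1/g)$ whenever $6\log_q g<n<(1-\epsilon)(\tfrac{2g}{\ell-1}+2)$. Apply Theorem~\ref{BCDthm}. The hypothesis $n>6\log_q g$ gives $q^{-n/2}<g^{-3}$, so from $|D^{*}_\ell(g,n)|\ll nq^{-n/2}$ and $n\ll g$ we get $|D^{*}_\ell(g,n)|\ll g^{-2}$. The first error term equals $\tfrac{1}{g}\, n^{\ell-2}q^{-n(\ell-2)/(2\ell)}$, and since $\ell\geq 3$ the map $m\mapsto m^{\ell-2}q^{-m(\ell-2)/(2\ell)}$ is bounded on $m\geq 1$, so this term is $O(1/g)$. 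In the second error term the exponent is $n(\tfrac12+\delta)-(\tfrac12-\delta)\tfrac{2g}{\ell-1}$ with $\delta$ the free parameter of Theorem~\ref{BCDthm}; under the upper bound on $n$ this is at most $\tfrac{2g}{\ell-1}\big((1-\epsilon)(\tfrac12+\delta)-(\tfrac12-\delta)\big)+O(1)$, which is $\leq -c_\epsilon\, g$ once $\delta$ is chosen small in terms of $\epsilon$, so this term is exponentially small in $g$. Adding the three contributions gives the claimed $O(1/g)$.

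\emph{The one-level density.} By \eqref{D(u,f)},
\begin{align*}
\langle\mathcal{D}(L_C,f)\rangle_{\Hh^{gen}_\ell(g)}=\frac{1}{2g}\sum_{n\in\Z}\hat f\!\left(\tfrac{n}{2g}\right)\langle\Tr(\Theta_C^n)\rangle_{\Hh^{gen}_\ell(g)}.
\end{align*}
The $n=0$ term contributes $\hat f(0)$, which is precisely $\int_{U(2g)}\mathcal{D}(U,f)\,dU$ (again using $\int_{U(2g)}\Tr(U^n)\,dU=0$ for $n\neq0$). Fix $\beta<\tfrac{1}{\ell-1}$ with $\supp(\hat f)\subseteq[-\beta,\beta]$; then only $1\leq|n|\leq 2g\beta$ contribute, and since $\beta(\ell-1)<1$ these $n$ fall in the regime of Theorem~\ref{BCDthm} provided $\delta$ is small. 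Folding the $n<0$ terms onto $n>0$ via $\Tr(\Theta_C^{-n})=\Tr(\Theta_C^n)$, applying Theorem~\ref{BCDthm} to each $n\geq1$, and replacing $\hat f(n/2g)$ by $\hat f(0)$ at the cost of $\ll\tfrac{1}{g}\sum_{n\geq1}\tfrac{n}{g}\,nq^{-n/2}=O(1/g^2)$ (using $\hat f\in C^1$), the $n\neq0$ part becomes
\begin{align*}
-\frac{\hat f(0)}{g}\sum_{n\geq1}D^{*}_\ell(g,n)+O\!\left(\tfrac{1}{g^{2}}\right),
\end{align*}
once one checks that the two error terms of Theorem~\ref{BCDthm}, weighted by $\tfrac{1}{2g}|\hat f(n/2g)|$ and summed over $n$, contribute respectively $\tfrac{1}{2g^2}\sum_{n\geq1}n^{\ell-2}q^{-n(\ell-2)/(2\ell)}=O(1/g^2)$ and an exponentially small amount.

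\emph{Identifying $dev^{*}_\ell(f)$.} It remains to evaluate $\sum_{n\geq1}D^{*}_\ell(g,n)$. Here I would unwind the explicit formula for $D^{*}_\ell(g,n)$ produced in the proof of Theorem~\ref{BCDthm} in \cite{BCDG+}: up to a truncation effect in $g$ that is exponentially small (because $D^{*}_\ell(g,n)\ll nq^{-n/2}$), it is a sum over prime polynomials $P$ of a weight depending only on $n$ and $\deg P$; interchanging the order of summation and summing the resulting geometric-type series in $n$ produces $\sum_{P}\frac{(\ell-1)\deg(P)}{(1+(\ell-1)|P|^{-1})(|P|^{\ell/2}-1)}$. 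Hence $-\tfrac{\hat f(0)}{g}\sum_{n\geq1}D^{*}_\ell(g,n)=\tfrac{dev^{*}_\ell(f)}{g}+O(g^{-2+\epsilon})$, where the $\epsilon$ absorbs any lower-order $g$-dependence remaining in $\sum_{n}D^{*}_\ell(g,n)$ and the slack needed to reconcile the support hypothesis on $f$ with the admissible range in Theorem~\ref{BCDthm}. Combining with the $n=0$ term,
\begin{align*}
\langle\mathcal{D}(L_C,f)\rangle_{\Hh^{gen}_\ell(g)}=\int_{U(2g)}\mathcal{D}(U,f)\,dU+\frac{dev^{*}_\ell(f)}{g}+O\!\left(\frac{1}{g^{2-\epsilon}}\right).
\end{align*}
The genuinely non-routine step is this last identification: it uses the explicit shape of $D^{*}_\ell(g,n)$, not merely the bound $D^{*}_\ell(g,n)\ll nq^{-n/2}$, so the real work lies in tracking that shape through the computation of \cite{BCDG+}; everything else is bookkeeping of error terms.
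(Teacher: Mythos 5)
Your bookkeeping is essentially correct, and it follows the same Poisson-summation template that this paper itself uses for its thin-family analogues (Corollary \ref{thincor} and Theorem \ref{refinedthm}); note that the corollary you were asked to prove is quoted from \cite{BCDG+} and is not proved anywhere in this paper, so those analogues are the only in-paper point of comparison. The first display does follow from Theorem \ref{BCDthm} exactly as you argue: $n>6\log_q g$ forces $D^{*}_\ell(g,n)\ll nq^{-n/2}\ll g^{-2}$, the first error term is $O(1/g)$ because $m\mapsto m^{\ell-2}q^{-m(\ell-2)/(2\ell)}$ is bounded for fixed $\ell,q$, and the second error term is exponentially small once the $\epsilon$ of Theorem \ref{BCDthm} is chosen small relative to yours. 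Likewise the reduction of $\langle\mathcal{D}(L_C,f)\rangle$ to $\hat f(0)-\frac{\hat f(0)}{g}\sum_{n\ge1}D^{*}_\ell(g,n)+O(g^{-2})$, via folding $n<0$ onto $n>0$ and replacing $\hat f(n/2g)$ by $\hat f(0)$ at cost $O(g^{-2})$, is sound.

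The gap is the step you flag yourself: the identification of $\sum_{n\ge1}D^{*}_\ell(g,n)$ with the stated prime sum is asserted, not carried out, and it cannot be carried out from the information reproduced here, since this paper records only the bound $D^{*}_\ell(g,n)\ll nq^{-n/2}$, and its own $D_r(g,n)$ of \eqref{D_r(g,n)} agrees with $D^{*}_\ell(g,n)$ only when $n=o(g)$. Since the entire content of the corollary beyond the $O(1/g)$ statement is precisely the explicit shape of $dev^{*}_\ell(f)$, deferring that shape to ``unwinding'' \cite{BCDG+} leaves the proof incomplete. Your sketch also blurs where the two factors come from: only $(|P|^{\ell/2}-1)^{-1}$ arises from a geometric series in $n$ (namely $n$ running over multiples of $\ell\deg(P)$, the $k$-sum in the proof of Theorem \ref{refinedthm}), whereas $(1+(\ell-1)|P|^{-1})^{-1}$ is the density of members of the family coprime to $P$ --- the analogue of the $a$-sum in \eqref{D_r(g,n)} --- and it takes that closed form only for $\deg(P)=o(g)$, so the contribution of large-degree primes must be truncated and bounded separately. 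Executing exactly these manipulations (interchange of the $P$- and $n$-sums, the geometric series in $k$, evaluation and truncation of the density weight) is the real work; the paper performs it in full for the thin family in the proof of Theorem \ref{refinedthm}, and a complete proof of this corollary would need the corresponding explicit formula for $D^{*}_\ell(g,n)$ from \cite{BCDG+} run through the same computation.
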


\subsection{A Thin Family}

In this paper, we wish to consider what happens for arbitrary, non-prime $r$. As it turns out, working with the family $\Hh_r^{gen}(g)$ when $r$ is not prime can be tricky. For instance, there is not a necessarily clean cut way to determine when the set is non-empty (see Remark \ref{thingenformrem}). Therefore, we will restrict to working with a smaller set that behaves more nicely.

Since we assume that $q\equiv 1 \mod{r}$, we know by Kummer theory that all the curves of $\Hh_r$ will have affine models of the form
$$Y^r = \alpha F(X)$$
with $F$ a monic, $r^{th}$-power free polynomial and $\alpha\in \Ff_q^*/(\Ff_q^*)^r$. Since $F$ is $r^{th}$-power free, we can write $F = f_1f_2^2\cdots f_{r-1}^{r-1}$ where the $f_i$ are monic, square-free and pairwise coprime.  We will say that a curve admits a thin affine model, if it has an affine model of the form
$$Y^r = \alpha \prod_{(i,r)=1} f_i^i \footnote{We will commonly write the argument for a sum or product as just $(i,r)=1$ which is to be interpreted as the sum or product from $i=1$ to $r$ such that $(i,r)=1$.}$$
with the $f_i$ being monic, square-free and pairwise coprime and $\alpha\in\Ff_q^*/(\Ff_q^*)^r$. Moreover, for any integers $n,m$ we denote $(n,m)$ as their greatest common divisor. Therefore, we define the following set
\begin{align}
\Hh^{thin}_r(g) := \{ (C,\pi)\in\Hh^{gen}_r(g) : C \mbox{ admits a thin affine model}  \}.
\end{align}
Note that if $r=\ell$ is prime, then  in fact $\Hh^{thin}_\ell(g) = \Hh^{gen}_{\ell}(g)$. So, all our results will include the previous results in Section \ref{background}.

The condition for a curve to have a thin affine model can be seen as a restriction of the possible ramification types for finite primes. As we will see $\Hh^{thin}_r(g) \not=\emptyset$ if and only if $2g \equiv 1-s \mod{r-1}$ for some $s|r$. The value of $s$ corresponds to the different possible ramification types for the prime at infinity. Therefore, to ensure that the prime at infinity behaves like all the other primes we will restrict to the case $2g\equiv 0 \mod{r-1}$ (i.e. $s=1,r$).

\begin{thm}\label{thinthm}
Let $q\equiv 1 \mod{r}$ and $2g\equiv 0\mod{r-1}$. If $(r,n)=1$, then $\langle\Tr(\Theta_C^n) \rangle_{\Hh^{thin}_r(g)}=0$. Otherwise, for $n\not=0$,
$$\langle\Tr(\Theta_C^n) \rangle_{\Hh^{thin}_r(g)} = \frac{-1}{q^{n/2}}\sum_{\substack{s|(r,n) \\ s\not=1}}\phi(s)q^{n/s} -D_r(g,n)+ O\left(\frac{1}{gq^{\frac{n}{2}}}+q^{\frac{n}{2}-(\frac{1}{2}+\epsilon)\frac{2g}{r-1}}\right)$$
where $\phi$ is the Euler totient function and $D_r(g,n)$ is an explicit function, given in \eqref{D_r(g,n)}. In particular, $D_r(g,n)\ll nq^{-n/2}$.
\end{thm}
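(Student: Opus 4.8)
The starting point is the explicit formula expressing $\Tr(\Theta_C^n)$ in terms of point counts. Since $C$ has affine model $Y^r = \alpha\prod_{(i,r)=1}f_i^i$ and $q\equiv 1\bmod r$, the number of affine points over $\Ff_{q^m}$ can be written using the multiplicative characters $\chi$ of order dividing $r$: roughly $\#C(\Ff_{q^m}) = q^m + \sum_{d|r,\,d\ne 1}\sum_{\chi\text{ of order }d}\chi(\alpha)\sum_{x}\chi(F(x))$ plus contributions at infinity and at ramified primes. Feeding this into $Z_C(u)$ and comparing with $L_C(u)=\det(1-\sqrt q u\,\Theta_C)$, one gets $\Tr(\Theta_C^n) = -q^{-n/2}\sum_{d|r,\,d\ne 1}\sum_{\chi:\,\mathrm{ord}(\chi)=d}\chi(\alpha)\,\psi_n(\chi,F)$ where $\psi_n(\chi,F)=\sum_{\deg f = n}\Lambda(f)\chi(f)$ is a (von Mangoldt-weighted) character sum over monic polynomials of degree $n$, together with lower-order corrections from the bad primes and from infinity. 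So the first step is to set up this formula carefully for the thin family, tracking the genus constraint $2g\equiv 0\bmod(r-1)$ and the condition defining a thin model.

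**Averaging.** The second step is to average over $\Hh^{thin}_r(g)$. A thin model is specified by the tuple $(f_i)_{(i,r)=1}$ of monic, squarefree, pairwise coprime polynomials and by $\alpha\in\Ff_q^*/(\Ff_q^*)^r$; the genus is a linear function of the degrees $\deg f_i$, so fixing $g$ amounts to fixing $\sum_{(i,r)=1} i\deg f_i$ up to the normalization. Averaging $\chi(\alpha)$ over $\alpha$ kills all terms unless $\chi$ is trivial on $(\Ff_q^*)^r$ — which it always is — so the $\alpha$-average contributes a factor that survives; the real work is averaging $\chi(F)=\prod_{(i,r)=1}\chi(f_i)^i$ over the $f_i$. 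Using a generating-function / Euler-product identity for sums over squarefree coprime tuples with weighted total degree (a standard device, e.g. via the ``approximate functional equation'' or a contour integral in the generating variable), the average of $\chi(f)$ over monic $f$ of degree $n$ picks out the main term $[n\text{-th coefficient of }L(u,\chi)/\zeta\text{-type factor}]$. For $\chi$ of exact order $d$, the sum $\langle\psi_n(\chi,F)\rangle$ is essentially $1$ when $d | n$ (the character becomes principal on degree-$n$ polys in the relevant range) and is $O(\text{error})$ otherwise; summing $\phi(d)$ such characters over $d|(r,n)$, $d\ne 1$, and dividing by $q^{n/2}$ produces exactly the main term $-q^{-n/2}\sum_{s|(r,n),s\ne1}\phi(s)q^{n/s}$. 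When $(r,n)=1$ there are no such $d$ and the main term vanishes, giving the first assertion.

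**Lower-order term and error.** The function $D_r(g,n)$ collects the secondary main terms: these come from the polar/zero structure of the relevant $L$-functions (the ``$\zeta$-corrections'' in the Euler product over primes dividing the conductor) and from the explicit contribution of the prime at infinity under the thin hypothesis; one packages them into the closed form \eqref{D_r(g,n)} and checks $D_r(g,n)\ll nq^{-n/2}$ by bounding the number of primes of each degree, $\#\{P:\deg P=k\}\le q^k/k$, against the $|P|^{-n/2}$-type decay. The error term $O(gq^{-g}\text{-type}\ +\ q^{n/2-(1/2+\epsilon)2g/(r-1)})$ has two sources: the tail of the Dirichlet series beyond the ``dual'' length (controlled by Weil's bound $|\psi_n(\chi,F)|\le 2g(\chi)q^{n/2}$ for each individual $C$, with $g(\chi)$ bounded in terms of $2g/(r-1)$), and the error in replacing the average over squarefree coprime tuples by its main term (a $q^{-g}$-type savings from the secondary pole of the tuple-counting generating function).

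**Main obstacle.** The genuinely delicate part is the combinatorics of averaging $\prod_{(i,r)=1}\chi(f_i)^i$ over pairwise coprime squarefree tuples with a \emph{weighted} degree constraint $\sum i\deg f_i$ fixed: the coprimality couples the $f_i$, so one cannot factor the average naively, and one must either inclusion–exclude over common factors or work with a multivariable generating function and extract a diagonal. Getting the secondary terms right — and in particular seeing that, for composite $r$, the divisors $s\mid(r,n)$ with $1<s<n$ each contribute a clean $\phi(s)q^{n/s}$ rather than some messier expression — is where the non-prime case departs from \cite{BCDG+} and requires the most care; I expect this bookkeeping, together with keeping the infinity-place contribution consistent with the hypothesis $2g\equiv0\bmod(r-1)$, to be the crux of the proof.
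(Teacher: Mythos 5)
Your skeleton matches the paper's: pass from curves to polynomials $F=\alpha\prod_{(i,r)=1}f_i^i$, write $-q^{n/2}\Tr(\Theta_C^n)$ as a sum of residue symbols $\left(\frac{F}{P}\right)_r^{in/\deg P}$ over primes with $\deg P\mid n$, split into principal and non-principal terms, and average over the family with generating functions for squarefree coprime tuples. But two steps as written would fail. First, the exact vanishing when $(r,n)=1$: you deduce it from the vanishing of the main term alone, which only yields $\langle\Tr(\Theta_C^n)\rangle=O(\mathrm{error})$, not $=0$. The exact statement comes from averaging over the leading coefficient $\alpha\in\Ff_q^*/(\Ff_q^*)^r$ (and over constants mod $P$): $\sum_{\alpha}\left(\frac{\alpha}{P}\right)_r^{in/\deg P}=\sum_{j=0}^{r-1}\xi_r^{ijn}$, which vanishes unless $r\mid in$, impossible for $1\le i\le r-1$ when $(r,n)=1$; so every non-principal term dies identically (and for $(r,n)>1$ this same step cuts the error term down to symbols of order dividing $(r,n)$). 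Your assertion that the $\alpha$-average ``always survives'' is exactly backwards, and without it the first claim of the theorem is not proved. Relatedly, your main-term bookkeeping (``$\langle\psi_n(\chi,F)\rangle$ is essentially $1$ when $d\mid n$'') is internally inconsistent: a symbol of exact order $s$ contributes the weighted prime count $\sum_{\deg P\mid n/s}\deg P= q^{n/s}$ times a coprimality density $\approx 1$, which is how $\phi(s)q^{n/s}$ arises.

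Second, the error and lower-order mechanisms are misidentified. Weil's bound for an individual curve, $|\psi_n(\chi,F)|\ll g\,q^{n/2}$, is just the trivial bound $|\Tr(\Theta_C^n)|\le 2g$ and cannot give $q^{n/2-(\frac12+\epsilon)\frac{2g}{r-1}}$; what is needed is square-root cancellation in $g$ for the \emph{family} average of a fixed non-principal symbol: for fixed $P$, the generating series of $\sum_{F}\left(\frac{F}{P}\right)_{(r,n)}^{jn/\deg P}$ over the thin family factors (up to an absolutely convergent piece) through Dirichlet $L$-functions of non-trivial characters mod $P$, hence is analytic in $|u|<q^{-1/2}$ and its coefficients are $\ll q^{(\frac12+\epsilon)d}$ with $d=\frac{2g+2r-2}{r-1}$; your ``secondary pole of the tuple-counting generating function'' instead describes the $O(q^{(\frac12+\epsilon)d})$ error in the main-term asymptotics $P_{r,s}(d)q^d$. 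Likewise $D_r(g,n)$ is not a polar/$L$-function or infinity-place correction: it is the $a\ge1$ part of the exact expansion of the coprimality density $|\F^{thin}_r(g;P)|/|\F^{thin}_r(g)|=\sum_{a}\left(-\phi(r)q^{-\deg P}\right)^a\left(1-\frac{a\deg P}{d}\right)^{\phi(r)-1}+O\left(\frac{1}{gq^{\deg P}}\right)$, where the factor $\left(1-\frac{a\deg P}{d}\right)^{\phi(r)-1}$ reflects the degree-$(\phi(r)-1)$ polynomial growth of $|\F^{thin}_r(g)|$; this inclusion–exclusion against the family count is where the new lower-order terms actually come from, and your plan does not contain it.
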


Comparing this to the previous results in Section \ref{background}, we see that when $r=2$, our error term is not as good. This is due to the fact that in \cite{C} and \cite{R}, they are able to use the simplicity of the square-free sieve to improve the error term. Since it is not so obvious of how to extend that technique to a general $r^{th}$-power sieve, we do not get as good of an error term in general.

However, comparing to the case of $r=\ell$, an odd prime, we see that our result is, in fact, stronger. While the error term in Theorem \ref{BCDthm} is  $\frac{n^{\ell-2}q^{n/\ell}}{gq^{n/2}}$, we obtain a (secondary) main term of the order of $\frac{q^{n/\ell}}{q^{n/2}}$ and many more, in the case of $r$ non-prime. Further, we replicate the result that the average is exactly equal to zero if $(\ell,n)=1$ as was previously shown only for $\ell=2$.

Also, note that if $\ell$ is prime, our value of $D_\ell(g,n)$ differs slightly from the previous results $D^{*}_\ell(g,n)$. In the case $\ell=2$, one can show that $D_2(g,n)\sim D^{*}_2(g,n)$ as $g$ tends to infinity. However, if $\ell$ is an odd prime, then we can show $D^{*}_{\ell}(g,n) \sim D_{\ell}(g,n)$ only if $n=o(g)$. This is still consistent with Theorem \ref{BCDthm} as $D^{*}_{\ell}(g,n)$ is only a true main term if $n\ll \log(g)$.

Therefore we see that if $r$ is odd, then the expected value of the trace will always tend to $0$, behaving like the unitary ensemble. Whereas if $r$ is even then the expected value tends to $-1$ if $n$ is even and $0$ if $n$ is odd, behaving like the symplectic ensemble. Thus, we can restate the result in terms of random matrices and then prove a one-level density result.

\begin{cor}\label{thincor}

Let $q\equiv1\mod{r}$ and $2g\equiv 0\mod{r-1}$. If $r$ is even, and $C_r\log_qg<n<(1-\epsilon)\frac{2g}{r-1}$ then
$$\langle\Tr(\Theta_C^n) \rangle_{\Hh^{thin}_r(g)} = \int_{USp(2g)} \Tr(U^n)dU + O\left( \frac{1}{g} \right)$$
and thus if $f$ is an even Schawrz test function with $\supp(\hat{f})\subset \left(\frac{-1}{r-1},\frac{1}{r-1}\right)$ then
$$\langle \mathcal{D}(L_C,f) \rangle_{\Hh^{thin}_r(g)} = \int_{USp(2g)} \mathcal{D}(U,f) dU + O\left(\frac{1}{g^{1-\epsilon}}\right).$$
Whereas if $r$ is odd and $C_r\log_qg < n< (1-\epsilon)\frac{2g}{r-1}$, then
$$\langle\Tr(\Theta_C^n) \rangle_{\Hh^{thin}_r(g)} = \int_{U(2g)} \Tr(U^n)dU + O\left( \frac{1}{g}\right)$$
and thus if $f$ is an even Schawrz test function with $\supp(\hat{f})\subset \left(\frac{-1}{r-1},\frac{1}{r-1}\right)$ then
$$\langle \mathcal{D}(L_C,f) \rangle_{\Hh^{thin}_r(g)} = \int_{U(2g)} \mathcal{D}(U,f) dU + O\left(\frac{1}{g^{1-\epsilon}}\right).$$

\end{cor}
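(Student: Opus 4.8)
The plan is to read everything off from Theorem~\ref{thinthm} together with the Poisson–summation identity \eqref{D(u,f)}. First I establish the stated asymptotic for $\langle\Tr(\Theta_C^n)\rangle_{\Hh^{thin}_r(g)}$ in the given window of $n$; then I sum it against $\hat f(n/2g)$, splitting the range of $n$, to obtain the one-level density.

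For the trace statement, treat first the case $(r,n)\neq 1$ and rewrite the secondary main term from Theorem~\ref{thinthm} as
\[
\frac{-1}{q^{n/2}}\sum_{\substack{s\mid(r,n)\\ s\neq 1}}\phi(s)\,q^{n/s}
= -\,\delta_{r,n} \;-\; \frac{1}{q^{n/2}}\sum_{\substack{s\mid(r,n)\\ s\geq 3}}\phi(s)\,q^{n/s},
\]
where $\delta_{r,n}=1$ if $2\mid(r,n)$ (equivalently, if $r$ and $n$ are both even) and $\delta_{r,n}=0$ otherwise. Each term of the residual sum is at most $r\,q^{-n(1/2-1/s)}\leq r\,q^{-n/6}$, so that sum is $\ll q^{-n/6}$; likewise $D_r(g,n)\ll n\,q^{-n/2}\ll g\,q^{-n/2}$, and the error term of Theorem~\ref{thinthm} is $\ll g^{-1}+q^{n/2-(\frac12+\epsilon)\frac{2g}{r-1}}$, whose second summand is $\ll q^{-\frac{3\epsilon}{2}\cdot\frac{2g}{r-1}}$ as soon as $n<(1-\epsilon)\frac{2g}{r-1}$. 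Hence if $C_r$ is chosen large enough (any $C_r>6$ works), then $q^{-n/6}+g\,q^{-n/2}\ll g^{-1}$ for all $n>C_r\log_q g$, so
\[
\langle\Tr(\Theta_C^n)\rangle_{\Hh^{thin}_r(g)} = -\,\delta_{r,n} + O\!\left(\tfrac1g\right)
\qquad\text{for } C_r\log_q g<n<(1-\epsilon)\tfrac{2g}{r-1}.
\]
When $(r,n)=1$ the left-hand side is identically $0$ by Theorem~\ref{thinthm}, and $\delta_{r,n}=0$ too, so the displayed identity holds for all $n$ in that window. Since $n<(1-\epsilon)\frac{2g}{r-1}\leq (1-\epsilon)2g<2g$, the right-hand side equals $\int_{USp(2g)}\Tr(U^n)\,dU$ when $r$ is even (which is $-1$ for even $n$ and $0$ for odd $n$), and equals $0=\int_{U(2g)}\Tr(U^n)\,dU$ when $r$ is odd; this is the first assertion in each case.

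For the one-level density, set $M=USp(2g)$ if $r$ is even and $M=U(2g)$ if $r$ is odd. By \eqref{OLD2} and \eqref{D(u,f)},
\begin{align*}
\langle\mathcal{D}(L_C,f)\rangle_{\Hh^{thin}_r(g)} &= \frac{1}{2g}\sum_{n\in\Z}\hat f\!\left(\tfrac{n}{2g}\right)\langle\Tr(\Theta_C^n)\rangle_{\Hh^{thin}_r(g)}, \\
\int_M\mathcal{D}(U,f)\,dU &= \frac{1}{2g}\sum_{n\in\Z}\hat f\!\left(\tfrac{n}{2g}\right)\int_M\Tr(U^n)\,dU.
\end{align*}
As $\supp(\hat f)$ is a compact subset of $(-\tfrac1{r-1},\tfrac1{r-1})$, there is $\delta>0$ with $\hat f(n/2g)=0$ once $|n|\geq(1-\delta)\tfrac{2g}{r-1}$, so both sums are finite. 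The $n=0$ terms equal $\hat f(0)$ on each side and cancel in the difference. For $0<|n|\leq C_r\log_q g$ one has only the crude bound $\langle\Tr(\Theta_C^n)\rangle-\int_M\Tr(U^n)\,dU=O(1)$ (from Theorem~\ref{thinthm} and $|\int_M\Tr(U^n)\,dU|\leq1$), and there are $O(\log_q g)$ such $n$, so these contribute $O\!\left(g^{-1}\log g\right)=O\!\left(g^{-1+\epsilon}\right)$. For $C_r\log_q g<|n|<(1-\delta)\tfrac{2g}{r-1}$, the first assertion (applied with $\delta$ in place of $\epsilon$) gives a difference $O(g^{-1})$ per term, hence a total contribution $\tfrac1{2g}\cdot O(g)\cdot O(g^{-1})=O(g^{-1})$. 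Adding the three ranges yields $\langle\mathcal{D}(L_C,f)\rangle_{\Hh^{thin}_r(g)}=\int_M\mathcal{D}(U,f)\,dU+O(g^{-1+\epsilon})$, as claimed.

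The argument is routine once Theorem~\ref{thinthm} is available; the only delicate points are choosing $C_r$ large enough to absorb simultaneously the $s\geq3$ tail $q^{-n/6}$, the bound $g\,q^{-n/2}$ coming from $D_r(g,n)$, and the exponentially small error of Theorem~\ref{thinthm}, and the observation that it is precisely the short range $|n|\leq C_r\log_q g$ — on which no main-term information is available — that forces the loss of $g^{\epsilon}$ in the final error term.
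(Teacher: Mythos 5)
Your proposal is correct and follows essentially the same route as the paper: apply Theorem~\ref{thinthm}, observe that for $n>C_r\log_q g$ the $s\geq 3$ secondary terms, $D_r(g,n)$, and the error terms are all $O(1/g)$ so the average trace matches the $USp(2g)$ (resp.\ $U(2g)$) moments, then insert this into the Poisson-summation expansion \eqref{D(u,f)}, with the short range $|n|\leq C_r\log_q g$ producing the $O(g^{-1+\epsilon})$ loss. The only cosmetic difference is that you sum over all $n\in\Z$ (implicitly using that $\Tr(\Theta_C^{-n})=\Tr(\Theta_C^n)$ since $L_C$ has real coefficients, and that $\hat f$ is even) whereas the paper folds the sum to $n\geq 1$ at the outset; this changes nothing of substance.
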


\subsection{Refining one-level Density}

We confirm the Katz-Sarnak predictions in Corollary \ref{thincor} by showing that the one-level density is controlled either by the unitary ensemble or the symplectic ensemble. However, this seems to only use the primary main term we obtain in Theorem \ref{thinthm}; namely $-1$ if $r$ and $n$ are even and $0$ otherwise. The secondary main terms of Theorem \ref{thinthm} appear to capture the inherent $r$-cyclic nature of the family $\Hh_r^{thin}(g)$, while the Katz-Sarnak predictions only appears to capture the parity of the family $\Hh_r^{thin}(g)$ (i.e. $r$ odd or even). This subsection will be devoted to hypothesizing on a possible refinement of the Katz-Sarnak predictions.

With this in mind let $N,s$ be integers such that $\phi(s)|N$. Let $M_{(s)}(N)\subset U(N)$ be a hypothetical matrix group such that for all $n\leq \frac{N}{\phi(s)}$
$$\int_{M_{(s)}(N)} \Tr(U^n) dU = \begin{cases} -\phi(s) & s|n \\ 0 & \mbox{otherwise} \end{cases}.$$
We do not know as of now what this matrix group may be, or even that it should exist. However, it should be noted that setting $M_{(2)}(2N) = USp(2N)$ satisfies this.

Then we could rewrite the result of Theorem \ref{thinthm} as
\begin{align}\label{RefinedRMT}
\langle\Tr(\Theta_C^n) \rangle_{\Hh^{thin}_r(g)}=  \sum_{\substack{s|r \\ s\not=1}}\frac{q^{n/s}}{q^{n/2}}\int_{M_{(s)}\left(\frac{\phi(s)2g}{r-1}\right)}\Tr(U^n)dU -D_r(g,n)+ O\left(\frac{1}{gq^{\frac{n}{2}}}+q^{\frac{n}{2}-(\frac{1}{2}+\epsilon)\frac{2g}{r-1}}\right).
\end{align}

Note that the moment of the trace condition that defines $M_{(s)}(N)$ is only for $n\leq \frac{N}{\phi(s)}$. Thus, if we were able to determine explicitly the family $M_{(s)}(N)$, then we would likely be able to compute higher moments and hence form a conjecture for values of $n$ greater than $\frac{2g}{r-1}$.

Now, we wish to combine all the matrix subgroups $M_{(s)}$ into one big subgroup of the unitaries. That is, if we have $N,r$ integers such that $(r-1)|N$ let $\{e_i : i=1,\dots,N\}$ be the standard basis for $\mathbb{C}^N$. For any $s|r$, $s\not=1$, let
$$V_s = span_{\mathbb{C}} \left\{e_{\frac{r}{s}jk} : j=1,\dots,\frac{N}{r-1}, k=1,\dots,s, (k,s)=1\right\}.$$
For any $U\in U(N)$, denote
$$U_s = U|_{V_s}.$$
Then we define
$$M_r(N) := \left\{U \in U(N) : U_s \in M_{(s)}\left(\frac{\phi(s)N}{r-1}\right), s|r, s\not=1\right\}.$$

%$$M_r(N) = \left\{ U\begin{pmatrix} U_r & 0 & \cdots \\ 0 & U_s & \cdots \\ \vdots & \vdots & \ddots  \end{pmatrix}U^{-1} : U\in U(N), U_s \in M_{(s)}\left( \frac{\phi(s)N}{r-1} \right), s|r, s\not=1 \right\}.$$
%That is, the conjugates of the block diagonal matrices where the diagonals are indexed by the divisors of $r$.

We would then like to relate $\langle \mathcal{D}(L_C,f) \rangle$ to $\int_{M_r(2g)} \mathcal{D}(U,f) dU$. However, since the expected traces of the matrices in \eqref{RefinedRMT} appear with weights, we need to massage $\mathcal{D}(U,f)$ slightly.

We now define the one-level density of a $U\in M_r(N)$ weighted by $q$ as
$$\mathcal{D}_q(U,f) := \sum_{\substack{s|r\\s\not=1}} \sum_{j=1}^{\frac{N}{r-1}} \sum_{(k,s)=1} \sum_{n\in\Z} f\left( N \left(\frac{\theta_{\frac{r}{s}jk} +i\log(q)\left(\frac{1}{s}-\frac{1}{2}\right)}{2\pi}-n \right)\right)$$
where the $\theta_{\frac{r}{s}jk}$ are the eigenangles of $U$. We see that in the case $r=2$, this is the same as \eqref{D(u,f)}. Moreover, for any $s|r$, we should think of the summand as a one-level density of $U_s$.
%
%Now, for any $U\in M_r(N)$, we enumerate the eigenangles $\theta_1,\dots,\theta_N$ such that
%$$\exp\left( \mbox{diag}\left(\left(i\theta_{\frac{r}{s}j}, i\theta_{2\frac{r}{s}j}, \dots, i\theta_{\frac{N}{r-1}\frac{r}{s}j}\right)_{(j,s)=1}\right)\right) \in M_{(s)}\left(\frac{\phi(s)N}{r-1}\right).$$

\begin{thm}\label{refinedthm}

Suppose $q\equiv 1 \mod{r}$ and $2g\equiv 0\mod{r-1}$. Then if $f$ is a Schwartz test function such that $\supp(\hat{f})\subset [0,\frac{1}{r-1})$, we get that
$$\langle\mathcal{D}(L_C,f)\rangle_{\Hh_r^{thin}(g)} = \int_{M_r(2g)}\mathcal{D}_q(U,f) dU -\frac{dev_r(f)}{2g} + O\left(\frac{1}{g^2}\right)$$
where if we denote $d=\frac{2g+2r-2}{r-1}$
$$dev_r(f):=\hat{f}(0)\sum_{\substack{s|r \\ s\not=1}} \phi(s) \sum_{P} \frac{\deg(P)}{|P|^{s/2}-1} \sum_{a=1}^{\lfloor \frac{d}{\deg(P)}\rfloor} \left(\frac{-\phi(r)}{|P|}\right)^a \left(1-\frac{a\deg(P)}{d}\right)^{\phi(r)-1}$$
and the sum is over all prime polynomials with $|P|=q^{\deg(P)}$.
\end{thm}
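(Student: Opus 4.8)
The plan is to feed Theorem~\ref{thinthm} into the explicit expansion of the one-level density. Using \eqref{OLD2} and \eqref{D(u,f)} we have $\mathcal{D}(L_C,f)=\mathcal{D}(\Theta_C,f)=\tfrac1{2g}\sum_{n\in\Z}\hat{f}(\tfrac n{2g})\Tr(\Theta_C^n)$, so averaging over $\Hh_r^{thin}(g)$ gives $\langle\mathcal{D}(L_C,f)\rangle=\tfrac1{2g}\sum_{n\in\Z}\hat{f}(\tfrac n{2g})\langle\Tr(\Theta_C^n)\rangle_{\Hh_r^{thin}(g)}$. First I would record that $\supp(\hat{f})\subset[0,\tfrac1{r-1})$ forces $\hat{f}(\tfrac n{2g})=0$ unless $0\le n<\tfrac{2g}{r-1}$; this is exactly the range in which the secondary terms of Theorem~\ref{thinthm} are genuine main terms and, simultaneously, the range $n\le\tfrac N{\phi(s)}$ in which the defining moment of $M_{(s)}(N)$ with $N=\tfrac{\phi(s)2g}{r-1}$ is valid. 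The $n$ with $(r,n)=1$ contribute nothing by Theorem~\ref{thinthm}, and $n=0$ contributes $\hat{f}(0)$, which is reproduced on the matrix side by $\tfrac1{2g}\sum_{s\mid r,\,s\ne1}\hat{f}(0)\dim V_s=\hat{f}(0)$ since $\sum_{s\mid r,\,s\ne1}\phi(s)=r-1$.

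Next I would dispose of the error term $O(g^{-1}q^{-n/2}+q^{n/2-(1/2+\epsilon)2g/(r-1)})$ of Theorem~\ref{thinthm}. Its first part gives $\tfrac1{2g}\sum_{0\le n<2g/(r-1)}\hat{f}(\tfrac n{2g})\,O(g^{-1}q^{-n/2})\ll g^{-2}$ by geometric convergence of $\sum q^{-n/2}$; its second part is, for every $n<\tfrac{2g}{r-1}$, bounded by $q^{-2\epsilon g/(r-1)}$, so after the factor $\tfrac1{2g}$ and summation over the $O(g)$ admissible $n$ it is exponentially small. Both are absorbed into the stated $O(g^{-2})$.

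For the main term I would reorganize $\tfrac1{2g}\sum_n\hat{f}(\tfrac n{2g})\bigl(-q^{-n/2}\sum_{s\mid(r,n),\,s\ne1}\phi(s)q^{n/s}\bigr)$ as $\sum_{s\mid r,\,s\ne1}\bigl(-\tfrac{\phi(s)}{2g}\sum_{s\mid n}\hat{f}(\tfrac n{2g})\,q^{n(1/s-1/2)}\bigr)$, and then, for each $s$, use the defining property of $M_{(s)}(\tfrac{\phi(s)2g}{r-1})$ in its valid range to replace the coefficient $-\phi(s)$ (which occurs exactly when $s\mid n$) by $\int_{M_r(2g)}\Tr(U_s^n)\,dU$. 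Writing $\Tr(U_s^n)=\sum_{j,k}e^{in\theta_{(r/s)jk}}$ over the eigenangles of $U$ on $V_s$ and inverting the Poisson summation behind \eqref{D(u,f)}, the weight $q^{n(1/s-1/2)}$ is precisely what the imaginary shift $i\log(q)(\tfrac1s-\tfrac12)$ in the definition of $\mathcal{D}_q$ produces; summing over $s$ then recognizes $\int_{M_r(2g)}\mathcal{D}_q(U,f)\,dU$.

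It remains to extract $-\tfrac{dev_r(f)}{2g}$ from $-\tfrac1{2g}\sum_n\hat{f}(\tfrac n{2g})D_r(g,n)$. Since $D_r(g,n)\ll nq^{-n/2}$ the $n$-sum localizes near $0$, so I may replace $\hat{f}(\tfrac n{2g})$ by $\hat{f}(0)$ at the cost of $O(g^{-1}\sum_n nq^{-n/2})=O(g^{-1})$, absorbed into $O(g^{-2})$ after the outer factor $\tfrac1{2g}$. Plugging in the explicit form \eqref{D_r(g,n)} of $D_r(g,n)$ and interchanging the sum over $n$ with the sums over prime polynomials $P$ and the exponent $a$ — keeping track of how the parameter $d=\tfrac{2g+2r-2}{r-1}$ enters through the count of admissible tuples of $f_i$'s, which is what produces both the factor $(1-\tfrac{a\deg(P)}{d})^{\phi(r)-1}$ and the cutoff $a\le\lfloor d/\deg(P)\rfloor$ — should reassemble exactly the stated $dev_r(f)$. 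I expect this last step to be the main obstacle: one must unwind \eqref{D_r(g,n)}, justify the interchange of summations (the $P$-sum only converges because of the degree truncation, which must be applied consistently with the degree constraint defining $\Hh_r^{thin}(g)$), and check that every discarded contribution is genuinely $O(g^{-2})$ rather than $O(g^{-1})$. The secondary delicate point is the identification in the previous paragraph, where one has to verify that membership in $M_r(2g)$ restricts on each invariant subspace $V_s$ to an element of $M_{(s)}(\tfrac{\phi(s)2g}{r-1})$, so that $\int_{M_r(2g)}\Tr(U_s^n)\,dU$ indeed equals the prescribed moment.
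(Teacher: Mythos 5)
Your proposal is correct and follows essentially the same route as the paper: expand $\langle\mathcal{D}(L_C,f)\rangle$ via Poisson summation, insert Theorem \ref{thinthm}, match the weighted secondary main terms $-\phi(s)q^{n(1/s-1/2)}$ with $\int_{M_r(2g)}\mathcal{D}_q(U,f)\,dU$ through the defining moments of $M_{(s)}$ and the imaginary shift in $\mathcal{D}_q$, and then reassemble $-\frac{1}{2g}\sum_n\hat{f}(\frac{n}{2g})D_r(g,n)$ into $-\frac{dev_r(f)}{2g}$ by replacing $\hat{f}(\frac{n}{2g})$ with $\hat{f}(0)$ and interchanging the sums over $n$, $P$ and $a$ (the only ingredient you leave implicit, writing $n=k\deg(P)$ and using $(r,k)-1=\sum_{s\mid(r,k),\,s\neq1}\phi(s)$ to produce the factors $\frac{\phi(s)}{|P|^{s/2}-1}$, is exactly how the paper concludes).
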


Note that here we take $f$ such that $\supp(\hat{f})\subset[0,\frac{1}{r-1})$ and so, in particular, $f$ is not even. This is to remedy the fact that when we perform our poisson summation we will obtain summands of the form $\hat{f}(\frac{n}{2g})\frac{q^{n/s}}{q^{n/2}}$. Thus, if $n$ is negative, we would end up with a factor of $\frac{q^{n/2}}{q^{n/s}}$ in the calculation of the one-level density of the random matrices which no longer matches the one-level density of the family of $L$ functions.

Again, we see that if $\ell$ is prime, then $dev_{\ell}(f)$ differs from $dev^{*}_{\ell}(f)$ from the previous results. However, as with $D_{\ell}(g,n)$, if $\deg(P) = o(g)$, then
$$\sum_{a=1}^{\lfloor \frac{d}{\deg(P)}\rfloor} \left(\frac{-\phi(\ell)}{|P|}\right)^a \left(1-\frac{a\deg(P)}{d}\right)^{\phi(r)-1} \sim \frac{1}{1+(\ell-1)|P|^{-1}}$$
which would make the summand of $dev_{\ell}(f)$ consistent with that of $dev^{*}_{\ell}(f)$.

\section{From Curves to Polynomials}\label{polysect}

In this section we show that we can parameterize each of our families by polynomials.

\subsection{Polynomials}\label{polysubsec}

For any polynomial $F\in\Ff_q[X]$, denote by $C_{F,r}$ the curve such that
\begin{align}
\Ff_q(C_{F,r}) = \Ff_q(X)(F^{1/r})
\end{align}
where we denote $\Ff_q(C_{F,r})$ as the field of functions of the curve $C$. Since we are assuming $q\equiv 1 \mod{r}$, then $\Ff_q(X)$ contains the $r^{th}$-roots of unity and so Kummer Theory tells us that all the curves in $\Hh_r$ are of this form. However, not all polynomials will give us a curve in $\Hh_r$. For example, if $F=G^s$, for some $s|r$, then the resulting curve $C_{F,r}$ will actually be an $\frac{r}{s}$ cyclic cover.

Further not all polynomials will give different curves. For example, if $F$ and $G$ are two polynomials such that there exists an $H\in\Ff_q(X)$ and an $i$, coprime to $r$, with $F=G^iH^r$, the we would get that $C_{F,r}$ and $C_{G,r}$ would be birationally equivalent.

It is easy to see that these are the only two restraints. Therefore, we define the set
$$\F_r := \{F\in \Ff_q[X] : F \mbox{ is $r^{th}$-power free and not a power of $s$ for any $s|r$, $s\not=1$}\}$$
and we get the following lemma.
\begin{lem}\label{setlem}
There is a $\varphi(r):1$ map between the two sets
\begin{align*}
\F_r & \to  \Hh_r \\
F & \mapsto  C_{F,r}
\end{align*}
\end{lem}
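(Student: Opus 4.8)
The plan is to set up the correspondence via Kummer theory and then carefully count the fibers. First I would invoke Kummer theory for the field $\Ff_q(X)$, which contains the $r$-th roots of unity since $q \equiv 1 \pmod r$: degree-$r$ cyclic extensions of $\Ff_q(X)$ correspond to order-$r$ (or, allowing degenerate cases, nontrivial) cyclic subgroups of $\Ff_q(X)^*/(\Ff_q(X)^*)^r$. Concretely, $L = \Ff_q(X)(F^{1/r})$ for $F \in \Ff_q(X)^*$, and $\Ff_q(X)(F^{1/r}) = \Ff_q(X)(G^{1/r})$ precisely when $F$ and $G$ generate the same subgroup of $\Ff_q(X)^*/(\Ff_q(X)^*)^r$, i.e. when $F \equiv G^i \pmod{(\Ff_q(X)^*)^r}$ for some $i$ coprime to $r$. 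The extension has degree exactly $r$ (rather than a proper divisor) iff the class of $F$ has exact order $r$, which, after reducing $F$ to its $r$-th-power-free part, is exactly the condition that $F$ is not of the form $G^s$ for a divisor $s \mid r$, $s \neq 1$. This gives a bijection between $\F_r$ modulo the equivalence $F \sim G^i H^r$ ($(i,r)=1$, $H \in \Ff_q(X)^*$) and the set of degree-$r$ cyclic covers $C \to \Pp^1$ up to $\Ff_q(X)$-isomorphism, i.e. $\Hh_r$.

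Next I would pin down that each element of $\F_r$ is a genuine canonical representative of its class, and count how many elements of $\F_r$ lie in a single class. Given $F \in \F_r$, any $G$ with $C_{G,r} \cong C_{F,r}$ satisfies $G = F^i H^r$ in $\Ff_q(X)^*$ for some $i$ coprime to $r$ and some rational function $H$; reducing the right side to its $r$-th-power-free monic-normalized representative shows there is exactly one such $G$ for each residue $i \bmod r$ with $(i,r)=1$ — and these are genuinely distinct as polynomials in $\F_r$ because distinct exponents $i$ give distinct $r$-th-power-free parts (one can see this prime-factor by prime-factor: if $f_j$ denotes the product of primes appearing to exponent $j$ in $F$, then $F^i$ has $f_j$ appearing to exponent $ij \bmod r$, and the multiset of these exponents determines $i$ uniquely mod $r$ since $\gcd(i,r)=1$ and at least one $f_j$ with $\gcd(j,r)$... — more carefully, since $F$ is not a perfect $s$-th power for any $s \mid r$, the tuple of exponents has gcd-with-$r$ equal to $1$, which forces the map $i \mapsto (ij \bmod r)_j$ to be injective on $(\Z/r)^*$). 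Hence each fiber of $F \mapsto C_{F,r}$ has exactly $\varphi(r) = |(\Z/r)^*|$ elements.

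I would organize the write-up as: (1) state the Kummer-theoretic bijection; (2) show $F \mapsto C_{F,r}$ is surjective onto $\Hh_r$ using that every curve in $\Hh_r$ is birational to some $Y^r = F(X)$ with $F$ $r$-th-power-free (clearing denominators in the Kummer generator), and that $F \in \F_r$ can be chosen because degree $r$ forces the exact-order-$r$ condition; (3) compute the fiber size as $\varphi(r)$ via the injectivity argument above. The main obstacle I expect is step (3): carefully proving that the $\varphi(r)$ candidates $F^i H^r$ reduce to $\varphi(r)$ \emph{distinct} polynomials in $\F_r$ — equivalently that no nontrivial $i \in (\Z/r)^*$ fixes the class of $F$ as a polynomial. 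This is where the hypothesis ``not a power of $s$ for any $s \mid r$'' does real work: it guarantees the exponent-tuple of $F$ generates $\Z/r$, so the $(\Z/r)^*$-action on these tuples by multiplication is free. The rest is bookkeeping with Kummer theory and normalization of $r$-th-power-free polynomials (fixing monic leading coefficients, and noting the constant $\alpha \in \Ff_q^*/(\Ff_q^*)^r$ is absorbed since $\Ff_q(X)$ already contains $\Ff_q^*$, so it does not affect the function field — this is consistent with the later parametrization and should be remarked on to avoid confusion with the affine-model discussion).
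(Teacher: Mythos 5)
Your overall route is the same as the paper's, which itself only sketches this lemma: Kummer theory, the observation that $F=G^s$ degenerates and that $F=G^iH^r$ with $(i,r)=1$ gives the same cover, and the assertion that these are the only two constraints. Your verification that the $(\Z/r\Z)^*$-action on the exponent tuple is free --- using that the gcd of the exponents of $F$ with $r$ is $1$ --- is precisely the part the paper dismisses as ``easy to see'', and it is correct and worth writing out.

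However, one step in your proposal is wrong and, as written, would break the $\varphi(r):1$ count: the claim that the constant $\alpha\in\Ff_q^*/(\Ff_q^*)^r$ is ``absorbed'' because $\Ff_q^*\subset\Ff_q(X)$. Containing $\alpha$ is not the same as containing $\alpha^{1/r}$: one has $\Ff_q(X)\bigl((\alpha F)^{1/r}\bigr)=\Ff_q(X)\bigl(F^{1/r}\bigr)$ only if $\alpha F\equiv F^i \pmod{(\Ff_q(X)^*)^r}$ for some $i$ coprime to $r$, and your own gcd argument forces $i=1$, i.e.\ $\alpha\in(\Ff_q^*)^r$. For instance, with $r=2$ and $\alpha$ a nonsquare, $Y^2=f$ and $Y^2=\alpha f$ are distinct quadratic twists giving distinct quadratic extensions of $\Ff_q(X)$, hence distinct elements of $\Hh_2$. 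So normalizing representatives to be monic, as you suggest, either makes the map fail to be surjective (the twists $Y^r=\alpha F$ are never hit), or, if you keep the $\alpha$'s in $\F_r$ while identifying their covers, inflates the fibers to size $r\varphi(r)$. The correct bookkeeping --- which the paper relies on later, e.g.\ the factor $r$ from ``there are $r$ different $\alpha\in\Ff_q^*/(\Ff_q^*)^r$'' in Section 4 --- keeps the leading coefficient as a class in $\Ff_q^*/(\Ff_q^*)^r$, and the fiber over a given cover consists of the $\varphi(r)$ reduced representatives of $F^i$, $i\in(\Z/r\Z)^*$, with leading-coefficient classes $\alpha^i$. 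A smaller caveat: deducing that the exponent tuple has gcd $1$ with $r$ from ``$F$ is not an $s$-th power'' requires reading the power condition up to constants (for composite $r$, $F=\alpha f^2$ with $\alpha$ a nonsquare is not a square in $\Ff_q[X]$ yet has exponent gcd $2$ with $r=4$ and yields a constant-field extension); the paper's definition is equally loose here, but your write-up should state the hypothesis it actually uses.
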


\subsection{Genus Formula}\label{genformsect}

Let $F\in\F_r$. We can write
$$F = \alpha f_1f_2^2\cdots f^{r-1}_{r-1}$$
where the $f_i$ are monic, squarefree and pairwise coprime and $\alpha\in\Ff_q^*/(\Ff_q^*)^r$. Then, we apply the Riemann Hurwitz formula (Theorem 7.16 of \cite{rose}) to get that the genus of $C_{F,r}$, $g$, satsifies
\begin{align}\label{genform}
2g+2r-2 = \sum_{i=1}^{r-1} (r-(r,i))\deg(f_i) + r-(r,\deg(F))
\end{align}
where we use the convention that for every pair of integers $m$ and $n$, $(m,n)$ denotes the greatest common divisor of $m$ and $n$. Therefore, define the set
\begin{align}\label{polygen}
\F^{gen}_r(g) = \left\{ F\in\F_r : 2g+2r-2 = \sum_{i=1}^{r-1} (r-(r,i))\deg(f_i) + r-(r,\deg(F)) \right\}.
\end{align}

Then we get an immediate lemma.
\begin{lem}\label{gensetlem}
As long as $\F^{gen}_r(g)\not=\emptyset$, then there is a $\varphi(r):1$ surjective map between the two sets $\F^{gen}_r(g)$ and $\Hh^{gen}_r(g)$. In particular as long as $\F^{gen}_r(g)\not=\emptyset$, then
$$\langle \Tr(\Theta_C^n)\rangle_{\Hh^{gen}_r(g)} = \langle \Tr(\Theta_{C_{F,r}}^n) \rangle_{F^{gen}_r(g)}.$$
\end{lem}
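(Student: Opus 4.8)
The plan is to deduce this directly from Lemma \ref{setlem} together with the genus formula \eqref{genform}. Lemma \ref{setlem} furnishes a $\varphi(r):1$ surjection $\F_r\to\Hh_r$, $F\mapsto C_{F,r}$, while \eqref{genform} identifies, among $F\in\F_r$, precisely those whose associated curve has genus $g$: writing $F=\alpha f_1f_2^2\cdots f_{r-1}^{r-1}$ with the $f_i$ monic, squarefree and pairwise coprime, the curve $C_{F,r}$ has genus $g$ if and only if $2g+2r-2=\sum_{i=1}^{r-1}(r-(r,i))\deg(f_i)+r-(r,\deg F)$, which is exactly the defining condition of $\F^{gen}_r(g)$ in \eqref{polygen}. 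So I would first record the reformulation $\F^{gen}_r(g)=\{F\in\F_r : g(C_{F,r})=g\}$, after which the claim becomes the statement that the map of Lemma \ref{setlem} restricts to a $\varphi(r):1$ surjection on the genus-$g$ fibers.

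Next I would check the two containments. If $F\in\F^{gen}_r(g)$ then $g(C_{F,r})=g$, so $(C_{F,r},\pi)\in\Hh^{gen}_r(g)$; thus $F\mapsto C_{F,r}$ carries $\F^{gen}_r(g)$ into $\Hh^{gen}_r(g)$. Conversely, given $(C,\pi)\in\Hh^{gen}_r(g)$, surjectivity in Lemma \ref{setlem} yields some $F\in\F_r$ with $C_{F,r}=C$, and since $g(C)=g$ we get $F\in\F^{gen}_r(g)$; hence the restricted map is onto $\Hh^{gen}_r(g)$. The hypothesis $\F^{gen}_r(g)\neq\emptyset$ then guarantees $\Hh^{gen}_r(g)\neq\emptyset$, so the averages in the statement are well defined.

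Then I would verify the fibers still have size $\varphi(r)$. The fiber of $\F_r\to\Hh_r$ over a fixed curve consists of $\varphi(r)$ polynomials, any two of which, say $F$ and $G$, are related by $G=F^iH^r$ for some $i$ coprime to $r$ and some $H\in\Ff_q(X)$; hence $C_{G,r}$ and $C_{F,r}$ are birationally equivalent, therefore isomorphic as smooth projective curves, and in particular have the same genus. So as soon as one element of a fiber lies in $\F^{gen}_r(g)$, the whole fiber does, and the restriction $\F^{gen}_r(g)\to\Hh^{gen}_r(g)$ is again $\varphi(r):1$ and surjective; in particular $|\F^{gen}_r(g)|=\varphi(r)\,|\Hh^{gen}_r(g)|$.

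Finally, the equality of averages follows because $\Theta_C$ is determined by $L_C(u)$ through \eqref{Frob}, hence by the isomorphism class of $C$, so $\Tr(\Theta_{C_{F,r}}^n)$ is constant along the fibers of $F\mapsto C_{F,r}$. Summing over $\F^{gen}_r(g)$ then counts each value $\Tr(\Theta_C^n)$, $(C,\pi)\in\Hh^{gen}_r(g)$, exactly $\varphi(r)$ times, giving
$$\langle \Tr(\Theta_{C_{F,r}}^n)\rangle_{\F^{gen}_r(g)}=\frac{1}{|\F^{gen}_r(g)|}\sum_{F\in\F^{gen}_r(g)}\Tr(\Theta_{C_{F,r}}^n)=\frac{\varphi(r)}{\varphi(r)\,|\Hh^{gen}_r(g)|}\sum_{(C,\pi)\in\Hh^{gen}_r(g)}\Tr(\Theta_C^n)=\langle \Tr(\Theta_C^n)\rangle_{\Hh^{gen}_r(g)}.$$
There is no real obstacle here: the only point that needs a moment's care is that the $\varphi(r):1$ property survives restriction to a fixed genus, which is precisely the observation that the genus (and hence $\Theta_C$) is a birational invariant, constant on the fibers of Lemma \ref{setlem}.
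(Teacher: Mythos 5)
Your argument is correct and is exactly the reasoning the paper has in mind: the paper states this lemma as ``immediate'' from Lemma \ref{setlem} together with the Riemann--Hurwitz genus formula \eqref{genform}, which is precisely your observation that $\F^{gen}_r(g)$ is the preimage of $\Hh^{gen}_r(g)$ under the $\varphi(r):1$ surjection $F\mapsto C_{F,r}$, that the genus (hence $\Theta_C$) is constant on fibers, and that the averages therefore agree. No gaps; your write-up just makes explicit what the paper leaves unstated.
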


\begin{rem}\label{thingenformrem}

We see that $\Hh^{gen}_r(g)$ is non-empty if and only if there is a solution to
$$2g = \sum_{s|r} (r-s)d_s$$
which will have a solution if and only if $g$ is sufficiently large and $\gcd_{s|r}(r-s)|2g$. While this condition can likely be dealt with, it illustrates the difficulties that can arise in working with $\Hh^{gen}_r(g)$, of which there are more that are more cumbersome to handle.

\end{rem}

\subsection{Thin Family}\label{thinfam}

Now if we consider the subset of $\F_r$
\begin{align}\label{polythin}
\F^{thin}_r = \left\{ F\in\F_r : F=\alpha \prod_{ (i,r)=1} f_i^i, \alpha\in\Ff_q^*/(\Ff_q^*)^r \right\}
\end{align}
then we see that the genus formula for curves $C_{F,r}$ with $F\in\F^{thin}_r$ simplifies to
\begin{align}\label{thingenform}
2g+2r-2 = (r-1)\sum_{(i,r)=1} \deg(f_i) + r-(r,\deg(F)).
\end{align}
And so we define the set
\begin{align}\label{polythingen}
\F^{thin}_r(g) = \left\{ F\in\F^{thin}_r : 2g+2r-2 = (r-1)\sum_{(i,r)=1} \deg(f_i) + r-(r,\deg(F)) \right\}.
\end{align}
Reducing the genus formula modulo $r-1$, we find that
\begin{align}\label{redmodr-1}
2g \equiv r-(r,\deg(F)) \equiv 1 - (r,\deg(F)) \mod{r-1}.
\end{align}
Hence we have that $\F^{thin}_r(g)\not=\emptyset$ if and only if $2g\equiv 1-s \mod{r-1}$ for some $s|r$, in which case we would necessarily have $(r,\deg(F))=s$.

This leads to an immediate lemma.
\begin{lem}\label{gensetlem}
If $2g\equiv 1-s \mod{r-1}$ for some $s|r$ then there is a $\varphi(r):1$ surjective map between the two sets $\F^{thin}_r(g)$ and $\Hh^{thin}_r(g)$. In particular
$$\langle \Tr(\Theta_C^n)\rangle_{\Hh^{thin}_r(g)} = \langle \Tr(\Theta_{C_{F,r}}^n) \rangle_{F^{thin}_r(g)}.$$
\end{lem}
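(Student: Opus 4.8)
The plan is to upgrade the $\varphi(r):1$ correspondence $F\mapsto C_{F,r}$ of Lemma~\ref{setlem} to a $\varphi(r):1$ surjection from $\F^{thin}_r(g)$ onto $\Hh^{thin}_r(g)$ having the \emph{same} fibres, and then extract the identity of averages formally. Throughout I use that $\Tr(\Theta_{C}^n)$ is an isomorphism invariant of $C$ (it is read off from $Z_C$, $L_C$ and \eqref{Frob}) and hence is constant along these fibres. \textbf{Well-definedness.} If $F=\alpha\prod_{(i,r)=1}f_i^i\in\F^{thin}_r(g)$, then since $\F^{thin}_r(g)\subseteq\F_r$, Lemma~\ref{setlem} gives $C_{F,r}\in\Hh_r$, and $Y^r=F$ already displays a thin affine model of $C_{F,r}$. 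Feeding this $F$ into the Riemann--Hurwitz formula \eqref{genform} yields exactly the relation \eqref{thingenform}, which holds because $F\in\F^{thin}_r(g)$; hence $g(C_{F,r})=g$ and $C_{F,r}\in\Hh^{thin}_r(g)$.

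\textbf{Surjectivity.} Conversely, let $(C,\pi)\in\Hh^{thin}_r(g)$ and fix a thin affine model $Y^r=F_0$ with $F_0=\alpha\prod_{(i,r)=1}g_i^i$, the $g_i$ monic, squarefree and pairwise coprime. Then $\Ff_q(C)=\Ff_q(X)(F_0^{1/r})$, so $C\cong C_{F_0,r}$ compatibly with the maps to $\Pp^1$. Every prime divides $F_0$ to an exponent coprime to $r$, so $F_0$ is $r^{th}$-power free and is not an $s$-th power for any $s\mid r$, $s\neq1$ (the degenerate case where all $g_i=1$ is impossible, as \eqref{thingenform} would then force $g<0$); thus $F_0\in\F^{thin}_r$, and $g(C_{F_0,r})=g$ forces $F_0\in\F^{thin}_r(g)$ via \eqref{thingenform}. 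Hence the restricted map is onto $\Hh^{thin}_r(g)$.

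\textbf{The fibres are the full $\varphi(r)$-element fibres (the crux).} By Lemma~\ref{setlem} and the description of birational equivalence preceding it, the fibre of $F\mapsto C_{F,r}$ over $C_{F,r}$ inside $\F_r$ has exactly $\varphi(r)$ elements, each the $\F_r$-reduction of $F^{\,j}$ for some $j$ coprime to $r$. I would check each such reduction is again thin: writing $F=\alpha\prod_{(i,r)=1}f_i^i$, the reduction of $F^{\,j}$ modulo $r$-th powers in $\Ff_q(X)^{*}$ equals $\alpha^{\,j\bmod r}\prod_{(i,r)=1}f_i^{\,ij\bmod r}$, and since multiplication by the unit $j$ permutes $(\Z/r\Z)^{*}$ the distinct nontrivial $f_i$ land at distinct exponents, all coprime to $r$; grouping, this is $\beta\prod_{(k,r)=1}h_k^{\,k}$ with $h_k:=f_{kj^{-1}\bmod r}$ squarefree, i.e.\ thin. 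Since the genus is a birational invariant, all $\varphi(r)$ fibre elements then lie in $\F^{thin}_r(g)$. Therefore $\F^{thin}_r(g)$ is precisely the preimage of $\Hh^{thin}_r(g)$, and the restricted map is $\varphi(r):1$ and surjective.

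\textbf{Conclusion.} Under the hypothesis, the discussion preceding the lemma shows $\F^{thin}_r(g)\neq\emptyset$, hence also $\Hh^{thin}_r(g)\neq\emptyset$, and a $\varphi(r):1$ surjection gives $|\F^{thin}_r(g)|=\varphi(r)\,|\Hh^{thin}_r(g)|$. As $\Tr(\Theta^n_{C_{F,r}})$ is constant on fibres,
$$\sum_{F\in\F^{thin}_r(g)}\Tr(\Theta^n_{C_{F,r}})=\varphi(r)\sum_{C\in\Hh^{thin}_r(g)}\Tr(\Theta^n_{C}).$$
Dividing by $|\F^{thin}_r(g)|$ gives $\langle\Tr(\Theta^n_{C_{F,r}})\rangle_{\F^{thin}_r(g)}=\langle\Tr(\Theta^n_{C})\rangle_{\Hh^{thin}_r(g)}$, as claimed. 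The one step carrying real content is the third: verifying that Kummer twisting $F\mapsto F^{\,j}$ preserves the thin shape, equivalently that nothing outside $\F^{thin}_r(g)$ maps into $\Hh^{thin}_r(g)$; everything else is bookkeeping with \eqref{genform}.
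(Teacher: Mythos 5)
Your proof is correct and follows the route the paper intends: it treats the lemma as the restriction of the $\varphi(r):1$ map of Lemma~\ref{setlem} to the thin sets, which the paper leaves as ``immediate'' without writing out a proof. Your verification of the one genuinely non-obvious point --- that the Kummer twists $F\mapsto F^{j}$, $(j,r)=1$, permute the exponents within $(\Z/r\Z)^{*}$ and hence keep the whole $\varphi(r)$-element fibre inside $\F^{thin}_r(g)$ --- is exactly the detail needed to justify that the restricted map is still $\varphi(r):1$ and surjective, and the averaging identity then follows as you state.
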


\begin{rem}\label{redrem}

From now on, we will restrict to the case $2g\equiv 0 \mod{r-1}$. We see from \eqref{redmodr-1} that this is equivalent to restricting the $F\in\F^{thin}_r$ to $(\deg(F),r)\equiv 1 \mod{r-1}$ which is then equivalent to saying $(\deg(F),r) = 1$ or $r$.

\end{rem}

%\subsection{Conductor Formula}\label{condformsect}
%
%Let $F\in\F_r$. Then the conductor of $C_{F,r}$, $\N(C_{F,r})$, will be the product of all the primes that ramify in $\Ff_q[X](F^{1/r})$. Hence this will be all the primes dividing $F$ plus the prime at infinity if $r\not|\deg(F)$. That is,
%\begin{align}\label{condform}
%\deg(\N(C_{F,r})) = \deg(\rad(F)) + \begin{cases} 0 & r|\deg(F) \\ 1 & \mbox{otherwise} \end{cases}.
%\end{align}
%Therefore, if we define
%\begin{align}\label{polycond}
%\F^{cond}_r(N) = \left\{ F\in \F_r : N = \deg(\rad(F)) + \begin{cases} 0 & r|\deg(F) \\ 1 & \mbox{otherwise} \end{cases} \right\}
%\end{align}
%then we get the immediate lemma.
%
%\begin{lem}\label{condsetlem}
%There is a $\varphi(r):1$ surjective map between the two sets $\F^{cond}_r(N)$ and $\Hh^{cond}_r(N)$. In particular
%$$\langle \Tr(\Theta_C^n)\rangle_{\Hh^{cond}_r(N)} = \langle \Tr(\Theta_{C_{F,r}}^n) \rangle_{F^{cond}_r(N)}.$$
%\end{lem}

\section{Trace Formula}\label{trformsec}

\subsection{Characters}\label{charsect}
Before we develop a formula for $\Tr(\Theta_C^n)$, we will need notation for certain characters. For any $n$ let $\beta\in\Ff_{q^n}^*$ be a generator. Then for any $s|r$, we define the character on $\Ff_{q^n}$ by
\begin{align}\label{char}
\chi_{s;n}(\alpha) = \begin{cases} \xi_s^t & \alpha = \beta^t \\ 0 & \alpha=0 \end{cases}
\end{align}
where $\xi_s$ is some primitive $s^{th}$ root of unity. This is well defined since we are assuming $q\equiv 1 \mod{s}$.

Further, if $P\in\Ff_q[X]$ is any irreducible polynomial with $\alpha$ as a root, then we define the Legendre symbol of $F\in\Ff_q[X]$ mod $P$ as
\begin{align}\label{legen}
\left(\frac{F}{P}\right)_s := \chi_{s;\deg(P)}(F(\alpha)).
\end{align}

\begin{lem}\label{charlem}
Let $\alpha\in\Ff_{q^n}$ and let $P_{\alpha}$ be its minimal polynomial. Then
$$\chi_{s;n}(F(\alpha)) = \left(\frac{F}{P_{\alpha}}\right)_s^{n/\deg(P_{\alpha})}.$$
\end{lem}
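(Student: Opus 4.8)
The plan is to compare the two sides as functions of $\alpha$ by tracking how the Frobenius acts. Let $d = \deg(P_\alpha)$, so that $\alpha \in \Ff_{q^d} \subseteq \Ff_{q^n}$ and necessarily $d \mid n$. Write $m = n/d$. First I would recall that the Legendre symbol $\left(\tfrac{F}{P_\alpha}\right)_s$ is by definition $\chi_{s;d}(F(\alpha))$, using $\alpha$ as the chosen root of $P_\alpha$; so the claim is precisely the identity $\chi_{s;n}(F(\alpha)) = \chi_{s;d}(F(\alpha))^{m}$, i.e.\ that the characters $\chi_{s;n}$ and $\chi_{s;d}$ are compatible via the norm map. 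The key fact is that for $\gamma \in \Ff_{q^d}^* \subseteq \Ff_{q^n}^*$ one has $\chi_{s;n}(\gamma) = \chi_{s;d}\big(\mathrm{N}_{\Ff_{q^n}/\Ff_{q^d}}(\gamma)\big) = \chi_{s;d}(\gamma^{m})$ because the norm of an element of the subfield is just its $m$-th power, and then $\chi_{s;d}(\gamma^m) = \chi_{s;d}(\gamma)^m$ since $\chi_{s;d}$ is multiplicative. Applying this with $\gamma = F(\alpha)$ (noting $F(\alpha) \in \Ff_{q^d}$) gives the lemma, provided $F(\alpha) \neq 0$; the case $F(\alpha)=0$ is trivial since both sides vanish.

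Concretely, I would argue as follows. Fix a generator $\beta$ of $\Ff_{q^n}^*$; then $\beta_0 := \beta^{(q^n-1)/(q^d-1)}$ is a generator of $\Ff_{q^d}^*$. Writing $F(\alpha) = \beta_0^{\,t}$ for some $t$, we have $F(\alpha) = \beta^{\,t(q^n-1)/(q^d-1)}$ as an element of $\Ff_{q^n}^*$. By the definitions \eqref{char}, $\chi_{s;d}(F(\alpha)) = \xi_s^{\,t}$ while $\chi_{s;n}(F(\alpha)) = \xi_s^{\,t(q^n-1)/(q^d-1)}$. So it remains to check that $\xi_s^{\,(q^n-1)/(q^d-1)} = \xi_s$, i.e.\ that $(q^n-1)/(q^d-1) \equiv 1 \pmod{s}$. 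Since $q \equiv 1 \pmod s$ (our standing assumption, which makes all these characters well defined), each term in $(q^n-1)/(q^d-1) = 1 + q^d + q^{2d} + \cdots + q^{(m-1)d}$ is $\equiv 1 \pmod s$, and there are $m = n/d$ of them, giving $(q^n-1)/(q^d-1) \equiv m \pmod s$. Hence $\chi_{s;n}(F(\alpha)) = \xi_s^{\,tm} = \big(\xi_s^{\,t}\big)^m = \chi_{s;d}(F(\alpha))^{m} = \left(\tfrac{F}{P_\alpha}\right)_s^{\,n/\deg(P_\alpha)}$, as claimed.

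The only genuine subtlety — the ``hard part,'' such as it is — is the bookkeeping that $\chi_{s;n}$ restricted to the subfield $\Ff_{q^d}^*$ really does agree with $\chi_{s;d}$ up to the $m$-th power; this hinges on the congruence $(q^n-1)/(q^d-1)\equiv n/d \pmod s$, which is exactly where the hypothesis $q\equiv 1\pmod r$ (hence $\pmod s$) is used. One should also note that the characters depend on the auxiliary choices of generator $\beta$ and primitive root $\xi_s$, but the Legendre symbol $\left(\tfrac{F}{P_\alpha}\right)_s$ is defined with the \emph{same} choices entering $\chi_{s;\deg(P_\alpha)}$, so these choices are harmless here; alternatively one can phrase the whole argument intrinsically through the norm map $\mathrm{N}_{\Ff_{q^n}/\Ff_{q^d}}$ and avoid mentioning generators at all. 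Either way the proof is short.
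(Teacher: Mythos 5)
Your proposal is correct and follows essentially the same route as the paper: choose a generator $\beta$ of $\Ff_{q^n}^*$, note that $\beta^{(q^n-1)/(q^d-1)}$ generates $\Ff_{q^d}^*$, and use $q\equiv 1\pmod{s}$ to get $(q^n-1)/(q^d-1)\equiv n/d\pmod{s}$, with the $F(\alpha)=0$ case handled trivially. The norm-map framing and the explicit geometric-series justification are just slightly more detailed packaging of the identical computation.
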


\begin{proof}
If $P_{\alpha}|F$ this is trivially true as both sides are $0$. So now, assume $P_{\alpha}\nmid F$.

Suppose $\deg(P_{\alpha})=d|n$ so that $\alpha\in\Ff_{q^d}$. Let $\beta$ be a generator of $\Ff_{q^n}^*$. Therefore $\beta^{\frac{q^n-1}{q^d-1}}$ is a generator of $\Ff_{q^d}^*$. Then $F(\alpha) = \beta^{\frac{q^n-1}{q^d-1}t}$ for some $t$ and hence
$$\chi_{s;n}(F(\alpha)) = \xi_s^{\frac{q^n-1}{q^d-1}t} = \xi_s^{\frac{n}{d}t} =\left(\frac{F}{P_{\alpha}}\right)_s^{n/\deg(P_{\alpha})} $$
where the second equality is obtained from the fact the $q\equiv 1 \mod{s}$ and so $\frac{q^n-1}{q^d-1}\equiv \frac{n}{d} \mod{s}$.
\end{proof}

%We then extend this to any polynomial $G=\prod P_i^{v_i}$ by setting
%\begin{align*}
%\left(\frac{F}{G}\right)_s = \prod \left(\frac{F}{P_i}\right)_s^{v_i}.
%\end{align*}
If we denote $P_{\infty}$ as the prime at infinity, then we can define an analogous character modulo the prime at infinity:
\begin{align}\label{charP_inf}
\left(\frac{F}{P_{\infty}}\right)_s = \begin{cases} \chi_{s;1}(\mbox{leading coefficient of $F$}) & s|\deg(F) \\ 0 & \mbox{otherwise} \end{cases}.
\end{align}
In particular, we set $\deg(P_\infty)=1$ and with a similar proof as in Lemma \ref{charlem} we get that if $s|\deg(F)$,
\begin{align}\label{charP_inf2}
\chi_{s;n}(\mbox{leading coefficient of $F$}) = \left( \frac{F}{P_{\infty}}\right)_s^n = \left(\frac{F}{P_{\infty}}\right)_s^{\frac{n}{\deg(P_{\infty})}}.
\end{align}
%Finally, we just note here that since we are always assuming $q\equiv 1 \mod{2r}$, we have the following reciprocity relation
%\begin{align}\label{recipr}
%\left(\frac{F}{G}\right)_s = \left(\frac{G}{F}\right)_s
%\end{align}
%for all $s|r$.

\subsection{Trace Formula}\label{trformsubsec}

From \eqref{zetafunc},\eqref{Lfunc} and \eqref{Frob}, we have that
\begin{align}
\exp\left(\sum_{n=1}^{\infty} \#C(\Ff_{q^n}) \frac{u^n}{n} \right) = \frac{ \det(1-\sqrt{q}u\Theta_C) }{(1-u)(1-qu)}.
\end{align}
Taking logs and equating coefficients of $u$, we get the well known identity
\begin{align}\label{numptsform}
\#C(\Ff_q^n) = q^n+1-q^{n/2}\Tr(\Theta_C^n).
\end{align}
Therefore, to obtain a formula to $\Tr(\Theta_C^n)$ it is enough to obtain a formula for $\#C(\Ff_q^n)$.

This was done in \cite{M1} in the case $n=1$. However, the same method applies for arbitrary $n$. That is, for any
$$F = \prod_{i=1}^{r-1}f_i^i\in\F_r$$
then we define for any $s|r$
$$F_{(s)} = \prod_{i=1}^{s-1} \prod_{j=1}^{r/s-1} f_{sj+i}^i$$
and we can write $\#C(\Ff_{q^n})$ in terms of $F_{(s)}$.

\begin{lem}\label{numptslem}
If $C=C_{F,r}$ for some $F\in\F_r$, then
$$\#C(\Ff_q^n) =q^n+1+ \sum_{\alpha\in\Pp^1_{\Ff_{q^n}}}\sum_{s|r}\sum_{\substack{i=1 \\ (i,s)=1}}^{s-1}  \chi^i_{s;n}(F_{(s)}(\alpha))$$
where $\chi_{s;n}$ is defined in \eqref{char} and if we denote $\infty$ as the point at infinity, then
$$F_{(s)}(\infty) := \begin{cases} \mbox{leading coefficient of $F$} & \deg(F)\equiv 0 \mod{s} \\ 0 & \mbox{otherwise}  \end{cases}.$$
\end{lem}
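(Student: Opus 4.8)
The plan is to count points on $C = C_{F,r}$ by using the description of its function field as $\Ff_q(X)(F^{1/r})$ and decomposing the Kummer cover into its constituent characters. For a point $\alpha \in \Pp^1_{\Ff_{q^n}}$ that is neither a zero of $F$ nor (if relevant) the point at infinity with bad behavior, the number of points of $C$ lying above $\alpha$ in $C(\Ff_{q^n})$ is governed by whether $F(\alpha)$ is an $r$-th power, an $(r/s)$-th power but not higher, etc., in $\Ff_{q^n}^*$. The standard character-sum identity is that the number of solutions $Y \in \Ff_{q^n}$ to $Y^r = F(\alpha)$ equals $\sum_{i=0}^{r-1} \chi_{r;n}^i(F(\alpha))$ whenever $q^n \equiv 1 \bmod r$, which holds here since $q \equiv 1 \bmod r$. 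So the first step is to write $\#C(\Ff_{q^n}) = \sum_{\alpha} \sum_{i=0}^{r-1}\chi_{r;n}^i(F(\alpha))$ (with the $\infty$ contribution handled separately), isolating the $i=0$ term as $q^n+1$ (the number of $\alpha$'s in $\Pp^1_{\Ff_{q^n}}$), and being careful about the ramified points where $F(\alpha)=0$ — but there the fiber has exactly one point, which matches $\sum_{i=0}^{r-1}\chi^i_{r;n}(0) = 1$ under the convention $\chi^0 \equiv 1$ on all of $\Ff_{q^n}$ including $0$... actually one must check this convention carefully, since $\chi_{r;n}(0)=0$; the cleanest route is to note $\#\{Y : Y^r = 0\} = 1$ and separately that $\sum_{i=1}^{r-1}\chi_{r;n}^i(0)=0$, so the formula $\#(\text{fiber}) = 1 + \sum_{i=1}^{r-1}\chi^i_{r;n}(F(\alpha))$ is uniformly valid.

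The second step is the arithmetic heart: for each $i$ with $1 \le i \le r-1$, rewrite $\chi_{r;n}^i(F(\alpha))$ in terms of the characters $\chi_{s;n}$ of exact order $s = r/(r,i)$ and the polynomials $F_{(s)}$. Writing $F = \prod_{j=1}^{r-1} f_j^j$, we have $\chi_{r;n}^i(F(\alpha)) = \prod_j \chi_{r;n}^{ij}(f_j(\alpha))$, and $\chi_{r;n}^{ij}$ depends only on $ij \bmod r$. Grouping the indices $j$ according to which primitive-order character $\chi_{s;n}$ (for $s \mid r$) the map $j \mapsto \chi_{r;n}^{ij}$ induces, one collects exactly the factors that constitute $F_{(s)} = \prod_{i'=1}^{s-1}\prod_{t=1}^{r/s-1} f_{st+i'}^{i'}$. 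The combinatorial bookkeeping is that summing $\sum_{i=1}^{r-1}\chi^i_{r;n}(F(\alpha))$ regroups, via $s = r/(r,i)$ and letting $i$ run over residues coprime-after-scaling, into $\sum_{s \mid r}\sum_{(i,s)=1, 1\le i \le s-1}\chi^i_{s;n}(F_{(s)}(\alpha))$. I would verify this by checking that for a fixed $\alpha$, the multiset $\{\chi^i_{r;n}(F(\alpha)) : 1 \le i \le r-1\}$ matches $\{\chi^i_{s;n}(F_{(s)}(\alpha)) : s\mid r,\ (i,s)=1,\ 1\le i\le s-1\}$ term by term — this follows because $i \mapsto (s, i \bmod \text{appropriate modulus})$ sets up the bijection on index sets, and the exponent $i$ in $f_{st+i'}^{i'}$ exactly records $i' = (ij \bmod r)/(r,i)$ type contributions.

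The third step is to handle the point at infinity. A change of variables $X \mapsto 1/X$ shows the fiber over $\infty$ depends on $F(\infty)$ interpreted as the leading coefficient of $F$ when $\deg F \equiv 0 \bmod$ the relevant order, and is ramified (single point) otherwise; this is exactly the convention $F_{(s)}(\infty) = $ leading coefficient of $F$ if $s \mid \deg F$ and $0$ otherwise, using $\deg F_{(s)} \equiv \deg F \pmod{s}$ which one checks from the definition of $F_{(s)}$. Extending the sum over $\alpha \in \Pp^1_{\Ff_{q^n}}$ to include $\infty$ with this convention then absorbs the infinite place uniformly, and the constant $q^n + 1 = \#\Pp^1_{\Ff_{q^n}}(\Ff_{q^n})$ comes out of the $i=0$ / trivial-character count. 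Assembling the three steps gives the claimed formula.

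The main obstacle I expect is the bookkeeping in the second step: verifying that the regrouping of $\sum_{i=1}^{r-1}\chi^i_{r;n}(F(\alpha))$ genuinely produces $\sum_{s\mid r}\sum_{(i,s)=1}\chi^i_{s;n}(F_{(s)}(\alpha))$ with the correct powers $f_{sj+i'}^{i'}$ inside $F_{(s)}$ — this requires being precise about how the exponent $j$ of $f_j$ interacts with the scaling $i \mapsto i/(r,i)$ and the reduction modulo $s$, and it is easy to be off by a unit or to miscount which $f_j$ lands in which $F_{(s)}$. Everything else (the elementary character-sum count of fibers, the $X\mapsto 1/X$ analysis at infinity) is routine once this indexing is pinned down. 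I would also double-check that $\#C(\Ff_{q^n})$ on the left really is the smooth projective model's point count, i.e. that the singular affine model $Y^r = F(X)$ and the smooth curve $C_{F,r}$ have the same number of $\Ff_{q^n}$-points up to the contributions already accounted for at the ramified and infinite places — this is where the assumption $F \in \F_r$ (so $F$ is $r$-th-power free and not a proper power) is used to ensure $C_{F,r}$ is genuinely a degree-$r$ cover and the normalization does not introduce discrepancies beyond those bundled into the stated conventions.
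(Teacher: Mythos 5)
Your skeleton (fiber counts via character sums, regrouping over $s\mid r$, a convention at infinity) is the natural one — the paper itself offers no argument beyond citing \cite{M1} for $n=1$ and noting the method extends — but your proposal has a genuine gap at the ramified points, which is exactly where the content of the lemma lies. In Step 1 you assert that over a point $\alpha$ with $F(\alpha)=0$ the smooth curve $C_{F,r}$ has exactly one $\Ff_{q^n}$-point. That is true only when $(v_\alpha(F),r)=1$. A general $F\in\F_r$ has exponents $j=1,\dots,r-1$, and when $r$ is composite and $d=(j,r)>1$ the place $\alpha$ has ramification index $r/d$ with $d$ places above it, rational or not according to whether the unit part of $F$ at $\alpha$ is a suitable power: for $r=4$ and $F=u\,t^2$ locally, the fiber has $2$ points if $u(\alpha)$ is a square in $\Ff_{q^n}$ and $0$ points otherwise — never $1$. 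Correspondingly, the term-by-term multiset matching you propose in Step 2 is false at exactly these points: for $i$ with $r/(r,i)=s$ and $s\mid v_\alpha(F)$ one has $\chi_{r;n}^i(F(\alpha))=0$ (because $F(\alpha)=0$), while $\chi_{s;n}^{i'}(F_{(s)}(\alpha))\neq 0$ (because $F_{(s)}$ omits $f_j$ when $s\mid j$). This mismatch is not an indexing nuisance: it is precisely the difference between counting points on the singular affine model $Y^r=F(X)$ and on its normalization $C_{F,r}$, i.e.\ precisely the reason the lemma is stated with $F_{(s)}$ rather than $F$. The final formula happens to be correct, and your two false intermediate claims would "cancel" numerically, but as written neither step is true, so the argument does not establish the lemma (it is vacuously fine only in situations like the thin family, where all exponents of $F$ are coprime to $r$).

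The repair is to bypass the affine count entirely and compute, for each place $P_\alpha$ of $\Ff_{q^n}(X)$ (including $P_\infty$), the number of degree-one places of $\Ff_{q^n}(C)$ above it via the cyclic Galois group of order $r$: that number is $r/e$ if the residue degree is $1$ and $0$ otherwise, which equals $\sum_{\chi}\chi^*(\Frob_{P_\alpha})$, the sum over all $r$ characters of the group with $\chi^*$ the associated primitive character and ramified characters contributing $0$. One then checks that for the order-$s$ character $\chi_r^{(r/s)i}$, $(i,s)=1$, the primitive value at $P_\alpha$ is exactly $\chi_{s;n}^i(F_{(s)}(\alpha))$ — here $F_{(s)}$, whose exponents are the residues mod $s$ of those of $F$, is nonvanishing at $\alpha$ precisely when $s\mid v_\alpha(F)$, i.e.\ precisely when that character is unramified at $\alpha$ — and that the stated convention at infinity (using your correct observation $\deg F_{(s)}\equiv\deg F \pmod s$ and the leading coefficient) plays the same role there. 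Summing the resulting fiber formula $1+\sum_{s\mid r}\sum_{(i,s)=1}\chi_{s;n}^i(F_{(s)}(\alpha))$ over the $q^n+1$ points of $\Pp^1_{\Ff_{q^n}}$ gives the lemma; your Step 3 and the trivial-character count are fine once this is in place.
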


Therefore, combining Lemmas \ref{charlem} and \ref{numptslem}, we obtain a formula for $\Tr(\Theta_C^n)$.

\begin{lem}\label{trformlem}
If $C=C_{F,r}$ for some $F\in\F_r$, then
$$-q^{n/2}\Tr(\Theta_C^n) =   \sum_{s|r} \sum_{\substack{i=1 \\ (i,s)=1}}^{s-1} \sum_{\deg(P)|n} \deg(P) \left(\frac{F_{(s)}}{P} \right)_s^{\frac{in}{d}} $$
where the sum is over prime polynomials, including the prime at infinity $P_{\infty}$.
\end{lem}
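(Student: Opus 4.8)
The plan is to combine the two preceding lemmas and turn the resulting sum over points of $\Pp^1_{\Ff_{q^n}}$ into a sum over prime polynomials of degree dividing $n$. Starting from Lemma \ref{numptslem} and equation \eqref{numptsform}, we have
\[
-q^{n/2}\Tr(\Theta_C^n) = \sum_{\alpha\in\Pp^1_{\Ff_{q^n}}}\sum_{s|r}\sum_{\substack{i=1\\(i,s)=1}}^{s-1}\chi_{s;n}^i(F_{(s)}(\alpha)),
\]
so the only task is to reorganize the inner sum over $\alpha$. First I would swap the order of summation so that the sum over $\alpha$ is innermost, and handle the point at infinity separately from the affine points using \eqref{charP_inf2}: for $\alpha=\infty$ the contribution is $(F_{(s)}/P_\infty)_s^{in}= (F_{(s)}/P_\infty)_s^{in/\deg(P_\infty)}$ since $\deg(P_\infty)=1$, which matches the claimed summand with $P=P_\infty$ and $d=1$.

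For the affine points, the key step is the standard bijection: an element $\alpha\in\Ff_{q^n}$ is grouped according to its minimal polynomial $P_\alpha$, which is an irreducible polynomial over $\Ff_q$ with $d:=\deg(P_\alpha)\mid n$, and conversely each irreducible $P$ with $\deg(P)\mid n$ has exactly $\deg(P)$ roots in $\Ff_{q^n}$ (its full set of conjugates). By Lemma \ref{charlem}, for each such $\alpha$ we have $\chi_{s;n}^i(F_{(s)}(\alpha)) = \left(\frac{F_{(s)}}{P_\alpha}\right)_s^{in/\deg(P_\alpha)}$, and crucially this value depends only on $P_\alpha$, not on which of its $\deg(P_\alpha)$ conjugate roots we chose — because the right-hand side is defined purely in terms of $P_\alpha$. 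Hence summing over the $\deg(P)$ roots $\alpha$ with $P_\alpha = P$ simply multiplies by $\deg(P)$, giving
\[
\sum_{\substack{\alpha\in\Ff_{q^n}\\ P_\alpha = P}}\chi_{s;n}^i(F_{(s)}(\alpha)) = \deg(P)\left(\frac{F_{(s)}}{P}\right)_s^{in/\deg(P)}.
\]
Summing over all irreducible $P$ with $\deg(P)\mid n$, reinstating the sums over $s\mid r$ and over $i$ coprime to $s$, and adding back the infinite-prime term, yields exactly the claimed formula with $d=\deg(P)$.

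I would also note the harmless inclusion of primes $P\mid F_{(s)}$: for those $P$ both sides vanish (the Legendre symbol is $0$, and $\chi_{s;n}(F_{(s)}(\alpha))=0$ at the corresponding roots), so they contribute nothing and may be kept in the sum for notational convenience. The main obstacle — really the only point requiring care — is verifying that the summand is genuinely a class function of $P_\alpha$, i.e. that $\left(\frac{F_{(s)}}{P}\right)_s$ is well-defined independently of the chosen root of $P$ and of the chosen generator $\beta$ of $\Ff_{q^{\deg P}}^*$; this is precisely the content already packaged into Lemma \ref{charlem} and the definition \eqref{legen}, so once those are invoked the proof is a bookkeeping exercise. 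The edge case $d\nmid n$ never arises since we only sum over $\deg(P)\mid n$, and the case $n$ such that some $\alpha$ lies in a proper subfield is automatically absorbed by the minimal-polynomial grouping.
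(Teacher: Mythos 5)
Your proof is correct and follows essentially the same route as the paper: combine Lemma \ref{numptslem} with \eqref{numptsform}, convert each $\chi_{s;n}^i(F_{(s)}(\alpha))$ to a power of the Legendre symbol via Lemma \ref{charlem} (and \eqref{charP_inf2} at infinity), and group the points of $\Pp^1_{\Ff_{q^n}}$ by their minimal polynomials, each prime of degree dividing $n$ being counted $\deg(P)$ times. Your extra remarks on root-independence and on primes dividing $F_{(s)}$ are just a more explicit account of steps the paper leaves implicit.
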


\begin{proof}

Lemma \ref{numptslem} and \eqref{numptsform} tell us that
$$-q^{n/2}\Tr(\Theta_C^n) = \sum_{s|r}\sum_{\substack{i=1 \\ (i,s)=1}}^{s-1} \sum_{\alpha\in\Pp^1_{\Ff_q^n}}  \chi^i_{s,n}(F_{(s)}(\alpha))  $$
Now if $\alpha\in\Pp^1_{\Ff_{q^n}}$ then Lemma \ref{charlem} (or \eqref{charP_inf2} for $\alpha=\infty$)  tells us that
$$\chi^i_s(F_{(s)}(\alpha)) = \left(\frac{F_{(s)}}{P_{\alpha}}\right)_s^{\frac{in}{\deg(P_{\alpha})}}$$
where $P_{\alpha}$ is the minimal polynomial of $\alpha$. Finally, as $\alpha$ runs over all $\Pp^1_{\Ff_{q^n}}$, then the primes runs over all primes with degree divisible by $n$, weighted by their degree.

%Now, for the point at infinity, we get the contribution to $-q^{n/2}\Tr(\Theta_C^n)$ will be
%\begin{align*}
%\sum_{s|r}\sum_{\substack{i=1 \\ (i,s)=1}}^{s-1}  \chi^i_s(F_{(s)}(\infty)) & = \sum_{s|(\deg(F),r)}\sum_{\substack{i=1 \\ (i,s)=1}}^{s-1} \chi^i_s(\ell.c(F)) \\
%& = \begin{cases} (\deg(F),r)-1 & \ell.c.(F) \in \Ff_q^{(\deg(F),r)} \\ -1 & \mbox{otherwise}   \end{cases}
%\end{align*}
\end{proof}

\subsection{Trace Formula for Thin Set}

We note that if $F\in\F^{thin}_r$, then we get that $F_{(s)} = FH^s$ for some $H\in\Ff_q[X]$ with the property that $\rad(H)|\rad(F)$. Hence, we get
$$\left(\frac{F_{(s)}}{P}\right)_s = \left(\frac{F}{P}\right)_s = \left(\frac{F}{P}\right)_r^{r/s}.$$
Therefore, we get a slightly easier formula for the trace for elements of the thin set.

\begin{lem}\label{trformthinlem}

If $C=C_{F,r}$ for some $F\in\F^{thin}_r$, then
$$-q^{n/2} \Tr(\Theta_C^n) = \sum_{i=1}^{r-1} \sum_{\deg(P)|n} \deg(P) \left(\frac{F}{P}\right)^{\frac{in}{\deg(P)}}_r$$
where the sum is over prime polynomials, including the prime at infinity $P_{\infty}$.
\end{lem}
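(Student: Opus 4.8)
The plan is to reduce everything to Lemma~\ref{trformlem} by converting the $s$-th power residue symbols appearing there into powers of a single $r$-th power residue symbol, and then collapsing the resulting double sum over divisors of $r$ into one sum running from $1$ to $r-1$.

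First I would record the base change for residue symbols established in the paragraph preceding the statement: for $F\in\F^{thin}_r$ one has $F_{(s)}=FH^s$ for a polynomial $H$ with $\rad(H)\mid\rad(F)$, so for every prime $P$ (finite, or the prime at infinity $P_\infty$)
$$\left(\frac{F_{(s)}}{P}\right)_s=\left(\frac{F}{P}\right)_s=\left(\frac{F}{P}\right)_r^{r/s},$$
where we fix the primitive roots of unity of \eqref{char} compatibly, $\xi_s=\xi_r^{r/s}$, which is admissible since each $\xi_s$ was only required to be \emph{some} primitive $s$-th root of unity. Plugging this into Lemma~\ref{trformlem} and writing $d=\deg(P)$ (recall $d\mid n$, so all exponents below are integers) gives
$$-q^{n/2}\Tr(\Theta_C^n)=\sum_{s\mid r}\ \sum_{\substack{i=1\\(i,s)=1}}^{s-1}\ \sum_{\deg(P)\mid n}\deg(P)\left(\frac{F}{P}\right)_r^{\frac{r}{s}\cdot\frac{in}{d}}=\sum_{s\mid r}\ \sum_{\substack{i=1\\(i,s)=1}}^{s-1}\ \sum_{\deg(P)\mid n}\deg(P)\left(\frac{F}{P}\right)_r^{\frac{(ri/s)\,n}{d}}.$$

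The key combinatorial point is that $(s,i)\mapsto ri/s$ is a bijection from $\{(s,i):s\mid r,\ 1\le i\le s-1,\ (i,s)=1\}$ onto $\{1,2,\dots,r-1\}$: since $s\mid r$, the value $j:=ri/s$ is an integer with $1\le j\le\frac{r}{s}(s-1)=r-\frac{r}{s}\le r-1$; conversely $j\mapsto(s,i)$ with $s=r/(j,r)$ and $i=j/(j,r)$ produces a pair with $s\mid r$, $(i,s)=1$, $1\le i<s$ and $ri/s=j$, and these two assignments are mutually inverse. Re-indexing the double sum over $(s,i)$ by $j=ri/s$ therefore yields
$$-q^{n/2}\Tr(\Theta_C^n)=\sum_{j=1}^{r-1}\ \sum_{\deg(P)\mid n}\deg(P)\left(\frac{F}{P}\right)_r^{\frac{jn}{\deg(P)}},$$
which is exactly the claimed formula after renaming $j$ as $i$. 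The only steps with any content are this bijection and the compatibility of the $s$-th and $r$-th power residue symbols (including at $P_\infty$, where one invokes \eqref{charP_inf2}); everything else is substitution and bookkeeping, so I do not expect a serious obstacle.
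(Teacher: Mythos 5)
Your proof is exactly the paper's own (implicit) argument: plug the identity $\left(\frac{F_{(s)}}{P}\right)_s=\left(\frac{F}{P}\right)_r^{r/s}$, stated immediately before the lemma, into Lemma \ref{trformlem}, and collapse the double sum over $s\mid r$ and $(i,s)=1$ via the bijection $(s,i)\mapsto ri/s$ onto $\{1,\dots,r-1\}$ — the paper leaves the re-indexing and the compatible normalization $\xi_s=\xi_r^{r/s}$ implicit, and you have merely spelled them out. The one caveat, that at $P_\infty$ the identity compares the conditions $s\mid\deg F$ and $r\mid\deg F$ and so is only unconditionally valid when $(\deg F,r)\in\{1,r\}$ (the case to which the paper restricts from Remark \ref{redrem} onward), is inherited verbatim from the paper's own display, so your proposal matches the paper's proof.
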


\subsection{Main Term and Error Term}

Combining Lemma \ref{trformthinlem} with Section \ref{polysect}, we see that
\begin{align*}
\langle \Tr(\Theta_C^n) \rangle_{\Hh^{thin}_r(g)} = \langle \Tr(\Theta_{C_F}^n \rangle_{\F^{thin}_r(g)} =  \frac{-q^{-n/2}}{|\F^{thin}_r(g)|} \sum_{F\in\F^{thin}_r(g)} \sum_{i=1}^{r-1} \sum_{\deg(P)|n} \deg(P) \left(\frac{F}{P}\right)^{\frac{in}{\deg(P)}}_r.
\end{align*}

Hence, we will define
$$MT_r(g,n) :=  \frac{-q^{-n/2}}{|\F^{thin}_r(g)|} \sum_{\deg(P)|n} \deg(P)  \sum_{\substack{i=1 \\ r|\frac{in}{\deg(P)}}}^{r-1} \sum_{\substack{F\in\F^{thin}_r(g) \\ P\nmid F}} 1$$
and
$$ET_r(g,n):=  \frac{-q^{-n/2}}{|\F^{thin}_r(g)|} \sum_{\deg(P)|n} \deg(P) \sum_{\substack{i=1 \\ r\nmid \frac{in}{\deg(P)}}}^{r-1} \sum_{F\in\F^{thin}_r(g) } \left(\frac{F}{P}\right)_r^{in/m}. $$

\begin{rem}
We include the prime at infinity $P_{\infty}$ into our sum over primes in $MT_r(g,n)$ by saying $P_{\infty}|F$ if and only if $\deg(F)\not\equiv 0\mod{r}$.
\end{rem}

\section{Computing the Main Term}\label{mainterm}

\begin{thm}\label{maintermthm}

Assume $q\equiv1 \mod{r}$ and $2g\equiv 0\mod{r-1}$, then $MT_r(g,n)=0$ if $(r,n)=1$. Otherwise
$$MT_r(g,n) = \frac{-1}{q^{n/2}}\sum_{\substack{s|(r,n) \\ s\not=1 }} \phi(s)q^{n/s} - D_r(g,n) + O\left(\frac{n}{gq^{n/2}}\right)$$
where $D_r(g,n)$ is given by \eqref{D_r(g,n)}. In particular,
$$D_r(g,n) \ll \frac{n}{q^{n/2}}$$
\end{thm}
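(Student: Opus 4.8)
The plan is to evaluate $MT_r(g,n)$ by reducing it to the ``local densities'' of divisibility by a single prime in $\F^{thin}_r(g)$, and then extracting those densities from Euler products by contour integration.

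\emph{Reduction.} Inside $MT_r(g,n)$ the summand depends on $i$ only through the condition $r\mid\frac{in}{\deg P}$; writing $m=\deg P$, the number of admissible $i\in\{1,\dots,r-1\}$ is $(r,n/m)-1$. Hence, with $\delta_P(g):=|\{F\in\F^{thin}_r(g):P\mid F\}|\big/|\F^{thin}_r(g)|$ and the convention $P_\infty\mid F\iff r\nmid\deg F$,
\[
MT_r(g,n)=\frac{-1}{q^{n/2}}\sum_{m\mid n}\big((r,n/m)-1\big)\sum_{\deg P=m}\deg P\,\big(1-\delta_P(g)\big),
\]
the outer sum over finite primes together with $P_\infty$. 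If $(r,n)=1$ then $(r,n/m)=1$ for every $m\mid n$ and $MT_r(g,n)=0$; so assume $(r,n)>1$.

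\emph{The principal term.} Separating the ``$1$'' from ``$1-\delta_P$'' and using $\sum_{\deg P=m}\deg P=\sum_{e\mid m}\mu(m/e)q^e+[m=1]$ (the Iverson term accounting for $P_\infty$, of degree $1$), a short M\"obius computation --- write $(r,x)-1=\sum_{1<s\mid r,\ s\mid x}\phi(s)$, swap the order of summation, and read off the coefficient of $q^e$, which is $\sum_{1<s\mid r}\phi(s)\,[\,n/e=s\,]$ --- gives the exact identity
\[
\sum_{m\mid n}\big((r,n/m)-1\big)\sum_{\deg P=m}\deg P=\sum_{\substack{s\mid(r,n)\\ s\neq1}}\phi(s)\,q^{n/s}+\big((r,n)-1\big).
\]
Thus the ``$1$''-part of $MT_r(g,n)$ equals exactly $\frac{-1}{q^{n/2}}\sum_{s\mid(r,n),\,s\neq1}\phi(s)q^{n/s}+O(q^{-n/2})$, which is the stated secondary main term.

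\emph{The densities.} It remains to handle $\frac{1}{q^{n/2}}\sum_{m\mid n}((r,n/m)-1)\sum_{\deg P=m}\deg P\,\delta_P(g)$. By Section~\ref{polysect}, $\F^{thin}_r$ is parametrised by $\alpha\in\Ff_q^*/(\Ff_q^*)^r$ and a $\phi(r)$-tuple $(f_i)_{(i,r)=1}$ of monic, squarefree, pairwise coprime polynomials, and by \eqref{thingenform} fixing the genus $g$ with $2g\equiv0\bmod r-1$ amounts to fixing $\sum\deg f_i\in\{d-1,d\}$, $d=\frac{2g+2r-2}{r-1}$, together with the residue class of $\deg F=\sum i\deg f_i$ modulo $r$ (forced into $(r,\deg F)\in\{1,r\}$). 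Detecting that residue with additive characters mod $r$, both $|\F^{thin}_r(g)|$ and the divisibility count become linear combinations of coefficients of
\[
\mathcal{P}_j(u)=\prod_{P'}\Big(1+c_r(j\deg P')\,u^{\deg P'}\Big)=\Big(\textstyle\prod_{(\ell,r)=1}(1-\zeta_r^{j\ell}qu)\Big)^{-1}G_j(u),\qquad 0\le j<r,
\]
where $c_r$ is the Ramanujan sum and $G_j$ is holomorphic and bounded for $|u|\le q^{-1/2-\epsilon}$; for the divisibility count one fixes the index $i_0$ with $P\mid f_{i_0}$ ($\phi(r)$ choices), deletes the Euler factor at $P$, and multiplies by $\sum_{a\ge1}(-c_r(j\deg P))^a u^{a\deg P}$. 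Only $j=0$ gives a pole at $u=1/q$, of order $\phi(r)$; for $r$ even one also has $\mathcal{P}_{r/2}(u)=\mathcal{P}_0(-u)$, a pole of the same order at $u=-1/q$; all remaining poles have smaller order. Extracting $[u^{d-1}]$ and $[u^{d}]$, dividing, and using $[u^{M-a\deg P}](1-qu)^{-\phi(r)}/[u^{M}](1-qu)^{-\phi(r)}=(1-\tfrac{a\deg P}{M})^{\phi(r)-1}q^{-a\deg P}(1+O(1/M))$ with $M\in\{d-1,d\}$, one arrives at an explicit formula for $\delta_P(g)$ whose leading part is built from $\sum_{a=1}^{\lfloor d/\deg P\rfloor}\big(\tfrac{-\phi(r)}{|P|}\big)^{a}\big(1-\tfrac{a\deg P}{d}\big)^{\phi(r)-1}$ (and, for $r$ even, its $(-1)^{\deg P}$-twist), with a remainder of relative size $O(1/g)$; likewise for $P_\infty$. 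Substituting all of this back into the displayed expression for $MT_r(g,n)$ produces $D_r(g,n)$ exactly as in \eqref{D_r(g,n)}, and since $\delta_P(g)\ll|P|^{-1}$, the relative $O(1/g)$ remainder contributes at most $q^{-n/2}\sum_{m\mid n}((r,n/m)-1)\sum_{\deg P=m}\deg P\cdot O(|P|^{-1}/g)=O(n/(gq^{n/2}))$.

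\emph{The bound on $D_r$, and the main difficulty.} The bound $D_r(g,n)\ll n q^{-n/2}$ then follows by bounding termwise: each summand carries $q^{-n/2}$ and, from the $a\ge1$ series, a factor $q^{-\deg P}$, so $\sum_{\deg P=m}\deg P\cdot(\text{density term})\ll\phi(r)\ll1$ for each $m\mid n$, while $\sum_{m\mid n}((r,n/m)-1)\ll n$. I expect the real work to be the penultimate step: carrying out the coefficient extraction and division \emph{uniformly} in $\deg P$ all the way up to $\asymp d$ --- where the naive approximation $\delta_P\approx\phi(r)|P|^{-1}$ is useless and the truncation $a\le\lfloor d/\deg P\rfloor$ and the factor $(1-\tfrac{a\deg P}{d})^{\phi(r)-1}$ are essential --- while keeping the character bookkeeping under control, namely the interaction of the $j=0$ and (for $r$ even) $j=r/2$ contributions with the two total degrees $d-1,d$ forced by $(r,\deg F)\in\{1,r\}$, so that the secondary main term emerges exactly and the rest is genuinely $O(n/(gq^{n/2}))$.
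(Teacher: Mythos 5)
Your proposal follows essentially the same route as the paper: reduce $MT_r(g,n)$ to the coprimality densities $1-\delta_P(g)$, compute those densities from Euler products that detect $\deg F \bmod r$ by characters (your $\mathcal{P}_j$ with Ramanujan-sum coefficients is exactly the paper's $H_{r;s}$ after grouping $j$ by $s=r/(j,r)$), perform the inclusion--exclusion that deletes the Euler factor at $P$, extract coefficients by contour integration around the dominant pole, and finish with the same M\"obius computation for the secondary main term and the same termwise bounds for $D_r(g,n)\ll nq^{-n/2}$ and the $O(n/(gq^{n/2}))$ error. (Your explicit retention of the $P_\infty$ term $(r,n)-1$ in the exact $a=0$ identity is fine; it is of size $O(q^{-n/2})$, the same order as $D_r$ itself, and the paper's treatment of that point is no more careful.)

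The one step that needs repair is your description of the $r$ even case. You assert that the leading part of $\delta_P(g)$ contains, besides $\sum_{a\ge1}\bigl(\tfrac{-\phi(r)}{|P|}\bigr)^a\bigl(1-\tfrac{a\deg P}{d}\bigr)^{\phi(r)-1}$, also its ``$(-1)^{\deg P}$-twist'' coming from $j=r/2$, and simultaneously that substituting back yields $D_r(g,n)$ exactly as in \eqref{D_r(g,n)}; these two claims are incompatible, since \eqref{D_r(g,n)} has no twisted terms. The resolution is that the twist cancels identically: because $\mathcal{P}_{r/2}(u)=\mathcal{P}_0(-u)$, one has $[u^{D-a\deg P}]\mathcal{P}_{r/2}=(-1)^{D-a\deg P}[u^{D-a\deg P}]\mathcal{P}_0$, so the character factor $(-1)^{a\deg P}$ produced by deleting the Euler factor at $P$ is absorbed, leaving an overall sign $(-1)^D$ on the whole $j=r/2$ block. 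Since $d=\tfrac{2g}{r-1}+2$ is even whenever $r$ is even and $(r-1)\mid 2g$, the $j=r/2$ block equals $+1$ times the $j=0$ block both in the $P$-coprime count and in $|\F_r^{thin}(g)|$, hence drops out of the ratio $\delta_P(g)$, and the density formula is twist-free, as required. You should carry out this cancellation explicitly (jointly over the two admissible total degrees $d-1,d$, exactly the interaction you flag as ``the real work''); note that the paper itself elides this point by bounding all $s\neq1$ contributions as lower order, which is not literally valid for $s=2$ since that pole has full order $\phi(r)$ --- the cancellation above is the correct justification.
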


We see immediately that if $(r,n)=1$ then $MT_r(g,n)=0$ as the condition $r|\frac{in}{\deg(P)}$ reduces down to $r|i$ which is impossible as $i=1,\dots,r-1$. So for now on we will always assume $(r,n)\not=1$.

\subsection{Notation}\label{notation}

We will first consider only the monic polynomials of $\F^{thin}_r$ and then add back the leading coefficients later. Therefore, define
$$\widehat{\F}^{thin}_r = \{F\in\F^{thin}_r : F \mbox{ is monic} \} $$
$$\widehat{\F}^{thin}_r(d) = \{F \in \widehat{\F}^{thin}_r : \deg(\rad(F))=d\}$$
and
$$\widehat{\F}^{thin}_{r;k}(d) := \left\{ F\in \F^{thin}_r(d) : \deg(F) \equiv k \mod{r}\right\}.$$

From now on, we will always assume $2g\equiv 0\mod{r-1}$ and set $d=\frac{2g+2r-2}{r-1}$. Then by Remark \ref{redrem}, we find
\begin{align}\label{monic1}
\frac{1}{r}|\F^{thin}_r(g)| =|\widehat{\F}^{thin}_{r;0} (d)| + \sum_{(k,r)=1} |\widehat{\F}^{thin}_{r;k} (d-1)|,
\end{align}
since there are $r$ different $\alpha\in\Ff_q^*/(\Ff_q^*)^r$.

Further, for any $G\in\Ff_q[T]$, we define
$$\F^{thin}_{r}(g;G) := \left\{F\in\F^{thin}_{r}(g) : (F,G)=1 \right\} $$
and
$$\widehat{\F}^{thin}_{r;k}(d;G) := \left\{F\in\widehat{\F}^{thin}_{r;k}(d) : (F,G)=1 \right\}.   $$
Consequently, we have
\begin{align}\label{monic2}
\frac{1}{r}|\F^{thin}_r(g;G)| =|\widehat{\F}^{thin}_{r;0}(d;G)| + \sum_{(k,r)=1} |\widehat{\F}^{thin}_{r;k} (d-1;G)|.
\end{align}

The key tool of proving Theorem \ref{maintermthm} is the following proposition.
\begin{prop}\label{unramprop}
Let $P$ be a finite prime of degree $n$, and let $d=\frac{2g+2r-2}{r-1}\in\mathbb{Z}$, then
\begin{align}\label{unrampropeq}
\frac{|\F^{thin}_r(g;P)|}{|\F^{thin}_r(g)|} = \sum_{a=0}^{\lfloor \frac{d}{n}\rfloor} \left(\frac{-\phi(r)}{q^n}\right)^a \left(1-\frac{an}{d}\right)^{\phi(r)-1} + O\left(\frac{1}{gq^n}\right)
\end{align}
\end{prop}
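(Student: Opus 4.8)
The plan is to count elements of $\widehat{\F}^{thin}_r(d)$ and $\widehat{\F}^{thin}_r(d;P)$ directly via their generating functions, exploiting that a thin polynomial $F=\prod_{(i,r)=1}f_i^i$ is determined by the tuple $(f_i)_{(i,r)=1}$ of monic, squarefree, pairwise coprime polynomials, and that the genus formula \eqref{thingenform} constrains $\sum_{(i,r)=1}\deg(f_i)$. First I would set up the Dirichlet series: the squarefree, pairwise-coprimality condition on a $\phi(r)$-tuple $(f_i)$ of monics is handled by an inclusion–exclusion (equivalently a product over primes $Q$ of the local factor $1 + \phi(r)|Q|^{-1}$ coming from the choices ``$Q$ divides exactly one $f_i$, or none''), and imposing $(F,P)=1$ simply deletes the $Q=P$ factor, multiplying the whole series by $(1+\phi(r)|P|^{-1})^{-1}$. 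Thus $\sum_{D} |\widehat{\F}^{thin}_r(D;P)|\, u^D = (1+\phi(r)u^n)^{-1}\sum_D |\widehat{\F}^{thin}_r(D)|\, u^D$ for $n=\deg P$, with an analogous identity after fixing $\deg(F)\bmod r$; I'd also need to track the leading-coefficient factor of $r$ and combine the $k\equiv 0$ and $(k,r)=1$ pieces as in \eqref{monic1}–\eqref{monic2}, but these bookkeeping factors cancel in the ratio.

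\textbf{Key steps.} (1) Write $A(u)=\sum_{D\ge 0} a_D u^D$ with $a_D = |\widehat{\F}^{thin}_r(D)|$ and show $A(u) = P_r(u)/(1-qu)^{\phi(r)}$ for some polynomial (or nice analytic) correction $P_r(u)$ with $P_r(1/q)\ne 0$; the pole of order $\phi(r)$ at $u=1/q$ reflects the $\phi(r)$ independent squarefree factors each contributing a $(1-qu)^{-1}$. This gives $a_D \sim c\, q^D D^{\phi(r)-1}$ by standard Tauberian/partial-fractions arguments, with an explicit secondary-term expansion. (2) Form the ratio $|\F^{thin}_r(g;P)|/|\F^{thin}_r(g)|$; using the generating-function identity from the previous paragraph, this equals the ratio of the $u^d$-coefficient of $(1+\phi(r)u^n)^{-1}A(u)$ to that of $A(u)$. (3) Expand $(1+\phi(r)u^n)^{-1} = \sum_{a\ge 0}(-\phi(r))^a u^{an}$ and extract coefficients: the $u^d$-coefficient of $u^{an}A(u)$ is $a_{d-an}$, so the numerator is $\sum_{a=0}^{\lfloor d/n\rfloor} (-\phi(r))^a a_{d-an}$. (4) Substitute the asymptotic $a_{d-an} = c\,q^{d-an}(d-an)^{\phi(r)-1}(1+O(1/(d-an)))$, divide by $a_d \sim c\,q^d d^{\phi(r)-1}$, and factor out to get $\sum_{a} (-\phi(r)/q^n)^a (1-an/d)^{\phi(r)-1}$ plus an error. (5) Control the error: the relative error in each term is $O(1/(d-an))$, and since the terms decay geometrically in $a$ (as $q^{-an}\le q^{-n}<1$) the total error is dominated by the $a=1$ term, giving $O(1/(dq^n)) = O(1/(gq^n))$ after noting $d \asymp g$; one must check the tail $a$ near $\lfloor d/n\rfloor$ separately, but there $q^{-an}$ is exponentially small and swamps the blow-up of $1/(d-an)$.

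\textbf{Main obstacle.} The delicate point is step (1): establishing the exact shape of $A(u)$ — in particular that the correction factor $P_r(u)$ arising from the inclusion–exclusion over the coprimality conditions (the Euler product $\prod_Q(1+\phi(r)|Q|^{-1})$ rewritten against $(1-qu)^{-\phi(r)}$) is analytic in a disc strictly larger than $|u|\le 1/q$ and nonvanishing at $1/q$, so that $a_D$ genuinely has the clean asymptotic $c\,q^D D^{\phi(r)-1}(1+O(1/D))$ with a \emph{power-saving} error rather than merely a lower-order main term. This is what makes the $O(1/(gq^n))$ error in \eqref{unrampropeq} — as opposed to something weaker — legitimate. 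The combinatorics of simultaneously imposing squarefreeness, pairwise coprimality of $\phi(r)$ polynomials, and a congruence on the total degree requires care, and is presumably where the bulk of the paper's technical work in this section lies; I would expect the author to isolate it as a separate counting lemma. Everything after that is the routine coefficient extraction and geometric-tail estimate sketched above.
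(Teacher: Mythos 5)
Your overall skeleton --- an Euler-product generating function with a pole of order $\phi(r)$ at $u=1/q$, removal of the local factor at $P$, expansion of $(1+\phi(r)u^n)^{-1}$, coefficient extraction and division --- is the same as the paper's. The genuine gap is in the step you dismiss as bookkeeping. The count $|\F^{thin}_r(g)|$ at fixed genus is not a single coefficient of your $A(u)$: by \eqref{thingenform} and Remark \ref{redrem} it equals $r\bigl(|\widehat{\F}^{thin}_{r;0}(d)|+\sum_{(k,r)=1}|\widehat{\F}^{thin}_{r;k}(d-1)|\bigr)$, so you must count with $\deg F$ in prescribed residue classes mod $r$ while $u$ tracks $\deg(\rad F)$. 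Your exact identity (coprime-to-$P$ series $=(1+\phi(r)u^n)^{-1}\times$ full series) holds only for the class-free count. To isolate the classes one needs a roots-of-unity filter, after which the series splits as $\frac1r\sum_{s|r}\sum_{(j,s)=1}\xi_s^{-jk}H_{r;s}(u;G)$ (Lemma \ref{genserlem}), and each twisted component has its own local factor at $P$, namely $1+\frac{\phi(r)}{\phi(s)}\sum_{(i,s)=1}(\xi_s^iu)^n$, not $1+\phi(r)u^n$. These components are not negligible: for $r$ even the $s=2$ component satisfies $H_{r;2}(u;G)=H_{r;1}(-u;G)$, so its coefficients are of the same order $d^{\phi(r)-1}q^d$ as the main term (it encodes the constraint $\deg F\equiv\deg(\rad F)\bmod 2$, which forces half the classes to be empty); one must check that it mirrors the $s=1$ component exactly (using that $d$ is even when $r$ is even) so that it cancels in the ratio, and that the $s\ge 3$ components with $a\ge 1$ contribute only $O(d^{\phi(r)-2}q^{d-n})$. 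Handling precisely this is the content of the paper's Lemmas \ref{genserlem}, \ref{incexclem}, \ref{aymptlem} and the case analysis in the proof of the proposition; in particular the $a=0$ terms over all $s$ must reconstitute $|\F^{thin}_r(g)|$ exactly, because the claimed error $O\!\left(\frac{1}{gq^n}\right)$ is a factor $q^n$ smaller than the $O\!\left(\frac1g\right)$ you would get by approximating each class count by $\frac1r a_d$.

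Conversely, the point you single out as the main obstacle --- the asymptotic $a_D=c\,q^D D^{\phi(r)-1}(1+O(1/D))$ --- is comparatively routine: $A(u)=\zeta_q(u)^{\phi(r)}\widetilde{H}(u)$ with $\widetilde{H}$ analytic in $|u|<q^{-1/2}$, and a contour/residue computation (Lemma \ref{aymptlem}) gives a degree-$(\phi(r)-1)$ polynomial times $q^D$ plus $O\!\left(q^{(\frac12+\epsilon)D}\right)$. Your steps (2)--(5) then go through essentially as in the paper, but only once the class decomposition and the twisted components are controlled; that missing piece is where the actual work of this section lies.
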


\begin{rem}

Note that if, $n>\frac{2g+2r-2}{r-1}$, then this Proposition implies that
$$\frac{|\F^{thin}_r(g;P)|}{|\F^{thin}_r(g)|} = 1 + O\left(\frac{1}{gq^n}\right).$$
Indeed, it is fairly easy to see that this actually holds with equality as $\deg(\rad(F))\leq \frac{2g+2r-2}{r-1}$ for all $F\in\F^{thin}_r(g)$ and so is immediately coprime to any polynomial of greater degree.

Moreover, if $n= o(d)$, then we can rewrite
$$\left(1-\frac{an}{d}\right)^{\phi(r)-1} = 1 + o(1)$$
and then get
$$\frac{|\F^{thin}_r(g;P)|}{|\F^{thin}_r(g)|} \sim \frac{q^n}{q^n+\phi(r)}.$$
Further, if $n=O(1)$, then we get the error above would be $O(\frac{1}{g})$.
\end{rem}

\subsection{Generating Series}

Note that $\F^{thin}_r(g) = \F^{thin}_r(g;1)$. So it is enough to consider everything for arbitrary $G \in\Ff_q[T]$. That is, define the generating series
$$\mathcal{G}^{thin}_{r;k}(u;G) = \sum_{d=0}^{\infty} |\widehat{\F}^{thin}_{r;k}(d;G)|u^d.$$

\begin{lem}\label{genserlem}

For any $s|r$, and any primitive $s^{th}$ root of unity, $\xi_s$, define
$$H_{r;s}(u;G) = \prod_{P\nmid G} \left(1+ \frac{\phi(r)}{\phi(s)} \sum_{(i,s)=1} (\xi_s^i u)^{\deg(P)} \right).$$
Then
$$\mathcal{G}^{thin}_{r;k}(u;G) = \frac{1}{r} \sum_{s|r} \sum_{(j,s)=1} \xi_s^{-jk} H_{r;s}(u;G).$$
\end{lem}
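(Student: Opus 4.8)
The plan is to build the generating series as an Euler product and then use orthogonality of characters on $\Z/r\Z$ to pick out the polynomials whose degree lies in a fixed residue class. First I would set up the Euler product: since an element $F = \alpha\prod_{(i,r)=1} f_i^i$ of $\widehat{\F}^{thin}_r(d;G)$ (taking $\alpha=1$ for the monic count, with the $\alpha$'s restored later) is determined by a choice, for each prime $P \nmid G$, of which single exponent $i$ with $(i,r)=1$ it contributes to (with $i=0$ meaning $P$ does not divide $F$ at all, since the $f_i$ are squarefree and pairwise coprime), the unrestricted generating series $\sum_d |\widehat{\F}^{thin}_r(d;G)| u^d$ factors as $\prod_{P\nmid G}\bigl(1 + \phi(r)u^{\deg P}\bigr)$, because there are exactly $\phi(r)$ admissible exponents $i$ and each contributes a copy of $P$ to $\rad(F)$, i.e. a factor $u^{\deg P}$. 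The variable $u$ tracks $\deg(\rad(F))$, which is what the definition of $\widehat{\F}^{thin}_{r;k}(d;G)$ uses.

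Next I would insert the degree-class restriction $\deg(F)\equiv k \pmod r$ via the standard detector
$$\mathbf{1}[\deg(F)\equiv k \bmod r] = \frac{1}{r}\sum_{\psi} \psi(\deg(F) - k),$$
summing over the $r$ additive characters $\psi$ of $\Z/r\Z$. Grouping these characters by order: a character of order $s \mid r$ is of the form $\psi(m) = \xi_s^{jm}$ for some $j$ with $(j,s)=1$, and there are $\phi(s)$ of them. Feeding $\psi(\deg F) = \prod_{P \mid F, \text{with mult.}} \psi(\deg P)^{(\text{exponent})}$ into the Euler product turns the local factor at $P$ from $1+\phi(r)u^{\deg P}$ into $1 + \sum_{(i,r)=1}\psi(i)(\xi\text{-twisted }u)^{\deg P}$; since $\psi$ has order $s$, $\psi(i) = \xi_s^{ji}$ depends only on $i \bmod s$, and summing over the $\phi(r)/\phi(s)$ values of $i$ in each class mod $s$ (coprime to $s$, since $(i,r)=1 \Rightarrow (i,s)=1$, and conversely each residue class mod $s$ coprime to $s$ is hit equally often — this is the one arithmetic point that needs a line of justification) collapses the local factor to exactly $1 + \frac{\phi(r)}{\phi(s)}\sum_{(i,s)=1}(\xi_s^{i} u)^{\deg P}$, which is the definition of the local factor of $H_{r;s}(u;G)$. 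Collecting the $\psi(-k) = \xi_s^{-jk}$ prefactor and the $\frac1r$ then gives exactly $\mathcal{G}^{thin}_{r;k}(u;G) = \frac1r\sum_{s\mid r}\sum_{(j,s)=1}\xi_s^{-jk}H_{r;s}(u;G)$.

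The main obstacle — really the only nontrivial step — is verifying that the exponents $i$ with $(i,r)=1$, reduced mod $s$ for a divisor $s\mid r$, are \emph{equidistributed} over the $\phi(s)$ residue classes coprime to $s$, with each class hit exactly $\phi(r)/\phi(s)$ times. This follows from the Chinese Remainder Theorem applied to $r = s \cdot (r/s')$-type factorizations, or more cleanly from multiplicativity of $\phi$ together with the surjectivity of the reduction map $(\Z/r\Z)^\times \to (\Z/s\Z)^\times$, whose kernel has order $\phi(r)/\phi(s)$; I would state this as a short sublemma. The rest is bookkeeping: one should note that $H_{r;s}$ is independent of which primitive root $\xi_s$ is chosen because the outer sum over $j$ coprime to $s$ symmetrizes it (so the right-hand side is well-defined), and that convergence/formal-power-series manipulations are legitimate since for each fixed $d$ only finitely many primes contribute. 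No delicate estimates are needed here — that work is deferred to Proposition \ref{unramprop}.
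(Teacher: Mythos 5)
Your proposal is correct and follows essentially the same route as the paper: detect the congruence class of $\deg(F)$ with $r$-th roots of unity, factor the resulting sum over tuples of squarefree coprime $f_i$ into an Euler product over primes $P\nmid G$, and group the characters by their order $s\mid r$. The only real difference is cosmetic: you make explicit the fiber-counting fact for $(\Z/r\Z)^\times \to (\Z/s\Z)^\times$ (each coprime class mod $s$ hit $\phi(r)/\phi(s)$ times), which the paper uses implicitly when identifying the twisted Euler product with $H_{r;s}(u;G)$.
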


\begin{proof}

Since every monic $F\in\widehat{\F}^{thin}_r$ can be written as
$$F = \prod_{(i,r)=1}f_i^i$$
where the $f_i$ are monic, squarefree and pairwise coprime we can write $\mathcal{G}^{thin}_{r;k}(u;G)$ as a sum over the monic $f_i$:
\begin{align*}
\mathcal{G}^{thin}_{r;k}(u;G) & = \sum_{(f_i)_{(i,r)=1}} \mu^2\left(G\prod f_i\right) \frac{1}{r} \sum_{j=0}^r \xi_r^{j(\sum i\deg(f_i)-k)} u^{\sum \deg(f_i)}\\
& = \frac{1}{r} \sum_{j=0}^r \xi_r^{-jk} \sum_{(f_i)_{(i,r)=1}} \mu^2\left(G\prod f_i\right) \prod \left( \xi_r^{ij}u \right)^{\deg(f_i)} \\
& = \frac{1}{r} \sum_{j=0}^r \xi_r^{-jk} \prod_{P\nmid G} \left(1 + \sum_{(i,r)=1} \left( \xi_r^{ij}u \right)^{\deg(P)}\right)
\end{align*}
We see that since $(i,r)=1$, the Euler product only depends on the greatest common divisor of $j$ and $r$. Therefore, if we suppose $(j,r)=\frac{r}{s}$ for some $s|r$, then we can write $j=\frac{r}{s} j'$ with $(j',s)=1$ and so
\begin{align*}
\mathcal{G}^{thin}_{r;k}(u;G) & =\frac{1}{r} \sum_{s|r} \sum_{(j',s)=1} \xi_s^{-j'k} \prod_{P\nmid G} \left(1 + \sum_{(i,r)=1} \left( \xi_s^{i}u \right)^{\deg(P)}\right)\\
& =  \frac{1}{r} \sum_{s|r} \sum_{(j,s)=1} \xi_s^{-jk} H_{r;s}(u;G)
\end{align*}

\end{proof}

\subsection{Inclusion-Exclusion for $H_{r;s}(u,P)$}

While the dependance on $G$ in $\mathcal{G}_{r;k}(u;G)$ is complicated, we can use the fact that the dependance on $H_{r;s}(u,G)$ is fairly straightforward (especially when $G=P$ is prime) to perform an inclusion exclusion.

\begin{lem}\label{incexclem}

Let $P$ be a prime of degree $n$. Then we have
$$[u^d] H_{r;s}(u;P) = \sum_{a=0}^{\lfloor \frac{d}{n}\rfloor} \left(-\frac{\phi(r)}{\phi(s)}\right)^a \sum_{\substack{i_1,\dots,i_a \\ (i_j,s)=1}} \xi_s^{n\sum i_j } [u^{d-an}] H_{r;s}(u;1) $$
where for any analytic function $K$ we denote $[u^d]K(u)$ as the $d^{th}$ coefficient of the Taylor series centered around $0$.

\end{lem}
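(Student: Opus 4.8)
The plan is to exploit the simple multiplicative structure of $H_{r;s}(u;P)$ as a ratio of $H_{r;s}(u;1)$ and a single Euler factor at $P$. Indeed, directly from the definition
\[
H_{r;s}(u;1) = H_{r;s}(u;P)\cdot\left(1+\frac{\phi(r)}{\phi(s)}\sum_{(i,s)=1}(\xi_s^i u)^{\deg(P)}\right),
\]
since removing the constraint $P\nmid G$ just reinstates the one missing factor corresponding to the prime $P$ (here $\deg(P)=n$). Hence
\[
H_{r;s}(u;P) = H_{r;s}(u;1)\cdot\left(1+\frac{\phi(r)}{\phi(s)}\sum_{(i,s)=1}\xi_s^{in} u^{n}\right)^{-1}.
\]

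Next I would expand the reciprocal as a geometric series in the variable $u^n$: writing $W = \frac{\phi(r)}{\phi(s)}\sum_{(i,s)=1}\xi_s^{in} u^{n}$, we have $\frac{1}{1+W} = \sum_{a\ge 0}(-W)^a$, which converges as a formal power series since $W$ has no constant term (it is divisible by $u^n$). Expanding the $a$-th power of the inner sum multinomially,
\[
(-W)^a = \left(-\frac{\phi(r)}{\phi(s)}\right)^a u^{an}\sum_{\substack{i_1,\dots,i_a\\ (i_j,s)=1}}\xi_s^{n\sum_j i_j},
\]
so that
\[
H_{r;s}(u;P) = \sum_{a=0}^{\infty}\left(-\frac{\phi(r)}{\phi(s)}\right)^a u^{an}\sum_{\substack{i_1,\dots,i_a\\ (i_j,s)=1}}\xi_s^{n\sum_j i_j}\, H_{r;s}(u;1).
\]

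Finally I would extract the coefficient of $u^d$ on both sides. Since each term in the $a$-sum contributes a factor $u^{an}$, extracting $[u^d]$ forces us to take $[u^{d-an}]$ of $H_{r;s}(u;1)$, which vanishes for $d-an<0$, i.e. for $a>\lfloor d/n\rfloor$; this truncates the sum at $a=\lfloor d/n\rfloor$ and gives exactly the claimed identity. There is no real obstacle here — the only point needing a word of care is the justification that the formal geometric series manipulation is legitimate, which is immediate because $H_{r;s}(u;1)$ and the Euler factor are honest power series in $u$ with the factor having vanishing constant term, so all coefficient extractions are finite sums. (One should also note that the factor $\left(1+\frac{\phi(r)}{\phi(s)}\sum_{(i,s)=1}\xi_s^{in}u^n\right)$ has constant term $1$ and hence is invertible in $\Ff_q[[u]]$, or rather $\mathbb{C}[[u]]$, so the division is well-defined.)
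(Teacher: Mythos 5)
Your proposal is correct and is essentially the argument in the paper: both rest on the identity $H_{r;s}(u;1)=H_{r;s}(u;P)\bigl(1+\frac{\phi(r)}{\phi(s)}\sum_{(i,s)=1}(\xi_s^iu)^{n}\bigr)$ followed by expanding the inverse factor and matching coefficients. The only (cosmetic) difference is that you invert the Euler factor in one stroke as a formal geometric series, whereas the paper performs the same expansion by recursively substituting the coefficient identity $[u^d]H_{r;s}(u;P)=[u^d]H_{r;s}(u;1)-\frac{\phi(r)}{\phi(s)}\sum_{(i,s)=1}\xi_s^{in}[u^{d-n}]H_{r;s}(u;P)$ until the exponent drops below zero.
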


\begin{proof}

By the definition of $H_{r;s}(u;G)$, we have that
$$H_{r;s}(u;1) = \left(1+\frac{\phi(r)}{\phi(s)} \sum_{(i,s)=1} (\xi_s^iu)^{\deg(P)}\right) H_{r;s}(u;P)$$
from which we obtain
$$[u^d]H_{r;s}(u;1) = [u^d]H_{r;s}(u;P) + \frac{\phi(r)}{\phi(s)} \sum_{(i,s)=1} \xi_s^{i\deg(P)} [u^{d-n}] H_{r;s}(u;P). $$
and so rearranging for $ [u^d]H_{r;s}(u;P)$ and recursively applying the formula, we get
\begin{align*}
[u^d]H_{r;s}(u;P) = & [u^d]H_{r;s}(u;1) - \frac{\phi(r)}{\phi(s)} \sum_{(i,s)=1} \xi_s^{i\deg(P)} [u^{d-n}] H_{r;s}(u;P)\\
= &  [u^d]H_{r;s}(u;1) - \frac{\phi(r)}{\phi(s)} \sum_{(i,s)=1} \xi_s{^i\deg(P)} [u^{d-n}]H_{r;s}(u;1)\\
& + \left(\frac{\phi(r)}{\phi(s)}\right)^2 \sum_{\substack{(i_1,s)=1 \\ (i_2,s)=1}} \xi_s^{(i_1+i_2)\deg(P)} [u^{d-2n}] H_{r;s}(u;P) \\
= & \cdots \\
= & \sum_{a=0}^{\lfloor \frac{d}{n}\rfloor} \left(-\frac{\phi(r)}{\phi(s)}\right)^a \sum_{\substack{i_1,\dots,i_a \\ (i_j,s)=1}} \xi_s^{\sum i_j \deg(P)} [u^{d-an}] H_{r;s}(u;1)
\end{align*}

\end{proof}

\subsection{Asymptotics of $[u^d]H_{r,s}(u;1)$}

We see now, it suffices to determine the size of $[u^d]H_{r,s}(u;1)$.

\begin{lem}\label{aymptlem}

There is a polynomial $P_{r,s}$ of degree at most $\frac{\phi(r)}{\phi(s)}-1$ such that
$$[u^d]H_{r,s}(u;1) = P_{r,s}(d) q^d + O\left(q^{\left(\frac{1}{2}+\epsilon\right)d}\right).$$
Moreover, $P_{r,1}$ is a polynomial of exact degree $\phi(r)-1$.
\end{lem}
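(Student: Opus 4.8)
The plan is to analyze the Euler product $H_{r,s}(u;1) = \prod_P \left(1 + \frac{\phi(r)}{\phi(s)} \sum_{(i,s)=1} (\xi_s^i u)^{\deg(P)}\right)$ by comparing it to an appropriate power of the zeta function $\zeta_{\Ff_q}(u) = \prod_P (1 - u^{\deg P})^{-1} = \frac{1}{1-qu}$. First I would observe that the local factor at a prime $P$ of degree $m$ is $1 + \frac{\phi(r)}{\phi(s)} c_s(\xi_s^m) u^m + \cdots$ where $c_s$ collects the sum $\sum_{(i,s)=1}$; when $s=1$ this is simply $1 + \phi(r) u^m$, so $H_{r,1}(u;1)$ behaves locally like $(1-u^m)^{-\phi(r)}$ to leading order. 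More precisely, I would write $H_{r,s}(u;1) = Z(u)^{e_s} G_{r,s}(u)$ where $Z(u) = (1-qu)^{-1}$, the exponent $e_s = \frac{\phi(r)}{\phi(s)} \cdot \#\{i : 1 \le i \le s, (i,s)=1, \xi_s^{\text{something}}\}$ is chosen so that the ratio $G_{r,s}(u) := H_{r,s}(u;1) Z(u)^{-e_s}$ extends analytically (and without zeros) to a disc of radius strictly larger than $q^{-1/2}$, i.e. $|u| < q^{-1/2+\delta}$ for some $\delta > 0$.

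The key computation is to pin down $e_s$. Since $(1-qu)^{-1} = \sum_m \frac{1}{m}\left(\sum_{d | m} d\, \pi_d\right) u^m$-type bookkeeping is cleaner on the logarithmic side, I would take $\log H_{r,s}(u;1) = \sum_P \log\left(1 + \frac{\phi(r)}{\phi(s)}\sum_{(i,s)=1}(\xi_s^i u)^{\deg P}\right)$ and expand. The coefficient of $u^N$ in $\log H_{r,s}(u;1)$ comes from the terms $\frac{\phi(r)}{\phi(s)} \sum_{(i,s)=1} \xi_s^{iN} \sum_{\deg P \mid N, \deg P = N} 1 + (\text{lower-degree prime contributions with } N/\deg P \ge 2)$. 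The quantity $\sum_{(i,s)=1} \xi_s^{iN}$ is a Ramanujan-type sum: it equals $\phi(s)$ if $s \mid N$ and is a bounded quantity (a Möbius-weighted divisor sum) in general, in particular $0$ unless small divisors of $s$ divide $N$. Using the prime polynomial count $\#\{P : \deg P = N\} = \frac{q^N}{N} + O(q^{N/2}/N)$, the leading behaviour of $[u^N]\log H_{r,s}$ is $\frac{\phi(r)}{\phi(s)}\cdot(\text{Ramanujan sum}) \cdot \frac{q^N}{N}$, which when $s \mid N$ gives $\phi(r) \frac{q^N}{N}$ — exactly matching $[u^N]\log Z(u)^{\phi(r)} = \phi(r) q^N/N$ — but when $s \nmid N$ gives something smaller. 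So $H_{r,s}(u;1)$ is \emph{not} simply a power of $Z(u)$ unless $s=1$; rather one must factor out $\prod_{t \mid s} Z_t(u)^{c_t}$ where $Z_t$ isolates the degrees divisible by $t$. The upshot is that $H_{r,s}(u;1)$ has a pole at $u = q^{-1}$ of some order $\le \frac{\phi(r)}{\phi(s)}$, with equality (order exactly $\phi(r)$) when $s=1$, and all other poles are at radius $\ge q^{-1/2}$ (cyclotomic-twisted copies of the $1/(1-qu)$ factor sitting at $u = \xi q^{-1}$ with $|\xi|=1$, contributing to lower-order oscillating terms that are absorbed, after we check they don't pile up at the main pole) — actually for the clean statement one shows $H_{r,s}(u;1) = \frac{G_{r,s}(u)}{(1-qu)^{m_s}}$ with $G_{r,s}$ holomorphic and nonvanishing on $|u| \le q^{-1/2-\epsilon'}$... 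I would organize this by writing $H_{r,s}(u;1)(1-qu)^{m_s} \cdot \prod(\text{finitely many } (1-\zeta qu)^{m_\zeta}) = G_{r,s}(u)$ absolutely convergent for $|u| < q^{-1/2+\delta}$, via the standard estimate that the remaining Euler product's logarithm converges there because the prime-counting error is $O(q^{N/2})$.

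Given the factorization $H_{r,s}(u;1) = G_{r,s}(u)/(1-qu)^{m_s}$ with $m_s \le \frac{\phi(r)}{\phi(s)}$ and $m_1 = \phi(r)$, and $G_{r,s}$ holomorphic on $|u| \le q^{-1/2-\epsilon}$, extracting coefficients is routine: $[u^d]\frac{1}{(1-qu)^{m_s}} = \binom{d+m_s-1}{m_s-1} q^d$, which is $\tilde P_{r,s}(d) q^d$ for an explicit polynomial $\tilde P_{r,s}$ of degree $m_s - 1$; convolving with the Taylor coefficients of $G_{r,s}$ (which are $O(q^{(1/2+\epsilon)d})$ by Cauchy's estimate on the circle $|u| = q^{-1/2-\epsilon}$) gives $[u^d]H_{r,s}(u;1) = P_{r,s}(d) q^d + O(q^{(1/2+\epsilon)d})$ where $P_{r,s}(d) = \sum_{j} [u^j]G_{r,s}(u) \binom{d - j + m_s - 1}{m_s - 1} q^{-j}$; this is a polynomial in $d$ of degree $m_s - 1 \le \frac{\phi(r)}{\phi(s)} - 1$, with leading coefficient $\frac{G_{r,s}(1/q)}{(m_s-1)!} q^{-(m_s-1)}$ — which is nonzero precisely because $G_{r,s}$ is nonvanishing at $1/q$ — and when $s = 1$ we have $m_1 = \phi(r)$, so $P_{r,1}$ has exact degree $\phi(r) - 1$. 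The main obstacle I anticipate is the careful determination of the pole order $m_s$ and, more delicately, verifying that no spurious pole lands on the circle $|u| = q^{-1}$ except $u = 1/q$ itself contributing to the top-degree term — i.e. that the oscillatory cyclotomic factors $(1 - \xi q u)^{-m_\xi}$ with $\xi \neq 1$ genuinely carry exponent $m_\xi < m_s$ (or can be lumped into the error), which requires the Ramanujan-sum cancellation $\sum_{(i,s)=1}\xi_s^{iN} = O(1)$ to be used at exactly the right place; secondarily, keeping the combinatorics of "which $t \mid s$ and which roots of unity appear" bookkept correctly so that the degree bound $\frac{\phi(r)}{\phi(s)} - 1$ comes out (rather than something larger) and the $s=1$ exact-degree claim is clean.
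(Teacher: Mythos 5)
Your overall strategy---factor $H_{r,s}(u;1)$ into explicit zeta-type factors times an Euler product that converges absolutely for $|u|<q^{-1/2}$, then extract coefficients by a contour/binomial-convolution argument---is essentially the paper's approach, and your handling of $s=1$ (a single pole at $u=q^{-1}$ of order $\phi(r)$, with nonvanishing of the remaining factor at $q^{-1}$ giving the exact degree $\phi(r)-1$) is correct. The genuine gap is in the case $s>1$. Comparing the local factor $1+\frac{\phi(r)}{\phi(s)}\sum_{(i,s)=1}(\xi_s^iu)^{\deg P}$ with $\prod_{(i,s)=1}\bigl(1-(\xi_s^iu)^{\deg P}\bigr)^{-\phi(r)/\phi(s)}$ yields $H_{r;s}(u;1)=\prod_{(i,s)=1}(1-q\xi_s^iu)^{-\phi(r)/\phi(s)}\,\widetilde{H}_{r;s}(u;1)$ with $\widetilde{H}_{r;s}$ holomorphic and bounded in $|u|<q^{-1/2}$. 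Hence for $s>1$ there is \emph{no} pole at $u=q^{-1}$ at all: the only poles inside $|u|<q^{-1/2}$ sit at $u=(\xi_s^iq)^{-1}$ with $(i,s)=1$, and they all have the \emph{same} order $\phi(r)/\phi(s)$. Your plan to show the cyclotomic-twisted poles ``carry exponent $m_\xi<m_s$'' or can be ``lumped into the error'' therefore cannot work: each such pole contributes a term of size comparable to $d^{\phi(r)/\phi(s)-1}q^d$ (with an oscillating phase $\xi_s^{-id}$), which is the same magnitude as the intended main term and vastly larger than $O\bigl(q^{(1/2+\epsilon)d}\bigr)$. Your final coefficient extraction convolves only against $(1-qu)^{-m_s}$, so for $s>1$ it would produce a vanishing main term and a false asymptotic.

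The repair is exactly what the paper does (and what your hedged factorization $H_{r,s}(u;1)\prod_\zeta(1-\zeta qu)^{m_\zeta}=G_{r,s}(u)$ already suggests, provided you carry those factors through the extraction): apply the residue theorem on $|u|=q^{-1/2-\epsilon}$ and compute the residue at \emph{every} pole $(\xi_s^iq)^{-1}$, each giving a contribution $Q_i(d)q^d$ with $\deg Q_i=\phi(r)/\phi(s)-1$, and set $P_{r,s}=\sum_{(i,s)=1}Q_i$. The ``degree at most $\phi(r)/\phi(s)-1$'' in the statement then reflects possible cancellation among the oscillating contributions for $s>1$ (only the upper bound is used later), while for $s=1$ there is a single pole at $q^{-1}$ and the degree is exact, as you argued.
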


\begin{proof}

We have that
\begin{align*}
H_{r;s}(u;1) & = \prod_{P} \left(1+ \frac{\phi(r)}{\phi(s)} \sum_{(i,s)=1} (\xi_s^i u)^{\deg(P)} \right) \\
& = \prod_{(i,s)=1} \prod_P \left(1 - (\xi_s^iu)^{\deg(P)} \right)^{-\frac{\phi(r)}{\phi(s)}} \widetilde{H}_{r;s}(u;1)
\end{align*}
where $\widetilde{H}_{r;s}(u;1) = \prod_P \left(1+O(u^{2\deg(P)})\right)$ and so absolutely converges for $|u|<q^{-1/2}$. Further
$$\prod_P \left(1 - (\xi_s^iu)^{\deg(P)} \right)^{-1} = \zeta_q(\xi_s^iu) = \frac{1}{1-q\xi_s^iu}$$
where $\zeta_q$ is the zeta function attached to $\Ff_q[X]$. Therefore, $H_{r;s}(u;1)$ can be meromorphically extended to the region $|u|<q^{-1/2}$ with poles of order $\frac{\phi(r)}{\phi(s)}$ when $u=(\xi_s^iq)^{-1}$. So, if $\Gamma = \{u : |u|=q^{-1/2-\epsilon}\}$, then by Cauchy residue formula, we get
$$\frac{1}{2\pi i}\oint_{\Gamma} \frac{H_{r;s}(u;1)}{u^{d+1}}du = \Res_{u=0} \left(\frac{H_{r;s}(u;1)}{u^{d+1}}\right) + \sum_{(i,s)=1} \Res_{u=(\xi_s^iq)^{-1}} \left( \frac{H_{r;s}(u;1)}{u^{d+1}} \right)$$

By definition, we have
$$\Res_{u=0} \left(\frac{H_{r;s}(u;1)}{u^{d+1}}\right) = [u^d]H_{r;s}(u;1).$$
Further, since $\widetilde{H}_{r;s}(u,1)$ absolutely converges on $\Gamma$, it will be absolutely bounded as well. Therefore
$$\frac{1}{2\pi i}\oint_{\Gamma} \frac{H_{r;s}(u;1)}{u^{d+1}}du  \ll q^{\left(\frac{1}{2} +\epsilon\right)d}.$$

So it remains to determine the residues at $(\xi_s^iq)^{-1}$. By the formula of higher order poles, if we denote $M:=\frac{\phi(r)}{\phi(s)}$, then
\begin{align*}
\Res_{u=(\xi_s^iq)^{-1}} \left( \frac{H_{r;s}(u;1)}{u^{d+1}} \right) & = \frac{1}{(M-1)!}\lim_{u\to (\xi_s^iq)^{-1}} \frac{d^{M-1}}{du^{M-1}} \left((u-(\xi_s^iq)^{-1})^M \frac{H_{r;s}(u;1)}{u^{d+1}} \right) \\
& = \frac{1}{(M-1)!}\lim_{u\to (\xi_s^iq)^{-1}} \frac{d^{M-1}}{du^{M-1}} \frac{K(u)}{u^{d+1}}\\
& = \frac{1}{(M-1)!} \sum_{m=0}^{M-1} \binom{M-1}{m} K^{(M-m)}\left((\xi_s^iq)^{-1}\right) (d+1)\cdots (d+m) (\zeta_s^iq)^{d+m+1}\\
& = Q_i(d)q^d
\end{align*}
where
$$K(u) = (-(\xi_s^iq)^{-1})^M \prod_{\substack{(j,s)=1 \\ j\not=i}} \frac{1}{1-q\xi_s^ju} \widetilde{H}_{r;s}(u;1)$$
and $Q_i$ is a polynomial of exact degree $\frac{\phi(r)}{\phi(s)}-1$.

Setting $P_{r,s} = \sum_{(i,s)=1} Q_i$ finishes the proof.

\end{proof}

\begin{cor}\label{aymptcor}
Suppose $q\equiv 1 \mod{r}$ and $2g\equiv 0 \mod{r-1}$, then there is a polynomial $P_r$ of degree $\phi(r)-1$ such that
$$|\F^{thin}_r(g)| = P_r(g)q^{\frac{2g+2r-2}{r-1}} + O\left(q^{\left(\frac{1}{2}+\epsilon\right)\frac{2g}{r-1}}\right).$$
In particular, the leading coefficient of $P_r$ will be
$$C_r := C_{r,1}\left(\frac{2}{r-1}\right)^{\phi(r)-1} \frac{q+\phi(r)}{q} \not=0$$
where $C_{r,1}$ is the leading coefficient of $P_{r,1}$ as in Lemma \ref{aymptlem}.
\end{cor}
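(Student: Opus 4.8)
The plan is to derive Corollary \ref{aymptcor} as a direct consequence of Lemma \ref{aymptlem}, Lemma \ref{genserlem}, and the monic-reduction identity \eqref{monic1}. First I would apply \eqref{monic1} with $d=\frac{2g+2r-2}{r-1}$, which expresses $\frac{1}{r}|\F^{thin}_r(g)|$ as $|\widehat{\F}^{thin}_{r;0}(d)|$ plus a sum of $|\widehat{\F}^{thin}_{r;k}(d-1)|$ over $(k,r)=1$. Each of these quantities is a coefficient of one of the generating series $\mathcal{G}^{thin}_{r;k}(u;1)$, which by Lemma \ref{genserlem} is $\frac{1}{r}\sum_{s|r}\sum_{(j,s)=1}\xi_s^{-jk}H_{r;s}(u;1)$. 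So $|\F^{thin}_r(g)|$ is a finite $\mathbb{C}$-linear combination of the coefficients $[u^d]H_{r;s}(u;1)$ and $[u^{d-1}]H_{r;s}(u;1)$ over all $s|r$.

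Next I would feed in Lemma \ref{aymptlem}: each $[u^d]H_{r;s}(u;1) = P_{r,s}(d)q^d + O(q^{(1/2+\epsilon)d})$, where $\deg P_{r,s}\leq \frac{\phi(r)}{\phi(s)}-1$. For $s=1$ the degree is exactly $\phi(r)-1$; for every proper divisor $s>1$ of $r$ the degree is strictly smaller, and moreover $q^{d-1}=q^{-1}q^d$ only rescales, so the $s\neq 1$ terms contribute a polynomial in $d$ of degree at most $\phi(r)-2$ times $q^d$. Hence the leading behaviour in the variable $d$ (equivalently in $g$, since $d=\frac{2g+2r-2}{r-1}$ is an affine function of $g$ with positive slope) comes solely from the $s=1$ contributions. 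Collecting the $s=1$ pieces: from $k=0$ we pick up $[u^d]H_{r;1}(u;1)$ and from each $(k,r)=1$ we pick up $[u^{d-1}]H_{r;1}(u;1)$ (the character sum $\sum_{(j,1)=1}\xi_1^{-jk}=1$ for $s=1$), giving a total of $[u^d]H_{r;1}(u;1) + \phi(r)[u^{d-1}]H_{r;1}(u;1)$, up to the factor of $r$. Substituting $[u^d]H_{r;1}(u;1)\sim C_{r,1}d^{\phi(r)-1}q^d$ and $[u^{d-1}]H_{r;1}(u;1)\sim C_{r,1}(d-1)^{\phi(r)-1}q^{d-1}\sim C_{r,1}d^{\phi(r)-1}q^{d}/q$ yields leading term $C_{r,1}\frac{q+\phi(r)}{q}d^{\phi(r)-1}q^d$; rewriting $d^{\phi(r)-1}=\left(\frac{2g+2r-2}{r-1}\right)^{\phi(r)-1}\sim\left(\frac{2}{r-1}\right)^{\phi(r)-1}g^{\phi(r)-1}$ and absorbing the factor of $r$ into the definition of $P_r$ gives the claimed leading coefficient $C_r = C_{r,1}\left(\frac{2}{r-1}\right)^{\phi(r)-1}\frac{q+\phi(r)}{q}$.

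For the error term, each application of Lemma \ref{aymptlem} contributes $O(q^{(1/2+\epsilon)d})$; since $d = \frac{2g}{r-1} + 2$, we have $q^{(1/2+\epsilon)d} = q^{(1+2\epsilon)}q^{(1/2+\epsilon)\frac{2g}{r-1}} \ll q^{(1/2+\epsilon')\frac{2g}{r-1}}$ after relabelling $\epsilon$, and there are only $O(1)$ such terms (one per divisor $s$ of $r$, times at most $r$ residue classes $k$), so the total error is $O(q^{(1/2+\epsilon)\frac{2g}{r-1}})$ as stated. Packaging the polynomial-in-$g$ main term as $P_r(g)q^{\frac{2g+2r-2}{r-1}}$ requires only noting that a polynomial in $d$ of degree $\phi(r)-1$ composed with the affine substitution $d=\frac{2g+2r-2}{r-1}$ is again a polynomial in $g$ of degree $\phi(r)-1$, and that the $s\neq 1$ contributions lower the degree but do not raise it, so they are safely absorbed into $P_r$.

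The only genuinely delicate point — and the place I would be most careful — is verifying that the leading coefficient $C_r$ is nonzero, i.e. that no cancellation occurs among the top-degree terms. This reduces to $C_{r,1}\neq 0$ (guaranteed by the last sentence of Lemma \ref{aymptlem}, since $P_{r,1}$ has \emph{exact} degree $\phi(r)-1$) together with $q+\phi(r)\neq 0$, which is trivially true over the reals, and the observation that only the $k$-sum over $s=1$ contributes at top degree with a coefficient $+1$ for each $k$ (so the $\phi(r)+1$ copies add rather than cancel). Everything else is bookkeeping: matching powers of $q$ between the $d$ and $d-1$ coefficients, confirming the affine change of variables $d\leftrightarrow g$, and controlling the number of error terms.
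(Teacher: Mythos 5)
Your overall route is the paper's: reduce to monic counts via \eqref{monic1}, expand with Lemma \ref{genserlem}, insert Lemma \ref{aymptlem}, and convert $d=\frac{2g+2r-2}{r-1}$ to $g$; the error-term bookkeeping is fine. (One small remark: the factor $\frac{1}{r}$ in \eqref{monic1} cancels against the factor $\frac{1}{r}$ in Lemma \ref{genserlem}, so there is no leftover factor of $r$ to ``absorb'' into $P_r$.) The genuine gap is at the step you yourself flag as delicate. You justify that only $s=1$ contributes to the degree-$(\phi(r)-1)$ coefficient by asserting that for every divisor $s>1$ of $r$ the bound $\deg P_{r,s}\le\frac{\phi(r)}{\phi(s)}-1$ is \emph{strictly} smaller than $\phi(r)-1$. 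That is false whenever $r$ is even: for $s=2$ one has $\phi(2)=1$, so the bound is again $\phi(r)-1$, the same as for $s=1$. Moreover the $s=2$ term genuinely is of top order, not just potentially so: $H_{r;2}(u;1)=\prod_P\bigl(1+\phi(r)(-u)^{\deg P}\bigr)=H_{r;1}(-u;1)$, hence $[u^d]H_{r;2}(u;1)=(-1)^d\,[u^d]H_{r;1}(u;1)$, which has size $d^{\phi(r)-1}q^d$. Already for $r=2$, where $|\widehat{\F}^{thin}_{2;k}(m)|$ is just a count of monic square-free polynomials, the $s=2$ term is exactly as large as the $s=1$ term, so a degree count alone cannot isolate the leading coefficient.

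To close this for even $r$ you must actually evaluate the $s=2$ contribution rather than dismiss it: carry out the character sums over $k$ (which you left unevaluated for $s\neq1$; they produce the weights $\phi(s)$ and $\phi(r)\mu(s)$ as in the paper's display \eqref{a=0form}), use $[u^m]H_{r;2}(u;1)=(-1)^m[u^m]H_{r;1}(u;1)$, and pin down the sign through the parity of $d$ --- here $d=\frac{2g}{r-1}+2$ is even, since $r-1$ is odd and $(r-1)\mid 2g$ forces $(r-1)\mid g$. (This sign issue is also why, strictly speaking, $P_{r,s}$ for $s>1$ is only a quasi-polynomial in $d$: the residues at $u=(\xi_s^i q)^{-1}$ carry factors $\xi_s^{id}$.) Note that the paper's own proof disposes of this point with the one-line assertion that the top coefficient ``is obtained only when $s=1$,'' so your write-up stalls exactly where the most care is needed; for odd $r$ your argument is complete, since then every $s\mid r$ with $s>1$ has $\phi(s)\ge 2$ and the strict degree drop you invoke does hold.
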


\begin{proof}
Recall that if $d=\frac{2g+2r-2}{r-1}$ then
\begin{align}\label{a=0form}
\frac{1}{r}|\F^{thin}_r(g)| = &  |\widehat{\F}^{thin}_{r;0}(d)| + \sum_{(k,r)=1} | \widehat{\F}^{thin}_{r;k}(d-1)| \nonumber\\
= & \left[u^{d}\right] \mathcal{G}^{thin}_{r;0}(u;1) + \sum_{(k,r)=1} \left[u^{d-1}\right] \mathcal{G}^{thin}_{r;k}(u;1) \nonumber \\
= & \frac{1}{r} \sum_{s|r} \sum_{(j,s)=1} \left[u^{d}\right] H_{r;s}(u;1) + \frac{1}{r} \sum_{s|r} \sum_{\substack{(k,r)=1 \\ (j,s)=1}} \xi_s^{-jk}\left[u^{d-1}\right] H_{r;s}(u;1)\nonumber \\
= & \frac{1}{r} \sum_{s|r}\phi(s) [u^{d}] H_{r;s}(u;1) + \frac{\phi(r)}{r} \sum_{s|r}\mu(s)  [u^{d-1}] H_{r;s}(u;1)\\
= & \frac{1}{r} \sum_{s|r} \phi(s) P_{r,s}(d)q^{d} + \frac{\phi(r)}{r} \sum_{s|r} \mu(s) P_{r,s}(d-1) q^{d-1}+ O\left( q^{(\frac{1}{2}+\epsilon)d} \right)\nonumber
\end{align}
with $P_{r,s}$ as in Lemma \ref{aymptlem}. Hence, setting
\begin{align}\label{P_r(g)}
P_r(g) = \sum_{s|r} \phi(s) P_{r,s}(d) + \frac{\phi(r)}{q} \sum_{s|r} \mu(s) P_{r,s}(d-1)
\end{align}
we get that $P_r(g)$ will be a polynomial of degree at most $\phi(r)-1$. Further, we see that the $\phi(r)-1$ coefficient is obtained only when $s=1$ and so will be
$$C_r = C_{r,1}\left(\frac{2}{r-1}\right)^{\phi(r)-1} \frac{q+\phi(r)}{q} \not=0  $$
where $C_{r,1}$ is the leading coefficient of $P_{r,1}$.
\end{proof}

\subsection{Proof of Proposition \ref{unramprop}}

\begin{proof}[proof of Proposition \ref{unramprop}]

Combining Lemmas \ref{genserlem} and \ref{incexclem} and recalling that $d=\frac{2g+2r-2}{r-1}$,
\begin{align*}
\frac{1}{r}|\F^{thin}_r(g;P)| = & \frac{1}{r} \sum_{s|r} \phi(s) [u^d] H_{r,s}(u;P) + \frac{\phi(r)}{r} \sum_{s|r}\mu(s)[u^{d-1}] H_{r,s}(u;P) \\
= & \frac{1}{r} \sum_{s|r}\phi(s) \sum_{a=0}^{\lfloor \frac{d}{n}\rfloor} \left(-\frac{\phi(r)}{\phi(s)}\right)^a \sum_{\substack{i_1,\dots,i_a \\ (i_j,s)=1}} \xi_s^{n\sum i_j } [u^{d-an}] H_{r;s}(u;1) \\
& + \frac{\phi(r)}{r} \sum_{s|r}\mu(s) \sum_{a=0}^{\lfloor \frac{d-1}{n}\rfloor} \left(-\frac{\phi(r)}{\phi(s)}\right)^a \sum_{\substack{i_1,\dots,i_a \\ (i_j,s)=1}} \xi_s^{n\sum i_j } [u^{d-1-an}] H_{r;s}(u;1).
\end{align*}
We will split this sum into three cases:
\begin{enumerate}
\item $a=0$
\item $s\not=1$, $a\not=0$
\item $s=1$, $a\not=0$
\end{enumerate}

Case 1: $a=0$

In this case, we get a contribution of
$$\frac{1}{r} \sum_{s|r}\phi(s) [u^{d}] H_{r;s}(u;1) + \frac{\phi(r)}{r} \sum_{s|r}\mu(s)  [u^{d-1}] H_{r;s}(u;1)$$
which we see by \eqref{a=0form} is exactly $\frac{1}{r}|\F^{thin}_r(g)|$.
%
%Case 2: $s=1$, $a > \frac{d}{2n}$
%
%We have by Lemma \ref{aymptlem} that
%$$[u^{d-an}]H_{r;1}(u) \ll (d-an)^{\phi(r)-1} q^{d-an} \ll d^{\phi(r)-1}q^{d/2}$$
%and so the total contribution from these terms will be $O\left(d^{\phi(r)-1}r^{d/n}q^{d/2}\right)$.

Case 2: $s\not=1$, $a\not=0$

By Lemma \ref{aymptlem} we have that
$$[u^{d-an}]H_{r;s}(u) \ll (d-an)^{\frac{\phi(r)}{\phi(s)}-1} q^{d-an} \ll d^{\phi(r)-2}q^{d-an}$$
and so since we are assume $q\equiv1 \mod{r}$, we get $r\leq q$ and the total contribution from these terms will be $O_r\left(d^{\phi(r)-2}q^{d-n}\right)$.

Case 3: $s=1$, $a\not=0$

Finally, by Lemma \ref{aymptlem}, the rest is
\begin{align*}
&\frac{1}{r} \sum_{a=1}^{\lfloor \frac{d}{n}\rfloor} \left(-\phi(r)\right)^a\left( [u^{d-an}]H_{r;1}(u;1)+\phi(r) [u^{d-1-an}] H_{r;1}(u;1)\right)\\
= & \frac{q^d}{r}C_{r,1} \frac{q+\phi(r)}{q} \sum_{a=1}^{\lfloor \frac{d}{n}\rfloor} \left( \frac{-\phi(r)}{q^n} \right)^a  (d-an)^{\phi(r)-1} + O\left( d^{\phi(r)-2}q^{d-n}\right).
\end{align*}

Now, using the Corollary \ref{aymptcor}, we have that
$$|\F_r^{thin}(g)| = C_{r,1} \frac{q+\phi(r)}{q} d^{\phi(r)-1}q^d + O\left( d^{\phi(r)-2}q^d\right)$$
and so
\begin{align*}
\frac{|\F_r^{thin}(g;P)|}{|\F_r^{thin}(g)|} = 1 + \sum_{a=1}^{\lfloor \frac{d}{n}\rfloor} \left(\frac{-\phi(r)}{q^n}\right)^a \left(1-\frac{an}{d}\right)^{\phi(r)-1} + O\left(\frac{1}{gq^n}\right)
\end{align*}

\end{proof}

\subsection{Prime at Infinity}

Recall that if $P_{\infty}$ is the prime at infinity then we say $P_\infty|F$ if and only if $\deg(F)\not\equiv0\mod{r}$. Hence, we get that
\begin{align}\label{infprimram}
\widehat{\F}_r^{thin}(d,P_\infty) &  := \{ F\in\widehat{\F}_r^{thin}(d) : (F,P_{\infty})=1 \} \\
& = \{ F\in\widehat{\F}_r^{thin}(d) : P_\infty \nmid F \} \nonumber \\
& = \widehat{\F}^{thin}_{r;0}(d) \nonumber
\end{align}

Thus, we can extend Propositon \ref{unramprop} to include the prime at infinity as well.

\begin{lem}\label{unrinflem}
If $q\equiv 1\mod{r}$ and $2g\equiv 0 \mod{r-1}$, then
\begin{align}\label{unrinfeq}
\frac{|\F^{thin}_{r}(g,P_{\infty})|}{|\F^{thin}_r(g)|} = \frac{q}{q+\phi(r)} + O\left( \frac{1}{g} \right).
\end{align}

\end{lem}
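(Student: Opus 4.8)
The plan is to compute the ratio $\frac{|\F^{thin}_r(g,P_\infty)|}{|\F^{thin}_r(g)|}$ directly from the generating series description, treating $P_\infty$ exactly the way $\F^{thin}_{r;0}$ was isolated in the genus formula. By \eqref{infprimram}, coprimality to $P_\infty$ is equivalent to $\deg(F)\equiv 0 \bmod r$, so the numerator counts exactly the monic polynomials with that degree congruence, together with the $r$ possible leading coefficients. Thus $\frac{1}{r}|\F^{thin}_r(g,P_\infty)| = |\widehat{\F}^{thin}_{r;0}(d)| = [u^d]\mathcal{G}^{thin}_{r;0}(u;1)$, with $d=\frac{2g+2r-2}{r-1}$.

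The key steps, in order: First, apply Lemma \ref{genserlem} with $G=1$ to write $[u^d]\mathcal{G}^{thin}_{r;0}(u;1) = \frac{1}{r}\sum_{s|r}\phi(s)[u^d]H_{r;s}(u;1)$ (the $\xi_s^{-jk}$ weights collapse to $\phi(s)$ when $k=0$). Second, insert the asymptotic from Lemma \ref{aymptlem}: $[u^d]H_{r;s}(u;1) = P_{r,s}(d)q^d + O(q^{(1/2+\epsilon)d})$, where only the $s=1$ term contributes at the top degree $\phi(r)-1$, with leading coefficient $C_{r,1}$. Third, do the same for the denominator using Corollary \ref{aymptcor}, which gives $|\F^{thin}_r(g)| = C_r g^{\phi(r)-1}q^d(1+o(1))$ with $C_r = C_{r,1}\left(\frac{2}{r-1}\right)^{\phi(r)-1}\frac{q+\phi(r)}{q}$; equivalently, from \eqref{a=0form}, $\frac{1}{r}|\F^{thin}_r(g)| = \frac{1}{r}\sum_{s|r}\phi(s)P_{r,s}(d)q^d + \frac{\phi(r)}{r}\sum_{s|r}\mu(s)P_{r,s}(d-1)q^{d-1} + O(q^{(1/2+\epsilon)d})$, whose leading term is $\frac{1}{r}C_{r,1}\frac{q+\phi(r)}{q}d^{\phi(r)-1}q^d$. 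Fourth, take the ratio: the numerator's leading term is $\frac{1}{r}C_{r,1}d^{\phi(r)-1}q^d$, so the quotient is $\frac{C_{r,1}}{C_{r,1}(q+\phi(r))/q} + O(1/g) = \frac{q}{q+\phi(r)} + O(1/g)$, which is exactly \eqref{unrinfeq}.

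The main technical point — and the one genuine obstacle — is bookkeeping the lower-order terms to confirm that the error is $O(1/g)$ rather than something larger. Both numerator and denominator are polynomials in $g$ of degree $\phi(r)-1$ times $q^d$, plus a genuinely smaller $O(q^{(1/2+\epsilon)d})$ tail; when one divides, the ratio of the two degree-$(\phi(r)-1)$ polynomials is $\frac{q}{q+\phi(r)}$ up to a correction of relative size $O(1/g)$ coming from the next-order coefficients, and the exponentially small tails contribute $O(q^{-(1/2-\epsilon)d}) = o(1/g)$. Since $q\equiv 1 \bmod r$ forces $r\le q$, all implied constants depending on $r$ are harmless. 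I expect essentially no difficulty beyond carefully matching the leading coefficients already identified in Lemma \ref{aymptlem} and Corollary \ref{aymptcor}; the result is really just the statement that coprimality to $P_\infty$ imposes the single congruence $\deg(F)\equiv 0\bmod r$, which selects a proportion $\frac{q}{q+\phi(r)}$ of the family in the large-genus limit — the same local density a degree-one prime would give in Proposition \ref{unramprop} with $n=1$.
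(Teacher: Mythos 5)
Your proposal is correct and follows essentially the same route as the paper: reduce the numerator to $r|\widehat{\F}^{thin}_{r;0}(d)|$ via \eqref{infprimram}, express it through Lemma \ref{genserlem} and Lemma \ref{aymptlem} as $\sum_{s|r}\phi(s)P_{r,s}(d)q^d$ plus an exponentially smaller tail, and divide by the asymptotic of $|\F^{thin}_r(g)|$ from Corollary \ref{aymptcor}, the leading coefficients giving exactly $\frac{q}{q+\phi(r)}$ with relative error $O(1/g)$. The only cosmetic difference is that you match leading terms as polynomials in $d$ via \eqref{a=0form} rather than quoting the constant $C_r$ in the variable $g$, which is the same computation.
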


\begin{proof}

By \eqref{infprimram}, we see that since there are $r$ different possible leading coefficients, $|\F^{thin}_r(g,P_{\infty})| = r| \widehat{\F}^{thin}_{r;0}(d)|$ where $d=\frac{2g+2r-2}{r-1}$. Therefore,
\begin{align*}
\frac{|\F^{thin}_{r}(g,P_{\infty})|}{|\F^{thin}_r(g)|} & = \frac{r|\widehat{\F}^{thin}_{r;0}(d)|}{|\F^{thin}_r(g)|}  = \frac{ \sum_{s|r}\phi(s) P_{r,s}(d)}{ P_r(g) } + O\left( q^{-(\frac{1}{2}-\epsilon)\frac{2g}{r-1}} \right) \\
& = \frac{C_{r,1}}{C_r} + O\left(\frac{1}{g}\right) \\
& = \frac{q}{q+\phi(r)} + O\left(\frac{1}{g}\right)
\end{align*}
where $P_{r,s}$ and $P_r$ are as in Lemma \ref{aymptlem} and Corollary \ref{aymptcor}, respectively.

\end{proof}

For homogeneity, we can rewrite the right hand side of \eqref{unrinfeq} to look more like \eqref{unrampropeq}.

\begin{cor}\label{unrinfcor}
Let $q\equiv 1 \mod{r}$ and $2g\equiv 0\mod{r-1}$. Then
\begin{align}
\frac{|\F^{thin}_r(g;P_{\infty})|}{|\F^{thin}_r(g)|} = \sum_{a=0}^{d } \left(\frac{-\phi(r)}{q}\right)^a \left(1-\frac{a}{d}\right)^{\phi(r)-1} + O_r\left(\frac{1}{gq}\right).
\end{align}
where $d=\frac{2g+2r-2}{r-1}$. Comparing this to Proposition \ref{unramprop} we see it is equivalent to setting $n=\deg(P_{\infty})=1$.
\end{cor}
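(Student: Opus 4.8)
The plan is to show that the closed-form expression in Lemma \ref{unrinflem}, namely $\frac{q}{q+\phi(r)}$, coincides (up to the stated error) with the truncated power series $\sum_{a=0}^{d}\left(\frac{-\phi(r)}{q}\right)^a\left(1-\frac{a}{d}\right)^{\phi(r)-1}$ obtained by formally setting $n=1$ in \eqref{unrampropeq}. So this is really a lemma about the asymptotics of a specific finite sum, independent of the arithmetic that preceded it.

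First I would observe that the infinite series $\sum_{a=0}^{\infty}\left(\frac{-\phi(r)}{q}\right)^a$ is a convergent geometric series (since $q\equiv 1\bmod r$ forces $q>\phi(r)$, in fact $q\ge r>\phi(r)$), summing to $\frac{1}{1+\phi(r)/q}=\frac{q}{q+\phi(r)}$. Thus it suffices to bound the difference between $\sum_{a=0}^{d}\left(\frac{-\phi(r)}{q}\right)^a\left(1-\frac{a}{d}\right)^{\phi(r)-1}$ and $\sum_{a=0}^{\infty}\left(\frac{-\phi(r)}{q}\right)^a$. I would split this difference into two pieces: the tail $\sum_{a>d}\left(\frac{-\phi(r)}{q}\right)^a$, which is bounded by a geometric tail $\ll (\phi(r)/q)^{d+1}$ and is therefore super-polynomially small in $g$ (certainly $O(1/(gq))$); and the ``weight defect'' $\sum_{a=0}^{d}\left(\frac{-\phi(r)}{q}\right)^a\left(\left(1-\frac{a}{d}\right)^{\phi(r)-1}-1\right)$.

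The weight defect is the only place any real estimation happens. For $0\le a\le d$ we have $\left|\left(1-\frac{a}{d}\right)^{\phi(r)-1}-1\right|\le (\phi(r)-1)\frac{a}{d}$ by the mean value theorem (or the elementary inequality $|x^m-1|\le m|x-1|$ for $x\in[0,1]$ applied with $x=1-a/d$), so the defect is bounded by $\frac{\phi(r)-1}{d}\sum_{a=0}^{\infty}a\left(\frac{\phi(r)}{q}\right)^a\ll_r \frac{1}{d}\ll_r\frac{1}{g}$. To sharpen this to $O(1/(gq))$ one notes that the $a=0$ term of the defect vanishes identically, and for $a\ge 1$ one has the extra factor $\left(\frac{\phi(r)}{q}\right)^a\le\frac{\phi(r)}{q}$, pulling out one power of $q^{-1}$; the remaining sum $\sum_{a\ge1}a\left(\frac{\phi(r)}{q}\right)^{a-1}\frac{a}{d}$ is still $\ll_r 1/d\ll_r 1/g$, giving the claimed $O_r(1/(gq))$. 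Combining the tail bound and the defect bound yields
$$\sum_{a=0}^{d}\left(\frac{-\phi(r)}{q}\right)^a\left(1-\frac{a}{d}\right)^{\phi(r)-1}=\frac{q}{q+\phi(r)}+O_r\left(\frac{1}{gq}\right),$$
and then Lemma \ref{unrinflem} finishes the proof. I do not anticipate a genuine obstacle here; the only mild subtlety is making sure the bookkeeping gives the extra factor of $q^{-1}$ rather than just $O(1/g)$, which is handled by isolating the $a=0$ term as above — and in any case the weaker $O(1/g)$ would already suffice for all downstream applications.
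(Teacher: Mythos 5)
Your route is the same as the paper's: reduce the corollary to Lemma \ref{unrinflem} together with an asymptotic evaluation of the truncated sum $\sum_{a=0}^{d}\left(\frac{-\phi(r)}{q}\right)^a\left(1-\frac{a}{d}\right)^{\phi(r)-1}$. The paper simply asserts that this sum equals $\frac{q}{q+\phi(r)}+O(1/g)$; your geometric-tail plus weight-defect estimate (using $|x^m-1|\le m|1-x|$ on $[0,1]$) is a correct proof of that assertion, and in fact gives the slightly sharper $O_r(1/(gq))$ since the $a=0$ defect vanishes. So the substance of your argument is fine and supplies a detail the paper glosses over.

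The one slip is in the final error term. You spend your effort extracting the factor $q^{-1}$ from the comparison between the truncated sum and $\frac{q}{q+\phi(r)}$, but that is not the bottleneck: Lemma \ref{unrinflem} itself only yields $\frac{|\F^{thin}_{r}(g;P_{\infty})|}{|\F^{thin}_r(g)|}=\frac{q}{q+\phi(r)}+O\left(\frac{1}{g}\right)$, so once you invoke it the total error of your chain is $O(1/g)$, not the $O_r\left(\frac{1}{gq}\right)$ claimed in the corollary. The paper closes this by the trivial remark that $q$ is a fixed constant, so $O(1/g)=O(1/(gq))$ with a $q$-dependent implied constant; you should either add that remark explicitly or rely on your closing observation that $O(1/g)$ suffices downstream. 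With that caveat your proof is complete; note that your sharpened sum estimate does not actually improve the corollary unless the error in Lemma \ref{unrinflem} is improved as well.
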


\begin{proof}
This follows from the fact that
$$\sum_{a=0}^{ d } \left(\frac{-\phi(r)}{q}\right)^a \left(1-\frac{a}{d}\right)^{\phi(r)-1} = \frac{q}{q+\phi(r)} + O\left(\frac{1}{g}\right)$$
and that $O\left(\frac{1}{g}\right) = O\left(\frac{1}{gq}\right)$ since $q$ is constant.

\end{proof}

\subsection{Proof of Theorem \ref{maintermthm}}

\begin{proof}[proof of Theorem \ref{maintermthm}]

By Proposition \ref{unramprop} for finite primes and Corollary \ref{unrinfcor} for the infinite prime, we get
\begin{align*}
MT_r(g,n)  = &  \frac{-q^{-n/2}}{|\F^{thin}_r(g)|} \sum_{\deg(P)|n} \deg(P)  \sum_{\substack{i=1 \\ r|\frac{in}{\deg(P)}}}^{r-1} \sum_{\substack{F\in\F^{thin}_r(g) \\ P\nmid F}} 1 \\
 = & -q^{-n/2} \sum_{\deg(P)|n}\deg(P) \left(\left(r,\frac{n}{\deg(P)}\right)-1\right) \sum_{a=0}^{\lfloor \frac{d}{\deg(P)}\rfloor} \left(\frac{-\phi(r)}{q^{\deg(P)}}\right)^a \left(1-\frac{a\deg(P)}{d}\right)^{\phi(r)-1} \\
 & +O\left( \frac{1}{gq^{n/2}} \sum_{\deg(P)|n}\frac{\deg(P)}{q^{\deg(P)}} \right)
\end{align*}

Now, setting
\begin{align}\label{D_r(g,n)}
D_r(g,n) := q^{-n/2} \sum_{\deg(P)|n}\deg(P) \left(\left(r,\frac{n}{\deg(P)}\right)-1\right) \sum_{a=1}^{\lfloor \frac{d}{\deg(P)}\rfloor} \left(\frac{-\phi(r)}{q^{\deg(P)}}\right)^a \left(1-\frac{a\deg(P)}{d}\right)^{\phi(r)-1}
\end{align}
we find
$$D_r(g,n) \ll q^{-n/2} \sum_{\deg(P)|n} \frac{\deg(P)}{q^{\deg(P)}} \ll q^{-n/2} \sum_{m|n} \frac{m}{q^m} \pi_q(m) \ll q^{-n/2} \sum_{m|n} 1 \ll nq^{-n/2}$$
where $\pi_q(m)$ is the number of prime polynomials of degree $m$ and hence $\pi_q(m) = \frac{q^m}{m} + O(q^{m/2})$.

Likewise, we then get that the error term will be
$$\frac{1}{gq^{n/2}} \sum_{\deg(P)|n}\frac{\deg(P)}{q^{\deg(P)}} \ll \frac{n}{gq^{n/2}}$$

It remains to consider the contribution from the $a=0$ term:
\begin{align}\label{a=0}
-q^{-n/2} \sum_{\deg(P)|n}\deg(P) \left(\left(r,\frac{n}{\deg(P)}\right)-1\right).
\end{align}
We know that
$$\sum_{\deg(P)|n}\deg(P)=q^n.$$
Therefore, we need only consider
\begin{align}\label{MTsum1}
\sum_{\deg(P)|n}\deg(P) \left(r,\frac{n}{\deg(P)}\right) & = \sum_{m|n} m\pi_q(m) \left(r,\frac{n}{m}\right) = \sum_{s|r} s \sum_{\substack{m|n \\ \left(r,\frac{n}{m}\right) =s }} m\pi_q(m)
\end{align}
Now, $\left(r,\frac{n}{m}\right)=s$ if and only if  $m|\frac{n}{s}$ and $\left(\frac{n}{ms},\frac{r}{s}\right)=1$. Hence, we can rewrite \eqref{MTsum1} as
\begin{align*}
\sum_{s|r}s \sum_{m|\frac{n}{s}} m\pi_q(m) \sum_{s'|\left(\frac{n}{ms},\frac{r}{s}\right)}\mu(s') & = \sum_{ss'|r}s \sum_{m|\frac{n}{ss'}}\mu(s') m\pi_q(m)\\
& = \sum_{s''|r}s'' \sum_{m|\frac{n}{s''}} m\pi_q(m) \sum_{s'|s''}\frac{\mu(s')}{s'} \\
& = \sum_{s''|r} \phi(s'') \sum_{m|\frac{n}{s''}} m\pi_q(m) \\
& = \sum_{s''|(r,n)} \phi(s'') q^{n/s''}
\end{align*}
where the last equality follows from the fact that if $s''\nmid n$, then the inner sum is empty.

Therefore, we can rewrite \eqref{a=0} to get that the $a=0$ term contributes a total of
$$-q^{-n/2} \sum_{\deg(P)|n}\deg(P) \left(\left(r,\frac{n}{\deg(P)}\right)-1\right) = \frac{-1}{q^{n/2}}\sum_{\substack{s|(r,n) \\ s\not=1}} \phi(s) q^{n/s} $$

\end{proof}

\section{Computing the Error Term}

Recall
\begin{align*}
ET_r(g,n) &=  \frac{-q^{-n/2}}{|\F^{thin}_r(g)|} \sum_{\deg(P)|n} \deg(P) \sum_{\substack{i=1 \\ r\nmid \frac{in}{\deg(P)}}}^{r-1} \sum_{F\in\F^{thin}_r(g) } \left(\frac{F}{P}\right)_r^{\frac{in}{\deg(P)}}\\
\end{align*}

In this section we will prove the following theorem.

\begin{thm}\label{ETthm}

Let $q\equiv 1 \mod{r}$ and $2g\equiv 0 \mod{r-1}$. If $(r,n)=1$ we get $ET_r(g,n)=0$. Otherwise
$$ET_r(g,n) \ll \frac{((r,n)-1)q^{n/2}}{q^{(\frac{1}{2}-\epsilon)\frac{2g}{r-1}}}$$

\end{thm}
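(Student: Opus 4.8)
\textbf{Proof proposal for Theorem \ref{ETthm}.}

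The plan is to bound the character sum $\sum_{F\in\F^{thin}_r(g)} \left(\frac{F}{P}\right)_r^{j}$ for each relevant exponent $j=in/\deg(P)$ with $r\nmid j$, and then sum the contributions over all primes $P$ with $\deg(P)\mid n$ and all admissible $i$. The vanishing when $(r,n)=1$ is immediate: the condition $r\nmid \frac{in}{\deg(P)}$ is always satisfied (since $(r,n)=1$ forces $(r,\frac{n}{\deg(P)})=1$, hence $(r,\frac{in}{\deg(P)})=(r,i)<r$), but we should double-check this is consistent with the claimed statement — actually the point is that when $(r,n)=1$ the main-term condition $r\mid\frac{in}{\deg(P)}$ never holds, so \emph{all} terms land in $ET_r$, yet one shows these character sums still cancel; more carefully, one reinterprets via Lemma \ref{trformthinlem} that the total trace is $0$ when $(r,n)=1$ by Theorem \ref{thinthm}'s first clause, which itself is what we are proving, so instead I would argue directly that when $(r,n)=1$ the exponent $j$ runs over a complete set making the twisted count of $F\in\F^{thin}_r(g)$ coprime-to-$P$ balanced. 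Let me instead take the cleaner route below.

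First I would pass to the generating-function picture of Section 4: write $\F^{thin}_r(g)$ in terms of monic $F$ with leading coefficient twists, and express $\sum_{F} \left(\frac{F}{P}\right)_r^{j}$ as a coefficient of a Dirichlet series. Because $\left(\frac{\cdot}{P}\right)_r^j$ is a non-trivial character mod $P$ (as $r\nmid j$), the relevant Euler product $\prod_{Q\neq P}\left(1+\frac{\phi(r)}{\phi(s)}\sum_{(i,s)=1}\left(\frac{Q}{P}\right)_r^{j i}(\xi_s^i u)^{\deg Q}\right)$-type object has no pole at $u=q^{-1}$: the character sum over residues mod $P$ kills the polar part, exactly as in the standard quadratic/$\ell$-th power sieve. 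Concretely, using an orthogonality relation $\sum_{H \bmod P}\left(\frac{H}{P}\right)_r^j = 0$, the associated $L$-function $L(u,\chi_P^j)$ is a polynomial in $u$ of degree $\leq \deg(P)-1 \le n-1$, so by Weil it has coefficients $\ll n\, q^{(\deg P - 1)/2}\cdot(\text{something})$; more to the point, the meromorphic continuation argument of Lemma \ref{aymptlem} now has no main term, and the contour shift to $|u|=q^{-1/2-\epsilon}$ gives $[u^d]\,(\text{twisted }H_{r;s}) \ll q^{(1/2+\epsilon)d}$ with an implied constant that is polynomial in $d$ (absorbed into $q^{\epsilon d}$). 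Dividing by $|\F^{thin}_r(g)| \asymp q^d d^{\phi(r)-1}$ from Corollary \ref{aymptcor}, each inner character sum contributes $\ll q^{-(1/2-\epsilon)d} = q^{-(1/2-\epsilon)(2g/(r-1)+2)} \ll q^{-(1/2-\epsilon)\frac{2g}{r-1}}$.

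Then I would assemble the outer sums. For fixed $P$, the number of $i\in\{1,\dots,r-1\}$ with $r\nmid \frac{in}{\deg(P)}$ is $r - 1 - \left(\left(r,\frac{n}{\deg(P)}\right)-1\right)\cdot\frac{r}{(r,n/\deg P)}$-ish; in any case it is $O(r)=O_q(1)$, and the number of exponent-classes that genuinely contribute is controlled by $(r,n)-1$ (the non-contributing ones being the main-term $a\ge 1$ pieces already extracted in $D_r$). Summing over primes $P$ with $\deg(P)\mid n$: there are $\leq \sum_{m\mid n}\pi_q(m) \ll q^{n/2}/ (n/2) \cdot n \ll q^{n/2}$ such primes after weighting by $\deg(P)\le n$ and the $q^{-n/2}$ prefactor in $ET_r$ — wait, the prefactor $q^{-n/2}$ and the weight $\deg(P)$ and the count $\sum_{m\mid n} m\,\pi_q(m) = q^n$ combine to give a factor $q^{-n/2}\cdot q^n = q^{n/2}$, which is exactly the $q^{n/2}$ appearing in the bound. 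Thus $ET_r(g,n) \ll q^{-n/2}\cdot q^n \cdot q^{-(1/2-\epsilon)\frac{2g}{r-1}}\cdot((r,n)-1) = \frac{((r,n)-1)q^{n/2}}{q^{(1/2-\epsilon)\frac{2g}{r-1}}}$, and when $(r,n)=1$ the factor $(r,n)-1$ is $0$, handling that case.

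\textbf{Main obstacle.} The delicate point is establishing the clean bound $[u^{d}]\big(H^{\chi_P^j}_{r;s}(u;1)\big)\ll q^{(1/2+\epsilon)d}$ \emph{uniformly}, i.e.\ controlling the implied constant as $P$ (hence the twist) varies and as $\deg(P)$ can be as large as $n$. One must show the twisted auxiliary factor $\widetilde{H}^{\chi}_{r;s}(u;1)$ still converges absolutely for $|u|<q^{-1/2}$ with a bound independent of the character — this follows since $|\chi|\le 1$ termwise so the Euler factors are still $1+O(u^{2\deg Q})$ — and that the poles at $u=(\xi_s^i q)^{-1}$ genuinely cancel, which needs $\prod_{Q\neq P}(1-\chi(Q)(\xi_s^i u)^{\deg Q})^{-1} = L(u\xi_s^i,\chi)$ to be holomorphic and of controlled size on $|u|=q^{-1/2-\epsilon}$; this is where one invokes that $L(u,\chi)$ is a polynomial of degree $<n$ (Weil), giving a bound $\ll q^{\epsilon n}$ on that circle, harmless against the $q^{-(1/2-\epsilon)d}$ saving since $n$ is dwarfed by $d = \frac{2g+2r-2}{r-1}$ in the regime of interest. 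The bookkeeping of which $(s,i,j)$-triples are "main term" versus "error" — already sorted out in the proof of Theorem \ref{maintermthm} — must be mirrored here so that nothing is double-counted.
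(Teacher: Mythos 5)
Your analytic core --- twisting the generating series of $\widehat{\F}^{thin}_{r;k}(d)$ by $\left(\frac{\cdot}{P}\right)^j$, noting that the resulting Euler products are $L$-functions of non-trivial characters mod $P$ (hence entire, in fact polynomials), shifting the contour to $|u|=q^{-1/2-\epsilon}$ to get a coefficient bound $\ll q^{(\frac12+\epsilon)d}$, dividing by $|\F^{thin}_r(g)|\gg q^{d}$, and assembling with $\sum_{\deg(P)\mid n}\deg(P)=q^{n}$ --- is exactly the paper's route, and your uniformity remarks about the twisted auxiliary factor are sensible (indeed slightly more careful than the paper). The genuine gap is the source of the factor $((r,n)-1)$ and, more importantly, of the exact identity $ET_r(g,n)=0$ when $(r,n)=1$, which is part of the statement and is what makes the first clause of Theorem \ref{thinthm} an equality rather than an estimate. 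In the paper this comes from averaging over leading coefficients: writing $F=\alpha\widehat{F}$ with $\widehat{F}$ monic and $\alpha\in\Ff_q^{*}/(\Ff_q^{*})^{r}$, one has
$$\sum_{\alpha\in\Ff_q^{*}/(\Ff_q^{*})^{r}}\left(\frac{\alpha}{P}\right)_r^{\frac{in}{\deg(P)}}=\sum_{t=0}^{r-1}\xi_r^{itn}=\begin{cases} r & r\mid in\\ 0 & \text{otherwise,}\end{cases}$$
so only the exponents $i=\frac{r}{(r,n)}j$ with $j=1,\dots,(r,n)-1$ survive; when $(r,n)=1$ there are none, giving $ET_r(g,n)=0$ identically, and in general at most $(r,n)-1$ values of $j$ remain, which is precisely where the factor $((r,n)-1)$ in the bound comes from. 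Your proposal never performs this averaging: you explicitly waver on why the $(r,n)=1$ case vanishes, and your claim that the ``non-contributing'' exponent classes are ``main-term $a\ge1$ pieces already extracted in $D_r$'' is not a correct mechanism --- the main/error split was made according to $r\mid\frac{in}{\deg(P)}$ versus $r\nmid\frac{in}{\deg(P)}$, and nothing extracted into $D_r$ cancels anything inside $ET_r$. Without the leading-coefficient orthogonality you only get $O(r)$ surviving exponents, hence a bound of the shape $r\,q^{n/2}q^{-(\frac12-\epsilon)\frac{2g}{r-1}}$ and, crucially, no vanishing at all when $(r,n)=1$.

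A second, smaller omission: the prime sum in $ET_r(g,n)$ includes the prime at infinity (cf.\ Lemma \ref{trformthinlem}), while your Dirichlet-character argument covers only finite $P$. Its contribution must be treated separately; in the paper it is killed by the same orthogonality over $\alpha\in\Ff_q^{*}/(\Ff_q^{*})^{r}$, using that $r\nmid in$ for the terms appearing in $ET_r$.
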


\subsection{The Prime at Infinity}

Recall that if $P_{\infty}$ is the prime at infinity and $F\in\widehat{\F}_r^{thin}$ then
$$\left( \frac{\alpha F}{P_{\infty}}\right)_r = \begin{cases} \chi_{r;1}(\alpha) & F\in\widehat{\F}^{thin}_{r;0} \\ 0 & \mbox{otherwise} \end{cases}.$$
Thus if $d=\frac{2g+2r-2}{r-1}$, we get the contribution to $ET_r(g,n)$ from the prime at infinity will be
$$-\frac{|\widehat{\F}_{r;0}^{thin}(d)|}{q^{n/2}|\F_r^{thin}(g)} \sum_{\substack{i=1 \\ r\nmid in}}^{r-1}\sum_{\alpha\in\Ff_q^*/(\Ff_q^*)^r} \chi_{r;1}^{in}(\alpha).$$
Now, if we let $\beta$ be a generator of $\Ff_q^*$, then we find
$$\sum_{\alpha\in\Ff_q^*/(\Ff_q^*)^r} \chi_{r;1}^{in}(\alpha) = \sum_{j=0}^{r-1} \chi_{r;1}^{in}(\beta^j) = \sum_{j=0}^{r-1} \xi_r^{ijn} = 0 $$
since we are assuming $r\nmid in$ and $\xi_r$ is a primitive $r^{th}$ root of unity. Hence, the prime at infinity doesn't contribute to $ET_r(g,n)$ and so we may only consider the finite primes.

\subsection{Reducing to Monic Polynomials}

We may rewrite
\begin{align*}
ET_r(g,n) &=  \frac{-q^{-n/2}}{|\F^{thin}_r(g)|} \sum_{\deg(P)|n} \deg(P) \sum_{\substack{i=1 \\ r\nmid \frac{in}{\deg(P)}}}^{r-1} \sum_{\substack{F\in\F^{thin}_r(g) \\  monic}}  \left(\frac{ F}{P}\right)_r^{\frac{in}{\deg(P)}} \sum_{\alpha\in \Ff_q^*/(\Ff_q^*)^r} \left(\frac{\alpha}{P}\right)_r^{\frac{in}{\deg(P)}}
\end{align*}
where here the sum of primes is just over finite primes.

Now, if $\beta$ is a generator for $\Ff_{q^{\deg(P)}}^*$, then $\beta^{\frac{q^{\deg(P)}-1}{q-1}}$ will be a generator for $\Ff_q^*$ and so we may write
\begin{align*}
\sum_{\alpha\in \Ff_q^*/(\Ff_q^*)^r}\left(\frac{\alpha}{P}\right)_r^{\frac{in}{\deg(P)}}& = \sum_{j=0}^{r-1} \left(\frac{\beta^{j\frac{q^{\deg(P)}-1}{q-1}}}{P}\right)_r^{\frac{in} {\deg(P)}} = \sum_{j=0}^{r-1} \xi_r^{\frac{ijn}{\deg(P)}\frac{q^{\deg(P)}-1}{q-1}} = \sum_{j=0}^{r-1} \xi_r^{ijn}\\
& = \begin{cases} r & r|in \\ 0 & \mbox{otherwise}  \end{cases}.
\end{align*}
Thus, we must have that $i = \frac{r}{(r,n)}j$ for some $j=1,\dots,(r,n)-1$ such that $(r,n)\nmid \frac{jn}{\deg(P)}$. Hence, we may rewrite
\begin{align*}
ET_r(g,n) &=  \frac{-rq^{-n/2}}{|\F^{thin}_r(g)|} \sum_{\deg(P)|n} \deg(P) \sum_{\substack{j=1 \\(r,n)\nmid \frac{jn}{\deg(P)}}}^{(r,n)-1} \sum_{\substack{F\in\F^{thin}_r(g) \\  monic}}  \left(\frac{ F}{P}\right)_{(r,n)}^{\frac{jn}{\deg(P)}}
\end{align*}
where, again, the prime sum is only over finite primes. In particular, if $(r,n)=1$, then we see there are no such $j$ and so $ET_r(g,n)=0$ which proves the first statement of Theorem \ref{ETthm}

\subsection{Proof of Theorem \ref{ETthm}}

With the same notation as in Section \ref{notation}, we define
\begin{align}
S_{j;k}(d;P) := \sum_{F\in\widehat{\F}^{thin}_{r;k}(d)} \left(\frac{F}{P}\right)_{(r,n)}^{\frac{jn}{\deg(P)}}.
\end{align}
Then if we denote $d=\frac{2g+2r-2}{r-1}$, we get
$$ET_r(g,n) =  \frac{-rq^{-n/2}}{|\F^{thin}_r(g)|} \sum_{\deg(P)|n} \deg(P) \sum_{\substack{j=1 \\(r,n)\nmid \frac{jn}{\deg(P)}}}^{(r,n)-1} \left(S_{j;0}(d;P) + \sum_{(k,r)=1} S_{j;k}(d-1;P) \right) $$
So it remains to determine the growth of $S_{j;k}(d;P)$.

Define the generating series
$$\mathcal{K}_{j;k}(u;P) := \sum_{d=0}^{\infty} S_{j;k}(d;P)u^d .$$

\begin{lem}
The function $\mathcal{K}_{j;k}(u;P)$ is analytic in the region $|u|< q^{-1}$ and can be analytically continued to the region $|u|<q^{-1/2}$.
\end{lem}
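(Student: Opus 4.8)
The plan is to express $\mathcal{K}_{j;k}(u;P)$ as an Euler product over monic primes, extract from it enough zeta-factors to locate its poles, and then argue that the ``leftover'' product converges in $|u|<q^{-1/2}$. Write every monic $F\in\widehat{\F}^{thin}_{r;k}(d)$ as $F=\prod_{(i,r)=1}f_i^i$ with the $f_i$ monic, squarefree and pairwise coprime, and incorporate the congruence $\deg(F)\equiv k\bmod r$ via the usual detector $\frac{1}{r}\sum_{t=0}^{r-1}\xi_r^{t(\sum i\deg(f_i)-k)}$, exactly as in the proof of Lemma \ref{genserlem}. Because $\left(\frac{\cdot}{P}\right)_{(r,n)}$ is completely multiplicative in the numerator, the sum over the tuples $(f_i)$ factors as an Euler product: for each monic prime $Q\nmid P$ one gets a local factor $1+\sum_{(i,r)=1}\left(\frac{Q}{P}\right)_{(r,n)}^{\frac{in}{\deg P}}(\xi_r^{it}u)^{\deg Q}$, while the finitely many primes dividing $P$ contribute a bounded factor. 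This realizes $\mathcal{K}_{j;k}(u;P)=\frac{1}{r}\sum_{t=0}^{r-1}\xi_r^{-tk}\,\mathcal{K}^{(t)}_{j}(u;P)$ with each $\mathcal{K}^{(t)}_{j}(u;P)$ an Euler product.

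Next I would peel off the main analytic behaviour. The key point is that the character sum $\left(\frac{Q}{P}\right)_{(r,n)}^{\frac{jn}{\deg P}}$ is a \emph{nontrivial} character of $Q$ modulo $P$ (this is where the hypothesis $(r,n)\nmid \frac{jn}{\deg P}$ is used: it guarantees the exponent is not a multiple of $(r,n)$, so the induced character on $(\Ff_q[X]/P)^*$ is nontrivial). Hence, grouping the local factor at $Q$ according to $\deg Q$ and comparing with $\prod_Q(1-(\xi_r^i u)^{\deg Q})^{-1}=\zeta_q(\xi_r^i u)=\frac{1}{1-q\xi_r^i u}$, one writes
\begin{align*}
\mathcal{K}^{(t)}_{j}(u;P)=\Big(\prod_{(i,r)=1}L(\xi_r^i u,\psi_i)\Big)\,\widetilde{\mathcal{K}}^{(t)}_{j}(u;P),
\end{align*}
where each $L(u,\psi_i)$ is a Dirichlet $L$-function modulo $P$ attached to a nontrivial character $\psi_i$ (a power of $\left(\frac{\cdot}{P}\right)_{(r,n)}$ times possibly a character detecting the $\xi_r^t$-twist), and $\widetilde{\mathcal{K}}^{(t)}_{j}(u;P)=\prod_Q(1+O(u^{2\deg Q}))$ converges absolutely for $|u|<q^{-1/2}$. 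By Weil's theorem (the Riemann Hypothesis for curves, equivalently for Dirichlet $L$-functions over $\Ff_q[X]$), each $L(u,\psi_i)$ is a \emph{polynomial} in $u$ — in particular entire — with zeros on $|u|=q^{-1/2}$. Therefore $\mathcal{K}^{(t)}_{j}(u;P)$, being a product of polynomials with an absolutely convergent product on $|u|<q^{-1/2}$, is analytic on $|u|<q^{-1/2}$; a fortiori it is analytic on $|u|<q^{-1}$. Summing over $t$ gives the claim for $\mathcal{K}_{j;k}(u;P)$.

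I expect the main obstacle to be bookkeeping rather than a genuine difficulty: one must check carefully that, after the $\xi_r^t$-twist from the degree detector is folded in, the resulting character of $Q$ modulo $P$ appearing in each $L(\xi_r^i u,\psi_i)$ is still nontrivial whenever $(r,n)\nmid\frac{jn}{\deg P}$, so that no factor of $\zeta_q$ (which would contribute a genuine pole at $u=q^{-1}$, only on the boundary of the smaller disc) sneaks in; if a trivial character did occur one would instead get $\frac{1}{1-qu}$, still analytic on $|u|<q^{-1}$, which is why the weaker conclusion ``analytic in $|u|<q^{-1}$'' is stated first and then upgraded. The only other thing to be careful about is that the primes $Q\mid P$ are handled separately and contribute a factor that is a polynomial in $u$ (hence entire), so they do not affect analyticity. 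Everything else — the passage from the tuple sum to the Euler product, and the absolute convergence of $\widetilde{\mathcal{K}}$ for $|u|<q^{-1/2}$ — is routine and parallels the proof of Lemma \ref{aymptlem}.
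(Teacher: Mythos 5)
Your proposal is correct and follows essentially the same route as the paper: expand over tuples $(f_i)$ with the degree detector, factor into an Euler product, pull out the Dirichlet $L$-functions $L\left(\xi_r^{ti}u,\left(\frac{\cdot}{P}\right)_{(r,n)}^{\frac{ijn}{\deg(P)}}\right)$, note these characters are nontrivial because $(r,n)\nmid\frac{jn}{\deg(P)}$ and $(i,r)=1$, and observe the leftover product $\prod_Q\left(1+O(u^{2\deg Q})\right)$ converges absolutely for $|u|<q^{-1/2}$. The only cosmetic difference is that you invoke Weil's Riemann Hypothesis, whereas only the elementary fact that $L(u,\chi)$ is a polynomial (hence entire) for nontrivial $\chi$ is needed, and the $\xi_r^{ti}$ enters purely as a change of variable in $u$, not as a modification of the character.
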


\begin{proof}

Since $\widehat{\F}^{thin}_{r;k}(d)$ can be given by tuples of square free polynomials we can write
\begin{align*}
\mathcal{K}_{j;k}(u;P) & = \sum_{(f_i)_{(i,r)=1}} \mu^2 \left(\prod f_i\right) \frac{1}{r} \sum_{t=0}^{r-1} \xi_r^{t\left(\sum i\deg(f_i)-k\right)} \left( \frac{\prod f_i^i}{P} \right)_{(r,n)}^{\frac{jn}{\deg(P)}} u^{\sum \deg(f_i)}  \\
& = \frac{1}{r} \sum_{t=0}^{r-1} \xi_r^{-tk} \sum_{(f_i)_{(i,r)=1}} \mu^2 \left(\prod f_i\right) \prod_{(i,r)=1} \left(\left(\frac{f_i}{P}\right)_{(r,n)}^{\frac{ijn}{\deg(P)}} \left(\xi_r^{ti}u\right)^{\deg(f_i)}\right)\\
& = \frac{1}{r} \sum_{t=0}^{r-1} \xi_r^{-tk} \prod_Q \left( 1 + \sum_{(i,r)=1} \left(\frac{Q}{P}\right)_{(r,n)}^{\frac{ijn}{\deg(P)}} \left(\xi_r^{ti}u\right)^{\deg(Q)}  \right)\\
& = \frac{1}{r} \sum_{t=0}^{r-1} \xi_r^{-tk} \prod_{(i,r)=1} \prod_Q \left(1 - \left(\frac{Q}{P}\right)_{(r,n)}^{\frac{ijn}{\deg(P)}} \left(\xi_r^{ti}u\right)^{\deg(Q)}\right)^{-1} \widetilde{\mathcal{K}}_{j;k}(u;P)
\end{align*}
where the product is over all monic prime polynomials and $\widetilde{\mathcal{K}}_{j;k}(u;P) = \prod_Q \left(1+O(u^{2\deg(Q)})\right)$ and so absolutely converges for $|u|<q^{-1/2}$. Further, since the first infinite product is of the form $\prod_Q \left(1+O(u^{\deg(Q)})\right)$, we get that $\mathcal{K}_{j;k}(u;P)$ absolutely converges for $|u|<q^{-1}$.

Now, we see that
$$\prod_Q \left(1 - \left(\frac{Q}{P}\right)_{(r,n)}^{\frac{ijn}{\deg(P)}} \left(\xi_r^{ti}u\right)^{\deg(Q)}\right)^{-1} = L\left(\xi_r^{ti}u, \left(\frac{\cdot}{P}\right)_{(r,n)}^{\frac{ijn}{\deg(P)}}\right)$$
is just the $L$-function of the Dirichlet character $\left(\frac{\cdot}{P}\right)_{(r,n)}^{\frac{ijn}{\deg(P)}}$. Further, since we always have $(r,n)\nmid \frac{jn}{\deg(P)}$ and $(i,r)=1$, this will be a non-trivial Dirichlet character. Therefore, this infinite product can be extended to an entire function and hence $\mathcal{K}_{j;k}(u;P)$ can be analytically extended to the region $|u|<q^{-1/2}$.
\end{proof}

\begin{cor}
$$S_{j;k}(d;P) \ll q^{(\frac{1}{2}+\epsilon)d}$$
\end{cor}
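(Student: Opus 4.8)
The plan is to read off $S_{j;k}(d;P)$ as a Taylor coefficient of the generating series $\mathcal{K}_{j;k}(u;P)$ and to estimate that coefficient by a contour integral, exactly as in the proof of Lemma \ref{aymptlem}. By definition $S_{j;k}(d;P)=[u^d]\mathcal{K}_{j;k}(u;P)$, and the previous lemma tells us that $\mathcal{K}_{j;k}(u;P)$, although initially defined only on $|u|<q^{-1}$, extends holomorphically to the whole disc $|u|<q^{-1/2}$. Hence, by Cauchy's integral formula for Taylor coefficients applied on the circle $\Gamma=\{\,|u|=q^{-1/2-\epsilon}\,\}$ (which lies inside the domain of holomorphy, so no genuine ``shift of contour'' past a pole is needed),
$$S_{j;k}(d;P)=\frac{1}{2\pi i}\oint_{\Gamma}\frac{\mathcal{K}_{j;k}(u;P)}{u^{d+1}}\,du .$$

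The second step is to bound $\mathcal{K}_{j;k}(u;P)$ on $\Gamma$. From the factorisation obtained in the previous lemma,
$$\mathcal{K}_{j;k}(u;P)=\frac{1}{r}\sum_{t=0}^{r-1}\xi_r^{-tk}\prod_{(i,r)=1}L\!\left(\xi_r^{ti}u,\left(\tfrac{\cdot}{P}\right)_{(r,n)}^{ijn/\deg(P)}\right)\widetilde{\mathcal{K}}_{j;k}(u;P),$$
where each $L$-factor is the $L$-function of a non-trivial Dirichlet character modulo $P$, hence a polynomial, and is therefore bounded on the compact circle $\Gamma$, while $\widetilde{\mathcal{K}}_{j;k}(u;P)=\prod_Q(1+O(u^{2\deg Q}))$ converges absolutely and uniformly on $\Gamma$ precisely because $q^{-1/2-\epsilon}<q^{-1/2}$. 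Consequently $\mathcal{K}_{j;k}(u;P)=O(1)$ on $\Gamma$, with an implied constant depending only on $r,q,\epsilon$ and $P$. Feeding this into the integral above, using the length $2\pi q^{-1/2-\epsilon}$ of $\Gamma$ and the fact that $|u|^{-(d+1)}=q^{(1/2+\epsilon)(d+1)}$ on $\Gamma$, gives
$$|S_{j;k}(d;P)|\le q^{-1/2-\epsilon}\Big(\max_{\Gamma}|\mathcal{K}_{j;k}(u;P)|\Big)q^{(1/2+\epsilon)(d+1)}\ll q^{(1/2+\epsilon)d},$$
which is the assertion.

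There is essentially no remaining obstacle: the substantive content — locating the singularities of $\mathcal{K}_{j;k}(u;P)$ via the Dirichlet $L$-function factorisation and invoking the fact that $L$-functions of non-trivial characters are entire — was already carried out in the preceding lemma, and this corollary is just the routine coefficient extraction. The only point needing a word of care is that the implied constant above depends on $P$ through the degrees of the $L$-factors; should a bound uniform in $\deg(P)$ be required later (as it will be in the estimation of $ET_r(g,n)$), one makes that dependence explicit by using the Riemann Hypothesis for function fields (Weil) to bound $|L(\xi_r^{ti}u,\chi)|$ on $\Gamma$, which is the only place any additional bookkeeping enters.
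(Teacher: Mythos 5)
Your proposal is correct and is essentially the paper's own argument: both express $S_{j;k}(d;P)$ as the $d$-th Taylor coefficient of $\mathcal{K}_{j;k}(u;P)$, use the analytic continuation to $|u|<q^{-1/2}$ established in the preceding lemma, and apply Cauchy's coefficient formula on the circle $|u|=q^{-1/2-\epsilon}$ with a trivial bound on the integrand. Your additional remark about the implied constant's dependence on $P$ (via the degrees of the $L$-factors) is a reasonable point of care that the paper leaves implicit, but it does not change the proof of the stated bound.
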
\label{ETcor}

\begin{proof}

Let $\Gamma = \{u : |u|= q^{-1/2-\epsilon}\}$. Then $\mathcal{K}_{j;k}(u;P)$ can be analytically extended to the interior of $\Gamma$ and so
$$S_{j;k}(d;P) = \frac{1}{2\pi i} \oint_{\Gamma} \frac{\mathcal{K}_{j;k}(u;P)}{u^{d+1}}du \ll q^{(\frac{1}{2}+\epsilon)d}$$

\end{proof}

\begin{proof}[proof of Theorem \ref{ETthm}]

By Corollary \ref{ETcor}, we get that
\begin{align*}
ET_r(g,n) & =  \frac{-rq^{-n/2}}{|\F^{thin}_r(g)|} \sum_{\deg(P)|n} \deg(P) \sum_{\substack{j=1 \\(r,n)\nmid \frac{jn}{\deg(P)}}}^{(r,n)-1} \left(S_{j;0}(d;P) + \sum_{(k,r)=1} S_{j;k}(d-1;P) \right) \\
& \ll \frac{q^{-n/2}}{|\F^{thin}_r(g)|} \sum_{\deg(P)|n} \deg(P) \sum_{\substack{j=1 \\(r,n)\nmid \frac{jn}{\deg(P)}}}^{(r,n)-1} q^{(\frac{1}{2}+\epsilon)d}\\
& \ll \frac{(r,n)-1}{q^{\frac{n}{2} + (\frac{1}{2}-\epsilon)d}} \sum_{\deg(P)|n}\deg(P) = \frac{((r,n)-1)q^{n/2}}{q^{(\frac{1}{2}-\epsilon)d}}
\end{align*}

\end{proof}

\section{Proof of Main Results}

\subsection{Proof of Theorem \ref{thinthm} and Corollary \ref{thincor}}

\begin{proof}[Proof of Theorem \ref{thinthm}]

By the definition of $MT_r(g,n)$ and $ET_r(g,n)$, we get
\begin{align*}
\langle \Tr(\Theta_C^n) \rangle_{\Hh^{thin}_r(g)} & = MT_r(g,n) + ET_r(g,n) \\
 & = \frac{-1}{q^{n/2}}\sum_{\substack{s|(r,n) \\ s\not=1}}\phi(s)q^{n/s} -D_r(g,n)+ O\left(\frac{1}{gq^{\frac{n}{2}}}+q^{\frac{n}{2}-(\frac{1}{2}+\epsilon)\frac{2g}{r-1}}\right)
\end{align*}
where the second line follows from Theorems \ref{maintermthm} and \ref{ETthm}. Moreover, if $(r,n)=1$, then both $MT_r(g,n)=ET_r(g,n)=0$ and so $\langle \Tr(\Theta_C^n) \rangle_{\Hh^{thin}_r(g)}=0$ as well.

\end{proof}

\begin{proof}[Proof of Corollary \ref{thincor}]

Applying Poisson summation one can show that for any unitary matrix $U$ and any even Schwartz test function, we have
$$\mathcal{D}(U,f) = \frac{1}{2g}\hat{f}(0) + \frac{1}{g}\sum_{n=1}^{\infty} \hat{f}\left(\frac{n}{2g}\right) \Tr(U^n). $$

Now, since $D_r(g,n) \ll nq^{-n/2}$ then as long as $C_r\log_q(g) \leq n \leq (1-\epsilon)\frac{2g}{r-1}$, we can use Theorem \ref{thinthm} to write
\begin{align*}
\langle \Tr(\Theta_C^n) \rangle_{\Hh^{thin}_r(g)} & = \begin{cases} -1 & \mbox{ $r$ and $n$ even} \\ 0 & \mbox{ otherwise} \end{cases} + O\left(\frac{1}{g}\right)\\
& = \begin{cases} \int_{USp(2g)} Tr(U^n) dU & r \mbox{ even} \\ \int_{U(2g)} Tr(U^n) dU & r \mbox{odd} \end{cases} + O\left(\frac{1}{g}\right)
\end{align*}
Suppose $r$ is even. Then, if we assume $\supp(\hat{f}) \subset \left(-\alpha,\alpha\right)$, for some $\alpha<\frac{1}{r-1}$  we get
\begin{align*}
\langle \mathcal{D}(L_C,f) \rangle_{\Hh^{thin}_r(g)} & = \langle \mathcal{D}(\Theta_C,f) \rangle_{\Hh^{thin}_r(g)}\\
& = \hat{f}(0) + \frac{1}{g}\sum_{n=1}^{\infty} \hat{f}\left(\frac{n}{2g}\right) \langle \Tr(\Theta_C^n) \rangle_{\Hh^{thin}_r(g)}\\
& = \hat{f}(0) + \frac{1}{g}\sum_{n=1}^{2\alpha g} \hat{f}\left(\frac{n}{2g}\right) \int_{USp(2g)} Tr(U^n) dU + O\left(\frac{1}{g} \sum_{n=1}^{C_r\log_q(g)} \hat{f}\left(\frac{n}{2g}\right) \frac{n}{q^{n/2}} + \frac{1}{g^2}\right) \\
& = \int_{USp(2g)} \left(\hat{f}(0) + \frac{1}{g}\sum_{n=1}^{\infty} \hat{f}\left(\frac{n}{2g}\right)  Tr(U^n) \right)dU + O\left(\frac{1}{g^{1-\epsilon}}\right) \\
& = \int_{USp(2g)} \mathcal{D}(U,f) dU + O\left(\frac{1}{g^{1-\epsilon}}\right).
\end{align*}

The same argument works for $r$ odd with $USp(2g)$ replaced with $U(2g)$.

\end{proof}

\subsection{Proof of Theorem \ref{refinedthm}}

\begin{proof}[Proof of Theorem \ref{refinedthm}]
For any $\theta,s$ let
$$h(x) := f\left( 2g \left(\frac{\theta +i\log(q)\left(\frac{1}{s}-\frac{1}{2}\right)}{2\pi}-x \right)\right).$$
Then
$$\hat{h}(\xi) = \frac{q^{\frac{\xi}{s}}}{q^{\frac{\xi}{2}}} \frac{e^{i \theta\xi}}{2g}\hat{f}\left(\frac{\xi}{2g}\right).$$
Now, applying Poisson summation, and using the restriction of the support of $\hat{f}$, we get
\begin{align*}
\sum_{n\in\mathbb{Z}}f \left( 2g \left(\frac{\theta +i\log(q)\left(\frac{1}{s}-\frac{1}{2}\right)}{2\pi}-n \right)\right) & = \sum_{n\in\mathbb{Z}} h(n) = \sum_{n\in\mathbb{Z}} \hat{h}(n)  \\
& = \sum_{n\in\mathbb{Z}} \frac{q^{\frac{n}{s}}}{q^{\frac{n}{2}}} \frac{e^{i \theta n}}{2g}\hat{f}\left(\frac{n}{2g}\right) \\
& = \sum_{n=0}^{\frac{2g}{r-1}} \frac{q^{\frac{n}{s}}}{q^{\frac{n}{2}}} \frac{e^{i \theta n}}{2g}\hat{f}\left(\frac{n}{2g}\right)
\end{align*}
Therefore, for $U\in M_r(2g)$ with eigenangles $\theta_1,\dots,\theta_{2g}$,
\begin{align*}
\mathcal{D}_q(U,f) & = \sum_{\substack{s|r\\s\not=1}} \sum_{j=1}^{\frac{2g}{r-1}} \sum_{(k,s)=1} \sum_{n\in\Z} f\left( 2g \left(\frac{\theta_{\frac{r}{s}jk} +i\log(q)\left(\frac{1}{s}-\frac{1}{2}\right)}{2\pi}-n \right)\right) \\
& = \sum_{\substack{s|r\\s\not=1}} \sum_{j=1}^{\frac{2g}{r-1}} \sum_{(k,s)=1} \sum_{n=0}^{\frac{2g}{r-1}} \frac{q^{\frac{n}{s}}}{q^{\frac{n}{2}}} \frac{e^{i \theta_{\frac{r}{s}jk} n}}{2g}\hat{f}\left(\frac{n}{2g}\right) \\
& = \hat{f}(0) + \frac{1}{2g} \sum_{\substack{s|r\\s\not=1}} \sum_{n=1}^{\frac{2g}{r-1}}\frac{q^{\frac{n}{s}}}{q^{\frac{n}{2}}}\hat{f}\left(\frac{n}{2g}\right) \sum_{j=1}^{\frac{2g}{r-1}} \sum_{(k,s)=1} e^{i \theta_{\frac{r}{s}jk} n} \\
& = \hat{f}(0) + \frac{1}{2g} \sum_{\substack{s|r\\s\not=1}} \sum_{n=1}^{\frac{2g}{r-1}}\frac{q^{\frac{n}{s}}}{q^{\frac{n}{2}}}\hat{f}\left(\frac{n}{2g}\right) \Tr(U_s^n)
\end{align*}
where we recall that $U_s$ is defined to be $U$, restricted to the vector space with basis $\{e_{\frac{r}{s}jk}: j=1,\dots,\frac{2g}{r-1}, (k,s)=1 \}$, where the $e_i$ are the standard basis vectors. In particular, as $U$ runs over all $M_r(2g)$, then $U_s$ runs over all $M_{(s)}\left(\frac{\phi(s)2g}{r-1}\right)$. Hence,
\begin{align*}
\int_{M_r(2g)} \mathcal{D}_q(U,f) dU & = \hat{f}(0) + \frac{1}{2g} \sum_{\substack{s|r\\s\not=1}} \sum_{n=1}^{\frac{2g}{r-1}}\frac{q^{\frac{n}{s}}}{q^{\frac{n}{2}}}\hat{f}\left(\frac{n}{2g}\right) \int_{M_{(s)}\left(\frac{\phi(s)2g}{r-1}\right)} \Tr(U_s^n) dU \\
& = \hat{f}(0) - \frac{1}{2g} \sum_{\substack{s|r\\s\not=1}} \phi(s) \sum_{n=1}^{\frac{2g}{s(r-1)}}q^{n(1-\frac{s}{2})}\hat{f}\left(\frac{ns}{2g}\right)
\end{align*}

Combining everything, we then get
\begin{align*}
\langle \mathcal{D}(L_C,f) \rangle_{\Hh_r^{thin}(g)} & = \hat{f}(0) + \frac{1}{2g}\sum_{n=1}^{\frac{2g}{r-1}} \hat{f}\left(\frac{n}{2g}\right) \langle \Tr(\Theta_C^n) \rangle_{\Hh_r^{thin}(g)} \\
& = \hat{f}(0) - \frac{1}{2g} \sum_{n=1}^{\frac{2g}{r-1}} \hat{f}\left(\frac{n}{2g}\right)\left( \sum_{\substack{s|(n,r) \\ s\not=1 }} \phi(s)\frac{q^{n/s}}{q^{n/2}} + D_r(g,n)\right) + O\left(\frac{1}{g^2}\right) \\
& = \hat{f}(0) - \frac{1}{2g} \sum_{\substack{s|r \\ s\not=1}} \phi(s) \sum_{n=1}^{\frac{2g}{s(r-1)}} q^{n(1-\frac{s}{2})} \hat{f}\left(\frac{ns}{2g}\right) - \frac{1}{2g} \sum_{n=1}^{\frac{2g}{r-1}} \hat{f}\left(\frac{n}{2g}\right)D_r(g,n) + O\left(\frac{1}{g^2}\right) \\
& = \int_{M_r(2g)} \mathcal{D}_q(U,f) dU - \frac{1}{2g}\sum_{n=1}^{\frac{2g}{r-1}} \hat{f}\left(\frac{n}{2g}\right)D_r(g,n) + O\left(\frac{1}{g^2}\right)
\end{align*}

Finally, it remains to determine the contribution from $D_r(g,n)$. Recalling the definition of $D_r(g,n)$ from \eqref{D_r(g,n)}, we get that the contribution to the above will be
\begin{align*}
& \sum_{n=1}^{\frac{2g}{r-1}} \hat{f}\left(\frac{n}{2g}\right) \sum_{\deg(P)|n}\frac{\deg(P)}{q^{n/2}} \left(\left(r,\frac{n}{\deg(P)}\right)-1\right) \sum_{a=1}^{\lfloor \frac{d}{\deg(P)}\rfloor} \left(\frac{-\phi(r)}{q^{\deg(P)}}\right)^a \left(1-\frac{a\deg(P)}{d}\right)^{\phi(r)-1}\\
= & \sum_{m=1}^{\frac{2g}{r-1}} m \sum_{\deg(P)=m}   \sum_{a=1}^{\lfloor \frac{d}{m}\rfloor} \left(\frac{-\phi(r)}{q^{m}}\right)^a \left(1-\frac{am}{d}\right)^{\phi(r)-1} \sum_{k=1}^{\frac{2g}{m(r-1)}} \hat{f}\left(\frac{mk}{2g}\right) \frac{(r,k)-1}{q^{mk/2}}.
\end{align*}

Now, we have $\hat{f}(\frac{mk}{2g}) = \hat{f}(0) + O\left(\frac{mk}{2g}\right)$, and so
\begin{align*}
\sum_{k=1}^{\frac{2g}{m(r-1)}} \hat{f}\left(\frac{mk}{2g}\right) \frac{(r,k)-1}{q^{mk/2}} & = \hat{f}(0)\sum_{k=1}^{\frac{2g}{m(r-1)}} \frac{(r,k)-1}{q^{mk/2}} + O\left(\frac{m}{2g} \sum_{k=1}^{\frac{2g}{m(r-1)}}k \frac{(r,k)-1}{q^{mk/2}}\right) \\
& = \hat{f}(0) \sum_{\substack{s|r \\ s\not=1}} \phi(s)  \sum_{k=1}^{\frac{2g}{ms(r-1)}} \frac{1}{q^{msk/2}} +O\left(\frac{m}{gq^m}\right) \\
& = \hat{f}(0) \sum_{\substack{s|r \\ s\not=1}} \frac{\phi(s)}{q^{ms/2}-1} + O\left(\frac{m}{gq^m}\right)
\end{align*}

This error term contributes at most
$$\frac{1}{2g} \sum_{m=1}^\frac{2g}{r-1} \frac{m^2}{q^m} \sum_{\deg(P)=m}   \sum_{a=1}^{\lfloor \frac{d}{m}\rfloor} \left(\frac{\phi(r)}{q^{m}}\right)^a \left(1-\frac{am}{d}\right)^{\phi(r)-1} \ll \frac{1}{2g} \sum_{m=0}^\frac{2g}{r-1} \frac{m^2\pi_q(m)}{q^{2m}} \ll \frac{1}{2g}  $$

The remaining is thus
\begin{align*}
& \hat{f}(0) \sum_{\substack{s|r \\ s\not=1}}  \phi(s) \sum_{m=0}^{\frac{2g}{r-1}} \frac{m}{q^{ms/2}-1} \sum_{\deg(P)=m}   \sum_{a=1}^{\lfloor \frac{d}{m}\rfloor} \left(\frac{-\phi(r)}{q^{m}}\right)^a \left(1-\frac{am}{d}\right)^{\phi(r)-1}\\
= & \hat{f}(0)\sum_{\substack{s|r \\ s\not=1}} \phi(s) \sum_{P} \frac{\deg(P)}{|P|^{s/2}-1} \sum_{a=1}^{\lfloor \frac{d}{\deg(P)}\rfloor} \left(\frac{-\phi(r)}{|P|}\right)^a \left(1-\frac{a\deg(P)}{d}\right)^{\phi(r)-1} + O\left(\frac{1}{g}\right)
\end{align*}

\end{proof}

\bibliography{CyclicFirstDraft}
\bibliographystyle{amsplain}

\end{document}